\newcommand\bk{{\mathbb K}}
\newcommand\bp{{\mathbb P}}
\newcommand\bs{\mathbb{S}}
\newcommand\bfff{{\mathbb F}}
\newcommand\br{{\mathbb R}}
\newcommand\bc{{\mathbb C}}
\newcommand\bq{{\mathbb Q}}
\newcommand\bn{{\mathbb N}}
\newcommand\bz{{\mathbb Z}}
\newcommand\bgg{{\mathbb G}}
\newcommand\bb{{\mathbb B}}
\newcommand\ca{{\mathcal A}}
\newcommand\calp{{\mathcal P}}
\newcommand\calF{{\mathcal F}}
\newcommand\calm{{\mathcal M}}
\newcommand\cac{{\mathcal C}}
\newcommand\scc{{\mathscr C}}
\newcommand\scl{{\mathscr L}}
\newcommand\scm{{\mathscr M}}
\newcommand\sch{{\mathscr H}}
\newcommand\scb{{\mathscr B}}
\newcommand\scg{{\mathscr G}}
\newcommand\scp{{\mathscr P}}
\newcommand\scf{{\mathscr F}}
\newcommand\sca{{\mathscr A}}
\newcommand\scrr{{\mathscr R}}
\newcommand\gs{\mathbf{S}}
\newcommand\gn{\mathbf{N}}
\newcommand\codim{{\text{codim}}}
\newcommand\pref[1]{(\ref{#1})}
\DeclareMathOperator\gr{\text{gr}}
\DeclareMathOperator\coker{coker}
\DeclareMathOperator\Hom{\text{Hom}}
\DeclareMathOperator\im{\text{Im}}
\DeclareMathOperator\Aut{\text{Aut}}
\DeclareMathOperator\GL{\text{GL}}
\DeclareMathOperator\PGL{\text{PGL}}
\DeclareMathOperator\ML{\text{\sc ML}}
\DeclareMathOperator\RYB{\text{\sc RYB}}
\DeclareMathOperator\Stab{\text{Stab}}
\DeclareMathOperator\Adm{\text{Adm}}
\DeclareMathOperator\Hh{H}
\newtheorem{thm}{Theorem}[section]
\newtheorem{prop}[thm]{Proposition}
\newtheorem{cor}[thm]{Corolary}
\newtheorem{lema}[thm]{Lemma}
\theoremstyle{remark}
\newtheorem{obs}[thm]{Remark}
\theoremstyle{definition}
\newtheorem{dfn}[thm]{Definition}
\newtheorem{ejm}[thm]{Example}
\newtheorem{cvt}[thm]{Convention}
\numberwithin{equation}{section}
\numberwithin{figure}{section}
\renewcommand\p@figure{\thesection.\arabic{figure}\expandafter\@gobble}
\renewcommand\p@subfigure{\thefigure(\alph{subfigure})\expandafter\@gobble}
\title{Heegaard splittings of graph manifolds.}
\author[E. Artal]{Enrique Artal Bartolo}
\address{Departamento de Matem\'aticas-IUMA\\
Universidad de Zaragoza\\
Campus Plaza San Francisco s/n\\
E-50009 Zaragoza SPAIN}
\email{artal@unizar.es%,tlozano@unizar.es
}
\author[S.~Isaza]{Sim{\'o}n Isaza Pe{\~n}aloza}
\address{Departamento de {\'A}lgebra\\
Universidad Complutense\\
Plaza de las Ciencias, n. 3\\
E-28040 Madrid SPAIN}
\email{psisaza@mat.ucm.es}
\author[M.~Marco]{Miguel A. Marco-Buzun{\'a}riz}
\address{Departamento de Matem\'aticas-IUMA\\
Universidad de Zaragoza\\
Campus Plaza San Francisco s/n\\
E-50009 Zaragoza SPAIN}
\email{mmarco@unizar.es}
\newcommand\enet[1]{\renewcommand\theenumi{#1}
\renewcommand\labelenumi{\theenumi}}
\begin{document}
\begin{abstract}
In this paper we give a method to construct Heegaard splittings
of oriented graph manifolds with orientable bases.
A graph manifold is a closed $3$-manifold admitting only Seifert-fibered pieces
in its Jaco-Shalen decomposition; for technical reasons,
we restrict our attention to the \emph{fully}
oriented case, i.e. both the pieces and the bases are oriented.
\end{abstract}
\maketitle

In this paper we deal with graph manifolds. A closed $3$-manifold~$M$
is said to be a graph manifolds if its Jaco-Shalen decomposition
admits only Seifert-fibered pieces. These manifolds were classified
by F.~Waldhausen~\cite{wal:67,wal:67a} and they are completely
determined by a normalized weighted graph (up to a controlled family of exceptions).
For technical reasons  we restrict our attention to the \emph{fully}
oriented case, i.e. we assume $M$~oriented and we also assume
that the bases of the Seifert fibrations are oriented surfaces. This is only
a mild restriction and this family contains the class of $3$-manifolds
which appear naturally in complex geometry: boundary of regular neighbourhoods of complex curves
in complex surfaces, and, in particular links of normal surface complex singularities. 
These manifolds admit another classification in terms
of plumbing graphs, see the work of W.~Neumann~\cite{neu:81}.

A \emph{Heegaard splitting} of a closed orientable $3$-manifold $M$ is a decomposition of~$M$
as a union of two handle bodies sharing a common boundary. This common
boundary is a closed orientable surface $\Sigma$. The genus of the splitting
is defined as the genus $g$ of $\Sigma$. Note that, if we see $\Sigma$ as
the boundary of a handle body, there are $g$ distinguished curves in it, that
correspond to the boundaries of $g$ disks such that, cutting along them, a
closed ball is obtained. In a Heegaard splitting, the same surface is seen
as the boundary of two different handle bodies, so there are two families of
distinguished curves. These two families of curves are enough to determine the
two handle bodies, and hence they also determine the manifold $M$ and the splitting
itself. An oriented closed surface of genus $g$, with two families of $g$ curves
is called a \emph{Heegaard diagram}, which represents a Heegaard splitting. Every closed oriented $3$-manifold admits
a Heegaard splitting~\cite{hgd}, and \cite{rolf} for details. The \emph{Heegaard genus} of such a manifold
is the minimal genus of the Heegaard splittings of~$M$.

There are a lot of works about Heegaard splittings of Seifert fibered manifolds
(the \emph{bricks} of graph manifolds), see e.g. \cite{BZ:84,BOt:91,MorSchu:98}.
In these works, \emph{vertical} and \emph{horizontal} splittings are defined;
our approach will make use of horizontal splittings. These ideas were also transferred
to the case of graph manifolds in~\cite{Schu:04}, where 
the structure of Heegaard splittings is studied. 

The contribution of this work is to provide an explicit method to construct  Heegaard splittings of a graph manifold
from its plumbing graph, namely, we give a closed oriented surface with two
systems of cutting curves. Recall from \cite{neu:81} that some moves are allowed for plumbing
graphs that provide the same manifold; we can use these moves to decrease the genus of the
provided Heegaard splitting even though, in general, our method does not provide a minimal splitting.

Osv\'ath and Szab\'o~\cite{OzSz:04a,OzSz:04b} defined a Floer homology for $3$-manifolds using Heegaard
diagrams (the so-called Heegaard-Floer homology). Since then, Heegaard splittings
have regained interest, specially when having combinatorial methods for its computation 
from a Heegaard diagram, see Sarkar and Wang in~\cite{sarkar-wang}.  
An interesting particular case is its application to the study of normal surface singularity invariants,
specially those whose links are rational homology spheres, as in the series of works of
N\'emethi \emph{et al.}~\cite{LaNe:15,BoNe,Ne:17}.

The paper is organized as follows. We start in \S\ref{sec:introduction} with an example on how to associate to a graph  manifold 
a Heegaard splitting. No proof is given at this time, but the main steps of the construction are illustrated.
In \S\ref{sec:graph-manifold}, we recall the construction of a graph manifold from its plumbing graph for further use. In \S\ref{sec:TopConst}, the main topological constructions which are needed 
for the Heegaard splittings are given, specially the  concept of \emph{float gluings}.
The case of $\mathbb{S}^1$-fiber bundles with Euler number $\pm 1$ is the next goal:
in \S\ref{sec-euler1} the splitting is constructed while in \S\ref{sec:diagramae1}
the Heegaard diagram is described. We follow the same structure
for general $\mathbb{S}^1$-fiber bundles in~\S\ref{sec-eulern}. In~\S\ref{sec:2vertices}, we study the splittings
of the simplest graph manifolds which are not fibered bundles, i.e., corresponding to a simplicial graph with one edge.
The general case is studied in~\S\ref{sec:arbitrary}. This escalonated procedure allows us to split the technical difficulties. Finally, in \S\ref{sec:ejm} we provide explicit examples, including a genus~$3$ splitting of
Poincar{\'e} sphere (link of the $\mathbb{E}_8$ singularity).

\section{illustrative example}\label{sec:introduction}

The goal of this paper is to describe an explicit Heegaard splitting of a
graph manifold. It is presented in the form of a method, that we will now
summarize by describing a surface with two systems of curves
starting from of a decorated graph. We illustrate this with a suitable
example.

\begin{figure}[ht]
\begin{center}
\begin{tikzpicture}[scale=2,
vertice/.style={draw,circle,fill,minimum size=0.2cm,inner
sep=0}
]
\draw (-1,0) --(1,0) to[out=90,in=90] (-1,0)  node[above,pos=.25] {$+$} node[above,pos=.75] {$-$} node[above=30pt,pos=.5] {$+$};
\node[vertice] at (-1,0) {};
\node[vertice] at (0,0) {};
\node[vertice] at (1,0) {};
\node[below] at (0,0) {$[1],-1$};
%\node[above] at (0,0) {$e=1$};
\node[below] at (1,0) {$[0],0$};
%\node[above right] at (1,0) {$e=0$};
\node[below] at (-1,0) {$[0],2$};
%\node[above left] at (-1,0) {$e=2$};
\end{tikzpicture}
\caption{Example graph}
\label{fig:ex_graph}
\end{center}
\end{figure}
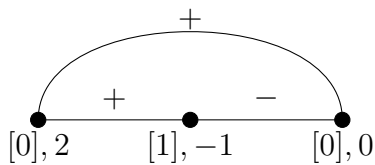

We start with a connected decorated graph. Each vertex $v$ is decorated with two numbers:
a nonnegative integer $[g_v]$ and an integer $e_v$. Each edge is decorated with
a sign.

From the graph, we will construct a surface, and two systems of curves inside of it
(refered to as the system of \emph{blue} curves and the system of \emph{red}
curves), following a process that mymics the construction of the graph from its
elements.
In this process we fix a spanning tree that determines two types of edges:
edges in the tree and edges that close cycles. In our example we fix as spanning tree the straight edges.
The steps to follow are the following:

\begin{enumerate}
\enet{(G\arabic{enumi})}

\item\label{g1} For each vertex~$v$, we consider a pair of
 closed oriented surfaces of genus $g_v$
 (called \emph{top} and \emph{bottom}) as in Figure~\ref{fig:sup-vertices}
 for the example.

\begin{figure}[ht]
\begin{center}
\begin{tikzpicture}[scale=.5]

\def\geno{
//Surface up
\draw[very thick] (2.25+1,1) to [out=10,in=270] (3+1,2.0)
to [out=90, in=0] (0+1,4.0)
to [out=180, in=90] (-3-1, 2)
to [out=270, in=170] (-2.25-1,1) to[out=-10,in=190] (2.25+1,1);}
\def\asa{
\fill[white]
(1.5,3.0)
to [out=150, in=270] (1.0, 4.0)
to [out=90, in=0.0] (0.0, 5.0)
to [out=180, in=90] (-1.0,4.0)
to [out=270, in=30] (-1.5,3.0)--
(-0.25,3.0)
to [out=150, in=270] (-0.5,4.0)
to [out=90, in=180] (0,4.5)
to [out=0, in=90] (0.5,4.0)
to [out=270, in=30] (0.25, 3.0)
;

\draw[very thick] (1.5,3.0)
to [out=150, in=270] (1.0, 4.0)
to [out=90, in=0.0] (0.0, 5.0)
to [out=180, in=90] (-1.0,4.0)
to [out=270, in=30] (-1.5,3.0);

\draw[very thick] (0.25,3.0)
to [out=30, in=270] (0.5,4.0)
to [out=90, in=0] (0,4.5)
to [out=180, in=90] (-0.5,4.0)
to [out=270, in=150] (-0.25, 3.0);
;}
%\draw[very thick] (-2.25-1,1) -- (2.25+1,1);

\geno
\asa

\begin{scope}[yscale=-1]
\geno
\asa
\end{scope}

\begin{scope}[xshift=-10cm]
\geno
\end{scope}

\begin{scope}[xshift=-10cm,yscale=-1]
\geno
\end{scope}

\begin{scope}[xshift=10cm]
\geno
\end{scope}

\begin{scope}[xshift=10cm,yscale=-1]
\geno
\end{scope}

\end{tikzpicture}
\caption{}
\label{fig:sup-vertices}
\end{center}
\end{figure}

 \item\label{g2} We join the surfaces of each pair by some cylinders, see
 Figure~\ref{fig:sup-tubos}. To each one of these
cylinders it will be assigned a sign, satisfying the condition that the sum of
these signs in each pair of surfaces matches the number $e_v$. The number of
these cylinders can be chosen freely, as long as the previous condition holds,
and there are enough of them to perform the rest of the steps in the algorithm.
Besides, one of the cylinders in each pair of surfaces is chosen as a
\emph{main cylinder} (larger in Figure~\ref{fig:sup-tubos}).

\begin{figure}[ht]
\begin{center}
\begin{tikzpicture}[scale=.5]

\def\geno{
    //Surface up
    \draw[very thick] (2.25+1,1) to [out=10,in=270] (3+1,2.0)
    to [out=90, in=0] (0+1,4.0)
    to [out=180, in=90] (-3-1, 2)
    to [out=270, in=170] (-2.25-1,1) to[out=-10,in=190] (2.25+1,1);}
\def\asa{
    \fill[white]
    (1.5,3.0)
    to [out=150, in=270] (1.0, 4.0)
    to [out=90, in=0.0] (0.0, 5.0)
    to [out=180, in=90] (-1.0,4.0)
    to [out=270, in=30] (-1.5,3.0)--
    (-0.25,3.0)
    to [out=150, in=270] (-0.5,4.0)
    to [out=90, in=180] (0,4.5)
    to [out=0, in=90] (0.5,4.0)
    to [out=270, in=30] (0.25, 3.0)
    ;

    \draw[very thick] (1.5,3.0)
    to [out=150, in=270] (1.0, 4.0)
    to [out=90, in=0.0] (0.0, 5.0)
    to [out=180, in=90] (-1.0,4.0)
    to [out=270, in=30] (-1.5,3.0);

    \draw[very thick] (0.25,3.0)
    to [out=30, in=270] (0.5,4.0)
    to [out=90, in=0] (0,4.5)
    to [out=180, in=90] (-0.5,4.0)
    to [out=270, in=150] (-0.25, 3.0);
    ;}

\def\tubo{
    \fill[white] (-0.75,1.25) -- (-0.5,1)
    to [out=300, in=90] (-0.3,0)
    to [out=270, in = 60] (-0.5,-1)
    -- (-0.75,-1.25)-- (0.75,-1.25) -- (0.5,-1)
    to [out=-240, in=-90] (0.3,0)
    to [out=-270, in = -120] (0.5,1)
    -- (0.75,1.25)-- (-0.75,1.25);

    \draw[ very thick] (-0.75,1.25) -- (-0.5,1)
    to [out=300, in=90] (-0.3,0)
    to [out=270, in = 60] (-0.5,-1)
    -- (-0.75,-1.25);
    \draw[very thick] (0.75,1.25) -- (0.5,1)
    to [out=240, in=90] (0.3,0)
    to [out=270, in = 120] (0.5,-1)
    -- (0.75,-1.25);
    }

\geno
\asa

\begin{scope}[yscale=-1]
\geno
\asa
\end{scope}

\begin{scope}[xscale=1.25]
\tubo
\node at (0,0) {$-$};
\end{scope}
\begin{scope}[xshift=-2cm]
\tubo
\node at (0,0) {$+$};
\end{scope}

\begin{scope}[xshift=2cm]
\tubo
\node at (0,0) {$-$};
\end{scope}

\begin{scope}[xshift=-10cm]
\geno

\begin{scope}[yscale=-1]
\geno
\end{scope}

\begin{scope}[xshift=-1cm,xscale=1.25]
\tubo
\node at (0,0) {$+$};
\end{scope}

\begin{scope}[xshift=1cm]
\tubo
\node at (0,0) {$+$};
\end{scope}
\end{scope}

\begin{scope}[xshift=10cm]
\geno

\begin{scope}[yscale=-1]
\geno
\end{scope}

\begin{scope}[xshift=-1cm]
\tubo
\node at (0,0) {$-$};
\end{scope}

\begin{scope}[xshift=1cm,xscale=1.25]
\tubo
\node at (0,0) {$+$};
\end{scope}
\end{scope}

//Vertical tubes
% \draw[very thick] (-2.5-1,1.25) -- (-2.25-1,1)
% to [out=300, in=90] (-2.1-1,0)
% to [out=270, in = 60] (-2.25-1,-1)
% -- (-2.5-1,-1.25);

%
% \draw[very thick] (-1.25-1,1.25) -- (-1.5-1,1)
% to [out=240, in=90] (-1.65-1,0)
% to [out=270, in=120] (-1.5-1,-1)
% -- (-1.25-1,-1.25);

\end{tikzpicture}
\caption{}
\label{fig:sup-tubos}
\end{center}
\end{figure}

\item\label{g3} For each handle in a surface, see Figure~\ref{fig:handle-red}, we add a red curve that turns around the
handle meridian, passes to the other surface in the pair through the main
cylinder, follows the same path in the other surface (reversing direction) and
returns back to the starting point traversing again the main cylinder (without
self intersections). Another red curve is constructed
in the same way but following the handle longitudes instead
of the meridians.

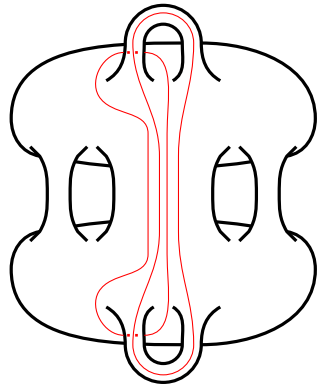
\begin{figure}[ht]
\begin{center}
\begin{tikzpicture}[scale=0.5]

\def\geno{

    \draw[very thick] (2.25+1,1) to [out=10,in=270] (3+1,2.0)
    to [out=90, in=0] (0+1,4.0)
    to [out=180, in=90] (-3-1, 2)
    to [out=270, in=170] (-2.25-1,1) to[out=-10,in=190] (2.25+1,1);}
\def\asa{
    \fill[white]
    (1.5,3.0)
    to [out=150, in=270] (1.0, 4.0)
    to [out=90, in=0.0] (0.0, 5.0)
    to [out=180, in=90] (-1.0,4.0)
    to [out=270, in=30] (-1.5,3.0)--
    (-0.25,3.0)
    to [out=150, in=270] (-0.5,4.0)
    to [out=90, in=180] (0,4.5)
    to [out=0, in=90] (0.5,4.0)
    to [out=270, in=30] (0.25, 3.0)
    ;

    \draw[very thick] (1.5,3.0)
    to [out=150, in=270] (1.0, 4.0)
    to [out=90, in=0.0] (0.0, 5.0)
    to [out=180, in=90] (-1.0,4.0)
    to [out=270, in=30] (-1.5,3.0);

    \draw[very thick] (0.25,3.0)
    to [out=30, in=270] (0.5,4.0)
    to [out=90, in=0] (0,4.5)
    to [out=180, in=90] (-0.5,4.0)
    to [out=270, in=150] (-0.25, 3.0);
    ;}

\def\tubo{
    \fill[white] (-0.75,1.25) -- (-0.5,1)
    to [out=300, in=90] (-0.3,0)
    to [out=270, in = 60] (-0.5,-1)
    -- (-0.75,-1.25)-- (0.75,-1.25) -- (0.5,-1)
    to [out=-240, in=-90] (0.3,0)
    to [out=-270, in = -120] (0.5,1)
    -- (0.75,1.25)-- (-0.75,1.25);

    \draw[ very thick] (-0.75,1.25) -- (-0.5,1)
    to [out=300, in=90] (-0.3,0)
    to [out=270, in = 60] (-0.5,-1)
    -- (-0.75,-1.25);
    \draw[very thick] (0.75,1.25) -- (0.5,1)
    to [out=240, in=90] (0.3,0)
    to [out=270, in = 120] (0.5,-1)
    -- (0.75,-1.25);
    }

\def\tubogordo{
    \fill[white] (-1.75,1.25) -- (-1.5,1)
    to [out=300, in=90] (-1.3,0)
    to [out=270, in = 60] (-1.5,-1)
    -- (-1.75,-1.25)-- (1.75,-1.25) -- (1.5,-1)
    to [out=-240, in=-90] (1.3,0)
    to [out=-270, in = -120] (1.5,1)
    -- (1.75,1.25)-- (-1.75,1.25);

    \draw[ very thick] (-1.75,1.25) -- (-1.5,1)
    to [out=300, in=90] (-1.3,0)
    to [out=270, in = 60] (-1.5,-1)
    -- (-1.75,-1.25);
    \draw[very thick] (1.75,1.25) -- (1.5,1)
    to [out=240, in=90] (1.3,0)
    to [out=270, in = 120] (1.5,-1)
    -- (1.75,-1.25);
    }

\geno
\asa

\begin{scope}[yscale=-1]
\geno
\asa
\end{scope}

\tubogordo

\begin{scope}[xshift=-2.75cm]
\tubo
\end{scope}

\begin{scope}[xshift=2.75cm]
\tubo
\end{scope}

\draw[red] (0.0, 4.8)
to [out=180, in=90] (-0.8,4.0)
to [out=270, in=90] (-0.1, 1.0)
to [out=270, in=90] (-0.1,-1.0)
to [out=270, in=90] (-0.8, -4.0)
to [out=270, in=180] (0.0, -4.8)
to [out=0, in=270] (0.8, -4.0)
to [out=90, in=270] (0.4, -1.0)
to [out=90, in=270] (0.4, 1.0)
to [out=90, in=270] (0.8, 4.0)
to [out=90, in=0] (0,4.8);

\draw[red] (-1.04,3.75)
to [out=180, in=120] (-1.7,2.6)
to [out=300, in=90] (-0.4,1.6)
to [out=270, in=90] (-0.4,-1.6)
to [out=270, in=60] (-1.7, -2.6)
to [out=240, in=180] (-1.04, -3.75);
\draw[line width=1,red, dotted] (-0.95, -3.75)
-- (-0.55, -3.75);

\draw[red] (-0.5, -3.75)
to [out=0, in=270] (0.1,-1.6)
to [out=90, in=270] (0.1,1.6)
to [out=90, in=0.0] (-0.5,3.75);
\draw[line width=1,red, dotted] (-0.95, 3.75)
-- (-0.55, 3.75);

\end{tikzpicture}

\caption{Handle red curves of step~\ref{g3} for the surfaces of the genus~$1$ vertex.}
\label{fig:handle-red}
\end{center}
\end{figure}

\item\label{g4} For each cylinder $C$  which is not a main cylinder, we add a red curve
that goes through the main cylinder and returns through $C$.

\begin{figure}[ht]
\begin{center}
\begin{tikzpicture}[scale=0.5]

\def\geno{

    \draw[very thick] (2.25+1,1) to [out=10,in=270] (3+1,2.0)
    to [out=90, in=0] (0+1,4.0)
    to [out=180, in=90] (-3-1, 2)
    to [out=270, in=170] (-2.25-1,1) to[out=-10,in=190] (2.25+1,1);}
\def\asa{
    \fill[white]
    (1.5,3.0)
    to [out=150, in=270] (1.0, 4.0)
    to [out=90, in=0.0] (0.0, 5.0)
    to [out=180, in=90] (-1.0,4.0)
    to [out=270, in=30] (-1.5,3.0)--
    (-0.25,3.0)
    to [out=150, in=270] (-0.5,4.0)
    to [out=90, in=180] (0,4.5)
    to [out=0, in=90] (0.5,4.0)
    to [out=270, in=30] (0.25, 3.0);

    \draw[very thick] (1.5,3.0)
    to [out=150, in=270] (1.0, 4.0)
    to [out=90, in=0.0] (0.0, 5.0)
    to [out=180, in=90] (-1.0,4.0)
    to [out=270, in=30] (-1.5,3.0);

    \draw[very thick] (0.25,3.0)
    to [out=30, in=270] (0.5,4.0)
    to [out=90, in=0] (0,4.5)
    to [out=180, in=90] (-0.5,4.0)
    to [out=270, in=150] (-0.25, 3.0);
    }

\def\tubo{
    \fill[white] (-0.75,1.25) -- (-0.5,1)
    to [out=300, in=90] (-0.3,0)
    to [out=270, in = 60] (-0.5,-1)
    -- (-0.75,-1.25)-- (0.75,-1.25) -- (0.5,-1)
    to [out=-240, in=-90] (0.3,0)
    to [out=-270, in = -120] (0.5,1)
    -- (0.75,1.25)-- (-0.75,1.25);

    \draw[ very thick] (-0.75,1.25) -- (-0.5,1)
    to [out=300, in=90] (-0.3,0)
    to [out=270, in = 60] (-0.5,-1)
    -- (-0.75,-1.25);
    \draw[very thick] (0.75,1.25) -- (0.5,1)
    to [out=240, in=90] (0.3,0)
    to [out=270, in = 120] (0.5,-1)
    -- (0.75,-1.25);
    }

\def\tubogordo{
    \fill[white] (-1.75,1.25) -- (-1.5,1)
    to [out=300, in=90] (-1.3,0)
    to [out=270, in = 60] (-1.5,-1)
    -- (-1.75,-1.25)-- (1.75,-1.25) -- (1.5,-1)
    to [out=-240, in=-90] (1.3,0)
    to [out=-270, in = -120] (1.5,1)
    -- (1.75,1.25)-- (-1.75,1.25);

    \draw[ very thick] (-1.75,1.25) -- (-1.5,1)
    to [out=300, in=90] (-1.3,0)
    to [out=270, in = 60] (-1.5,-1)
    -- (-1.75,-1.25);
    \draw[very thick] (1.75,1.25) -- (1.5,1)
    to [out=240, in=90] (1.3,0)
    to [out=270, in = 120] (1.5,-1)
    -- (1.75,-1.25);
    }

\geno
\asa

\begin{scope}[yscale=-1]
\geno
\asa
\end{scope}

\tubogordo

\begin{scope}[xshift=-2.75cm]
\tubo
\end{scope}

\begin{scope}[xshift=2.75cm]
\tubo
\end{scope}

\begin{scope}[xshift=-10cm]
\geno

\begin{scope}[yscale=-1]
\geno
\end{scope}

\begin{scope}[xshift=-1cm,xscale=1.25]
\tubo

\end{scope}

\begin{scope}[xshift=1cm]
\tubo

\end{scope}
\end{scope}

\begin{scope}[xshift=10cm]
\geno

\begin{scope}[yscale=-1]
\geno
\end{scope}

\begin{scope}[xshift=-1cm]
\tubo

\end{scope}

\begin{scope}[xshift=1cm,xscale=1.25]
\tubo

\end{scope}
\end{scope}

\def\curvaroja{
    \draw[red] (0.0, 1.6)
    to [out=180, in=90] (-0.8, 0.0)
    to [out=270, in=180] (0.0, -1.6)
    to [out=0, in=270] (0.8, 0.0)
    to [out=90, in=0] (0.0, 1.6);
    }
\begin{scope}[xshift=-1.9cm]
\curvaroja
\end{scope}

\begin{scope}[xshift=1.9cm]
\curvaroja
\end{scope}

\begin{scope}[xshift=9.95cm]
\curvaroja
\end{scope}

\begin{scope}[xshift=-9.95cm]
\curvaroja
\end{scope}

\end{tikzpicture}
\caption{Red curves in step~\ref{g4}.}
\end{center}
\end{figure}
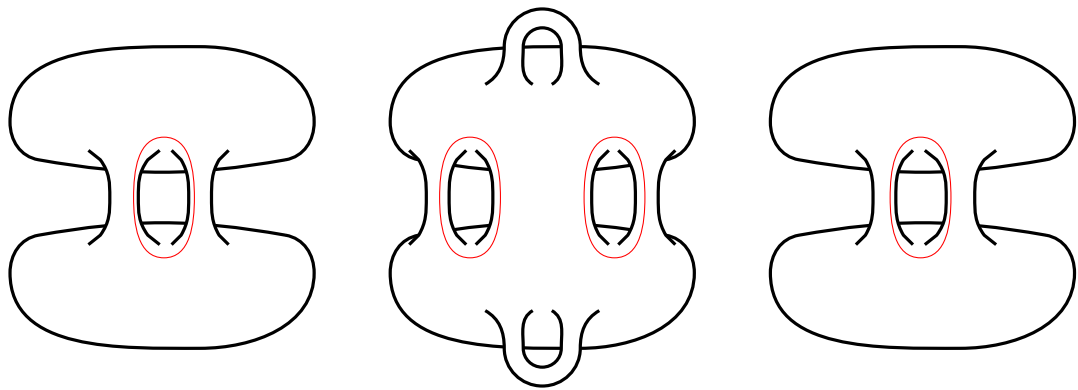

\item\label{g5} For each red curve, we add a blue curve. These blue curves are parallel to
the red curves, except for performing a Dehn twist around each cylinder they
cross. The direction of the Dehn twist is given by the sign of the cylinder.

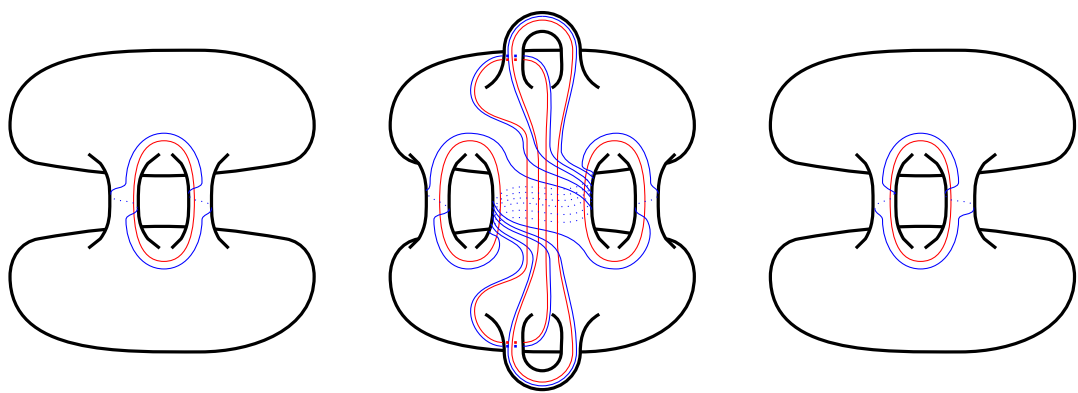
\begin{figure}[ht]
\begin{center}

\begin{tikzpicture}[scale=0.5]

\def\geno{

    \draw[very thick] (2.25+1,1) to [out=10,in=270] (3+1,2.0)
    to [out=90, in=0] (0+1,4.0)
    to [out=180, in=90] (-3-1, 2)
    to [out=270, in=170] (-2.25-1,1) to[out=-10,in=190] (2.25+1,1);}
\def\asa{
    \fill[white]
    (1.5,3.0)
    to [out=150, in=270] (1.0, 4.0)
    to [out=90, in=0.0] (0.0, 5.0)
    to [out=180, in=90] (-1.0,4.0)
    to [out=270, in=30] (-1.5,3.0)--
    (-0.25,3.0)
    to [out=150, in=270] (-0.5,4.0)
    to [out=90, in=180] (0,4.5)
    to [out=0, in=90] (0.5,4.0)
    to [out=270, in=30] (0.25, 3.0);

    \draw[very thick] (1.5,3.0)
    to [out=150, in=270] (1.0, 4.0)
    to [out=90, in=0.0] (0.0, 5.0)
    to [out=180, in=90] (-1.0,4.0)
    to [out=270, in=30] (-1.5,3.0);

    \draw[very thick] (0.25,3.0)
    to [out=30, in=270] (0.5,4.0)
    to [out=90, in=0] (0,4.5)
    to [out=180, in=90] (-0.5,4.0)
    to [out=270, in=150] (-0.25, 3.0);
    }

\def\tubo{
    \fill[white] (-0.75,1.25) -- (-0.5,1)
    to [out=300, in=90] (-0.3,0)
    to [out=270, in = 60] (-0.5,-1)
    -- (-0.75,-1.25)-- (0.75,-1.25) -- (0.5,-1)
    to [out=-240, in=-90] (0.3,0)
    to [out=-270, in = -120] (0.5,1)
    -- (0.75,1.25)-- (-0.75,1.25);

    \draw[ very thick] (-0.75,1.25) -- (-0.5,1)
    to [out=300, in=90] (-0.3,0)
    to [out=270, in = 60] (-0.5,-1)
    -- (-0.75,-1.25);
    \draw[very thick] (0.75,1.25) -- (0.5,1)
    to [out=240, in=90] (0.3,0)
    to [out=270, in = 120] (0.5,-1)
    -- (0.75,-1.25);
    }

\def\tubogordo{
    \fill[white] (-1.75,1.25) -- (-1.5,1)
    to [out=300, in=90] (-1.3,0)
    to [out=270, in = 60] (-1.5,-1)
    -- (-1.75,-1.25)-- (1.75,-1.25) -- (1.5,-1)
    to [out=-240, in=-90] (1.3,0)
    to [out=-270, in = -120] (1.5,1)
    -- (1.75,1.25)-- (-1.75,1.25);

    \draw[ very thick] (-1.75,1.25) -- (-1.5,1)
    to [out=300, in=90] (-1.3,0)
    to [out=270, in = 60] (-1.5,-1)
    -- (-1.75,-1.25);
    \draw[very thick] (1.75,1.25) -- (1.5,1)
    to [out=240, in=90] (1.3,0)
    to [out=270, in = 120] (1.5,-1)
    -- (1.75,-1.25);
    }

\geno
\asa

\begin{scope}[yscale=-1]
\geno
\asa
\end{scope}

\tubogordo

\begin{scope}[xshift=-2.75cm]
\tubo
\end{scope}

\begin{scope}[xshift=2.75cm]
\tubo
\end{scope}

\begin{scope}[xshift=-10cm]
\geno

\begin{scope}[yscale=-1]
\geno
\end{scope}

\begin{scope}[xshift=-1cm,xscale=1.25]
\tubo

\end{scope}

\begin{scope}[xshift=1cm]
\tubo

\end{scope}
\end{scope}

\begin{scope}[xshift=10cm]
\geno

\begin{scope}[yscale=-1]
\geno
\end{scope}

\begin{scope}[xshift=-1cm]
\tubo

\end{scope}

\begin{scope}[xshift=1cm,xscale=1.25]
\tubo

\end{scope}
\end{scope}

\def\curvaroja{
    \draw[red] (0.0, 1.6)
    to [out=180, in=90] (-0.8, 0.0)
    to [out=270, in=180] (0.0, -1.6)
    to [out=0, in=270] (0.8, 0.0)
    to [out=90, in=0] (0.0, 1.6);
    }
\begin{scope}[xshift=-1.9cm]
\curvaroja
\end{scope}

\begin{scope}[xshift=1.9cm]
\curvaroja
\end{scope}

\begin{scope}[xshift=9.95cm]
\curvaroja
\end{scope}

\begin{scope}[xshift=-9.95cm]
\curvaroja
\end{scope}

\begin{scope}[xshift=-9.95cm]
\draw[blue] (0.65, 0.2)
to [out=90, in=270] (1.0, 0.5)
to [out=90, in=0] (0.0, 1.8)
to [out=180, in=90] (-1.0, 0.5)
to [out=270, in=90] (-1.4, 0.2);
\draw[blue, dotted] (-1.4, 0.2)
to [out=270, in=90] (-0.7, -0.2);
\draw[blue]  (-0.7,-0.2)
to [out=270, in=90] (-1.0, -0.5)
to [out=270, in=180] (0.0, -1.8)
to [out=0, in=270] (1.0, -0.5)
to [out=90, in=270] (1.25, -0.2);
\draw[blue, dotted] (1.25, -0.2)
to [out=90, in=270] (0.65, 0.2);
\end{scope}

\begin{scope}[xshift=9.95cm]
\draw[blue] (0.65, 0.2)
to [out=90, in=270] (1.0, 0.5)
to [out=90, in=0] (0.0, 1.8)
to [out=180, in=90] (-1.0, 0.5)
to [out=270, in=0] (-0.7, 0.2);
\draw[blue, dotted] (-0.7, 0.2)
to [out=270, in=90] (-1.2, -0.2);
\draw[blue]  (-1.2,-0.2)
to [out=270, in=90] (-1.0, -0.5)
to [out=270, in=180] (0.0, -1.8)
to [out=0, in=270] (1.0, -0.5)
to [out=90, in=270] (1.4, -0.2);
\draw[blue, dotted] (1.4, -0.2)
to [out=90, in=270] (0.65, 0.2);
\end{scope}

\begin{scope}[xshift=-1.95cm]
\draw[blue] (3.25, -0.2)
to [out=120, in=290] (1., 1.2)
to [out=110, in=0] (0.0, 1.8)
to [out=180, in=90] (-1.0, 0.5)
to [out=270, in=0] (-1.1, 0.3);
\draw[blue, dotted] (3.25, -0.2)
to [out=240, in=60] (0.55, -0.8);
\draw[blue]  (-0.5,-0.2)
to [out=270, in=90] (-0.9, -0.5)
to [out=270, in=180] (0.0, -1.8)
to [out=0, in=290] (0.8, -1.0)
to [out=110, in=290] (0.55, -0.8);
\draw[blue, dotted] (-1.1, 0.3)
to [out=270, in=90] (-0.5, -0.3);
\end{scope}

\begin{scope}[xshift=1.95cm]
\draw[blue] (1.1, 0.2)
to [out=90, in=270] (0.9, 0.5)
to [out=90, in=0] (0.0, 1.8)
to [out=180, in=90] (-0.9, 0.95)
to [out=270, in=60] (-0.62, 0.7);
\draw[blue, dotted] (-0.65, 0.7)
to [out=240, in=60] (-3.2, -0.0);
\draw[blue]  (-3.25,-0.0)
to [out=290, in=120] (-0.8, -1.4)
to [out=300, in=180] (0.0, -1.8)
to [out=0, in=300] (0.8, -0.5)
to [out=120, in=270] (0.5, -0.2);
\draw[blue, dotted] (0.5, -0.2)
to [out=90, in=270] (1.1, 0.2);
\end{scope}

\draw[red] (0.0, 4.8)
to [out=180, in=90] (-0.8,4.0)
to [out=270, in=90] (-0.1, 1.0)
to [out=270, in=90] (-0.1,-1.0)
to [out=270, in=90] (-0.8, -4.0)
to [out=270, in=180] (0.0, -4.8)
to [out=0, in=270] (0.8, -4.0)
to [out=90, in=270] (0.4, -1.0)
to [out=90, in=270] (0.4, 1.0)
to [out=90, in=270] (0.8, 4.0)
to [out=90, in=0] (0,4.8);

\draw[blue] (0.0, 4.9)
to [out=180, in=90] (-0.9,4.0)
to [out=270, in=106] (-0.22, 1.6)
to [out=283, in=120] (1.3,0.20);
\draw[blue, dotted] (1.25,0.2)
to [out=240, in=60] (-1.25,-0.4);
\draw[blue] (-1.3,-0.4)
to [out=270, in=80] (-0.25,-1.6)
to [out=260, in=90] (-0.9, -4.0)
to [out=270, in=180] (0.0, -4.9)
to [out=0, in=270] (0.9, -4.0)
to [out=90, in=280] (0.5,-1.6)
to [out=100, in=290] (-1.3, -0.15);
\draw[blue, dotted] (1.3,0.55)
to [out=240, in=60] (-1.25,-0.15);
\draw[blue] (1.35,0.60)
to [out=130, in=290] (0.5, 1.3)
to [out=90, in=270] (0.9, 4.0)
to [out=90, in=0] (0,4.9);

\draw[red] (-1.04,3.75)
to [out=180, in=120] (-1.7,2.6)
to [out=300, in=90] (-0.4,1.6)
to [out=270, in=90] (-0.4,-1.6)
to [out=270, in=60] (-1.7, -2.6)
to [out=240, in=180] (-1.04, -3.75);
\draw[line width=1,red, dotted] (-0.95, -3.75)
-- (-0.55, -3.75);

\draw[red] (-0.5, -3.75)
to [out=0, in=270] (0.1,-1.6)
to [out=90, in=270] (0.1,1.6)
to [out=90, in=0.0] (-0.5,3.75);
\draw[line width=1,red, dotted] (-0.95, 3.75)
-- (-0.55, 3.75);

\draw[blue] (-1.04,3.85)
to [out=180, in=120] (-1.8,2.6)
to [out=300, in=90] (-0.5,1.5)
to [out=270, in=120] (1.3,0.05);
\draw[blue, dotted] (1.3, 0.0)
to [out=240, in=60] (-1.3,-0.6);
\draw[blue] (-1.35,-0.6)
to [out=290, in=90] (-0.5,-1.5)
to [out=270, in=60] (-1.8, -2.6)
to [out=240, in=180] (-1.04, -3.85);
\draw[line width=1,blue, dotted] (-0.95, -3.85)
-- (-0.55, -3.85);

\draw[blue] (-0.5, -3.85)
to [out=0, in=270] (0.2,-1.6)
to [out=90, in=290] (-1.3, -0.3);
\draw[blue, dotted] (-1.3, -0.3)
to [out=60, in=240] (1.3, 0.4);
\draw[blue] (1.3,0.4)
to [out=120, in=270] (0.2,1.5)
to [out=90, in=0.0] (-0.5,3.85);
\draw[line width=1,blue, dotted] (-0.95, 3.85)
-- (-0.55, 3.85);

\end{tikzpicture}

 \caption{All lines added after step~\ref{g5}.}
\end{center}

\end{figure}

\item\label{g6} Now we add the edges of the graph one by one, starting
from the edges in the tree. To add an edge of sign~$s$ in the tree,
we choose one cylinder with sign~$s$ in each
of the corresponding pair of surfaces.
 These cylinders should be crossed only
by one blue line (i.e. distinct from the main one, when the corresponding surface has either more than two cylinders or positive genus).
Then we substitute these two cylinders with one cylinder
that joins the upper surfaces, and another one that joins the lower
ones. The red lines are just directly glued. The blue lines are also
glued to form a new one. This new blue line goes parallel to the new red
line in one of the new cylinders, but performs a Dehn
twist around the other one. The direction of the Dehn twist will be
given by the sign $s$ of the edge.

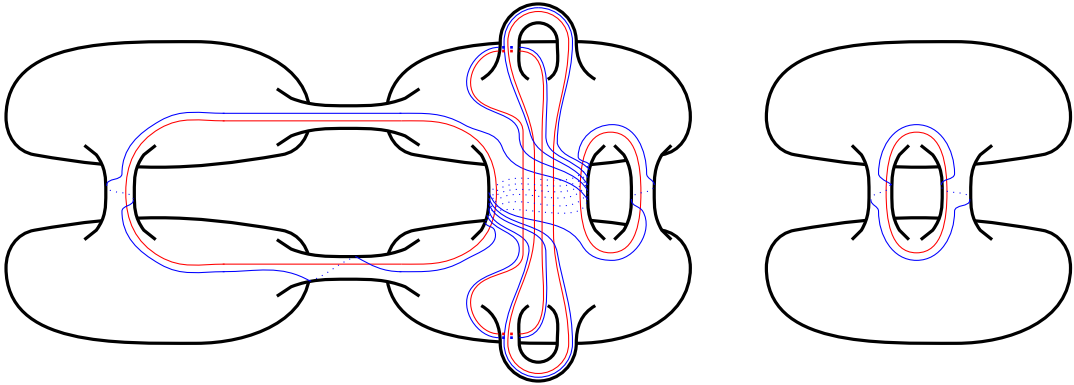
\begin{figure}[ht]
 \begin{center}
 \begin{tikzpicture}[scale=0.5]

 \def\geno{

     \draw[very thick] (2.25+1,1) to [out=10,in=270] (3+1,2.0)
     to [out=90, in=0] (0+1,4.0)
     to [out=180, in=90] (-3-1, 2)
     to [out=270, in=170] (-2.25-1,1) to[out=-10,in=190] (2.25+1,1);}
 \def\asa{
     \fill[white]
     (1.5,3.0)
     to [out=150, in=270] (1.0, 4.0)
     to [out=90, in=0.0] (0.0, 5.0)
     to [out=180, in=90] (-1.0,4.0)
     to [out=270, in=30] (-1.5,3.0)--
     (-0.25,3.0)
     to [out=150, in=270] (-0.5,4.0)
     to [out=90, in=180] (0,4.5)
     to [out=0, in=90] (0.5,4.0)
     to [out=270, in=30] (0.25, 3.0);

     \draw[very thick] (1.5,3.0)
     to [out=150, in=270] (1.0, 4.0)
     to [out=90, in=0.0] (0.0, 5.0)
     to [out=180, in=90] (-1.0,4.0)
     to [out=270, in=30] (-1.5,3.0);

     \draw[very thick] (0.25,3.0)
     to [out=30, in=270] (0.5,4.0)
     to [out=90, in=0] (0,4.5)
     to [out=180, in=90] (-0.5,4.0)
     to [out=270, in=150] (-0.25, 3.0);
     }

 \def\tubo{
     \fill[white] (-0.75,1.25) -- (-0.5,1)
     to [out=300, in=90] (-0.3,0)
     to [out=270, in = 60] (-0.5,-1)
     -- (-0.75,-1.25)-- (0.75,-1.25) -- (0.5,-1)
     to [out=-240, in=-90] (0.3,0)
     to [out=-270, in = -120] (0.5,1)
     -- (0.75,1.25)-- (-0.75,1.25);

     \draw[ very thick] (-0.75,1.25) -- (-0.5,1)
     to [out=300, in=90] (-0.3,0)
     to [out=270, in = 60] (-0.5,-1)
     -- (-0.75,-1.25);
     \draw[very thick] (0.75,1.25) -- (0.5,1)
     to [out=240, in=90] (0.3,0)
     to [out=270, in = 120] (0.5,-1)
     -- (0.75,-1.25);
     }

 \def\tubogordo{
     \fill[white] (-1.75,1.25) -- (-1.5,1)
     to [out=300, in=90] (-1.3,0)
     to [out=270, in = 60] (-1.5,-1)
     -- (-1.75,-1.25)-- (1.75,-1.25) -- (1.5,-1)
     to [out=-240, in=-90] (1.3,0)
     to [out=-270, in = -120] (1.5,1)
     -- (1.75,1.25)-- (-1.75,1.25);

     \draw[ very thick] (-1.75,1.25) -- (-1.5,1)
     to [out=300, in=90] (-1.3,0)
     to [out=270, in = 60] (-1.5,-1)
     -- (-1.75,-1.25);
     \draw[very thick] (1.75,1.25) -- (1.5,1)
     to [out=240, in=90] (1.3,0)
     to [out=270, in = 120] (1.5,-1)
     -- (1.75,-1.25);
     }

 \geno
 \asa

 \begin{scope}[yscale=-1]
 \geno
 \asa
 \end{scope}

 \tubogordo

 \begin{scope}[xshift=2.75cm]
 \tubo
 \end{scope}

 \begin{scope}[xshift=-10cm]
 \geno

 \begin{scope}[yscale=-1]
 \geno
 \end{scope}

 \begin{scope}[xshift=-1cm,xscale=1.25]
 \tubo

 \end{scope}

 \end{scope}

 \begin{scope}[xshift=10cm]
 \geno

 \begin{scope}[yscale=-1]
 \geno
 \end{scope}

 \begin{scope}[xshift=-1cm]
 \tubo
 \end{scope}

 \begin{scope}[xshift=1cm,xscale=1.25]
 \tubo
 \end{scope}

 \end{scope}

 \begin{scope}[xshift=-5cm, yshift=2cm, rotate=90, yscale=1.5]
 \tubo
 \end{scope}

 \begin{scope}[xshift=-5cm, yshift=-2cm, rotate=90, yscale=1.5]
 \tubo
 \end{scope}

 \def\curvaroja{
     \draw[red] (0.0, 1.6)
     to [out=180, in=90] (-0.8, 0.0)
     to [out=270, in=180] (0.0, -1.6)
     to [out=0, in=270] (0.8, 0.0)
     to [out=90, in=0] (0.0, 1.6);
     }
 \begin{scope}[xshift=-1.9cm]
 \draw[red]  (-1.7,-1.90)
 to [out=0, in=210] (0.0, -1.6)
 to [out=30, in=270] (0.8, 0.0)
 to [out=90, in=330] (0.0, 1.6)
 to [out=150, in=0] (-1.7, 1.9);

 \end{scope}

 \begin{scope}[xshift=1.9cm]
 \curvaroja
 \end{scope}

 \begin{scope}[xshift=9.95cm]
 \curvaroja
 \end{scope}

 \begin{scope}[xshift=-9.95cm]
 \draw[red] (1.7, 1.9)
 to [out=180, in=30] (0.0, 1.7)
 to [out=210, in=90] (-0.9, 0.0)
 to [out=270, in=150] (0.0, -1.7)
 to [out=330, in=180] (1.7, -1.9);

 \end{scope}

 \begin{scope}[xshift=-9.95cm]
 \draw[blue] (1.7, 2.1)
 to [out=180, in=30] (0.0, 1.9)
 to [out=210, in=90] (-1.0, 0.5)
 to [out=270, in=90] (-1.4, 0.2);
 \draw[blue, dotted] (-1.4, 0.2)
 to [out=270, in=90] (-0.7, -0.2);
 \draw[blue]  (-0.7,-0.2)
 to [out=270, in=90] (-1.0, -0.5)
 to [out=270, in=150] (0.0, -1.9)
 to [out=330, in=180] (1.7, -2.1);

 \end{scope}

 \begin{scope}[xshift=9.95cm]
 \draw[blue] (0.65, 0.2)
 to [out=90, in=270] (1.0, 0.5)
 to [out=90, in=0] (0.0, 1.8)
 to [out=180, in=90] (-1.0, 0.5)
 to [out=270, in=0] (-0.7, 0.2);
 \draw[blue, dotted] (-0.7, 0.2)
 to [out=270, in=90] (-1.2, -0.2);
 \draw[blue]  (-1.2,-0.2)
 to [out=270, in=90] (-1.0, -0.5)
 to [out=270, in=180] (0.0, -1.8)
 to [out=0, in=270] (1.0, -0.5)
 to [out=90, in=270] (1.4, -0.2);
 \draw[blue, dotted] (1.4, -0.2)
 to [out=90, in=270] (0.65, 0.2);
 \end{scope}

 \begin{scope}[xshift=-1.95cm]
 \draw[blue] (3.25, -0.2)
 to [out=120, in=290] (1., 1.2)
 to [out=110, in=330] (0.0, 1.8)
 to [out=150,  in=0] (-1.7, 2.1);

 \draw[blue]  (-1.7,-2.1)
 to [out=0,  in=210] (0.0, -1.8)
 to [out=30, in=290] (0.8, -1.0)
 to [out=110, in=290] (0.55, -0.8);
 \draw[blue, dotted] (3.25, -0.1)
 to [out=270, in=90] (0.55, -0.9);
 \end{scope}

 \begin{scope}[xshift=1.95cm]
 \draw[blue] (1.1, 0.2)
 to [out=90, in=270] (0.9, 0.5)
 to [out=90, in=0] (0.0, 1.8)
 to [out=180, in=90] (-0.9, 0.95)
 to [out=270, in=60] (-0.62, 0.7);
 \draw[blue, dotted] (-0.65, 0.7)
 to [out=240, in=60] (-3.2, -0.0);
 \draw[blue]  (-3.25,-0.0)
 to [out=290, in=120] (-0.8, -1.4)
 to [out=300, in=180] (0.0, -1.8)
 to [out=0, in=300] (0.8, -0.5)
 to [out=120, in=270] (0.5, -0.2);
 \draw[blue, dotted] (0.5, -0.2)
 to [out=90, in=270] (1.1, 0.2);
 \end{scope}

 \draw[red] (0.0, 4.8)
 to [out=180, in=90] (-0.8,4.0)
 to [out=270, in=90] (-0.1, 1.0)
 to [out=270, in=90] (-0.1,-1.0)
 to [out=270, in=90] (-0.8, -4.0)
 to [out=270, in=180] (0.0, -4.8)
 to [out=0, in=270] (0.8, -4.0)
 to [out=90, in=270] (0.4, -1.0)
 to [out=90, in=270] (0.4, 1.0)
 to [out=90, in=270] (0.8, 4.0)
 to [out=90, in=0] (0,4.8);

 \draw[blue] (0.0, 4.9)
 to [out=180, in=90] (-0.9,4.0)
 to [out=270, in=106] (-0.22, 1.6)
 to [out=283, in=120] (1.3,0.20);
 \draw[blue, dotted] (1.25,0.2)
 to [out=240, in=60] (-1.25,-0.4);
 \draw[blue] (-1.3,-0.4)
 to [out=270, in=80] (-0.25,-1.6)
 to [out=260, in=90] (-0.9, -4.0)
 to [out=270, in=180] (0.0, -4.9)
 to [out=0, in=270] (0.9, -4.0)
 to [out=90, in=280] (0.5,-1.6)
 to [out=100, in=290] (-1.3, -0.15);
 \draw[blue, dotted] (1.3,0.55)
 to [out=240, in=60] (-1.25,-0.15);
 \draw[blue] (1.35,0.60)
 to [out=130, in=290] (0.5, 1.3)
 to [out=90, in=270] (0.9, 4.0)
 to [out=90, in=0] (0,4.9);

 \draw[red] (-1.04,3.75)
 to [out=180, in=120] (-1.7,2.6)
 to [out=300, in=90] (-0.4,1.6)
 to [out=270, in=90] (-0.4,-1.6)
 to [out=270, in=60] (-1.7, -2.6)
 to [out=240, in=180] (-1.04, -3.75);
 \draw[line width=1,red, dotted] (-0.95, -3.75)
 -- (-0.55, -3.75);

 \draw[red] (-0.5, -3.75)
 to [out=0, in=270] (0.1,-1.6)
 to [out=90, in=270] (0.1,1.6)
 to [out=90, in=0.0] (-0.5,3.75);
 \draw[line width=1,red, dotted] (-0.95, 3.75)
 -- (-0.55, 3.75);

 \draw[blue] (-1.04,3.85)
 to [out=180, in=120] (-1.8,2.6)
 to [out=300, in=90] (-0.5,1.5)
 to [out=270, in=120] (1.3,0.05);
 \draw[blue, dotted] (1.3, 0.0)
 to [out=240, in=60] (-1.3,-0.6);
 \draw[blue] (-1.35,-0.6)
 to [out=290, in=90] (-0.5,-1.5)
 to [out=270, in=60] (-1.8, -2.6)
 to [out=240, in=180] (-1.04, -3.85);
 \draw[line width=1,blue, dotted] (-0.95, -3.85)
 -- (-0.55, -3.85);

 \draw[blue] (-0.5, -3.85)
 to [out=0, in=270] (0.2,-1.6)
 to [out=90, in=290] (-1.3, -0.3);
 \draw[blue, dotted] (-1.3, -0.3)
 to [out=60, in=240] (1.3, 0.4);
 \draw[blue] (1.3,0.4)
 to [out=120, in=270] (0.2,1.5)
 to [out=90, in=0.0] (-0.5,3.85);
 \draw[line width=1,blue, dotted] (-0.95, 3.85)
 -- (-0.55, 3.85);

 \draw[red] (-8.3,1.9) -- (-3.6,1.9);
 \draw[blue] (-8.3,2.1) -- (-3.6,2.1);
 \draw[red] (-8.3,-1.9) -- (-3.6,-1.9);
 \draw[blue] (-8.3, -2.1)
 to [out=0, in=150] (-6, -2.35);
 \draw[blue, dotted] (-6.0, -2.35)
 to  [out=20, in=200] (-4.8, -1.7);
 \draw[blue] (-4.8, -1.7)
 to [out=330, in =180] (-3.6, -2.1);

 \end{tikzpicture}

 \caption{Curves after adding one edge in step~\ref{g6}.}
 \end{center}

\end{figure}

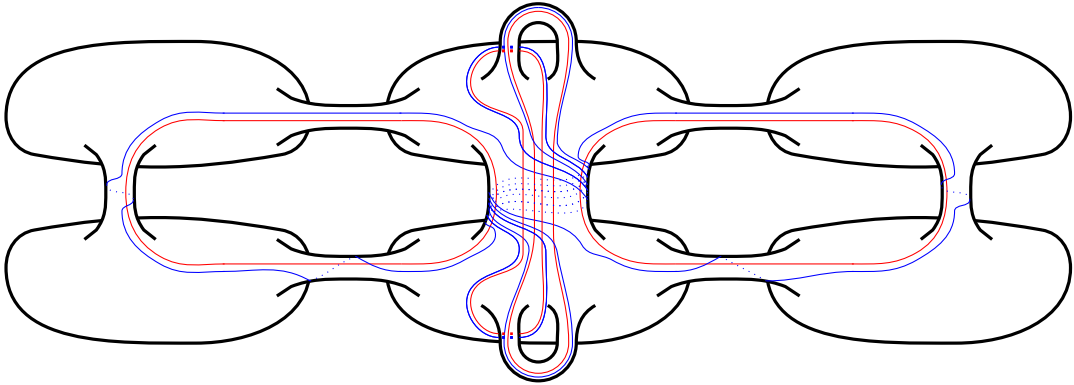
\begin{figure}[ht]
 \begin{center}
 \begin{tikzpicture}[scale=0.5]

 \def\geno{

     \draw[very thick] (2.25+1,1) to [out=10,in=270] (3+1,2.0)
     to [out=90, in=0] (0+1,4.0)
     to [out=180, in=90] (-3-1, 2)
     to [out=270, in=170] (-2.25-1,1) to[out=-10,in=190] (2.25+1,1);}
 \def\asa{
     \fill[white]
     (1.5,3.0)
     to [out=150, in=270] (1.0, 4.0)
     to [out=90, in=0.0] (0.0, 5.0)
     to [out=180, in=90] (-1.0,4.0)
     to [out=270, in=30] (-1.5,3.0)--
     (-0.25,3.0)
     to [out=150, in=270] (-0.5,4.0)
     to [out=90, in=180] (0,4.5)
     to [out=0, in=90] (0.5,4.0)
     to [out=270, in=30] (0.25, 3.0);

     \draw[very thick] (1.5,3.0)
     to [out=150, in=270] (1.0, 4.0)
     to [out=90, in=0.0] (0.0, 5.0)
     to [out=180, in=90] (-1.0,4.0)
     to [out=270, in=30] (-1.5,3.0);

     \draw[very thick] (0.25,3.0)
     to [out=30, in=270] (0.5,4.0)
     to [out=90, in=0] (0,4.5)
     to [out=180, in=90] (-0.5,4.0)
     to [out=270, in=150] (-0.25, 3.0);
     }

 \def\tubo{
     \fill[white] (-0.75,1.25) -- (-0.5,1)
     to [out=300, in=90] (-0.3,0)
     to [out=270, in = 60] (-0.5,-1)
     -- (-0.75,-1.25)-- (0.75,-1.25) -- (0.5,-1)
     to [out=-240, in=-90] (0.3,0)
     to [out=-270, in = -120] (0.5,1)
     -- (0.75,1.25)-- (-0.75,1.25);

     \draw[ very thick] (-0.75,1.25) -- (-0.5,1)
     to [out=300, in=90] (-0.3,0)
     to [out=270, in = 60] (-0.5,-1)
     -- (-0.75,-1.25);
     \draw[very thick] (0.75,1.25) -- (0.5,1)
     to [out=240, in=90] (0.3,0)
     to [out=270, in = 120] (0.5,-1)
     -- (0.75,-1.25);
     }

 \def\tubogordo{
     \fill[white] (-1.75,1.25) -- (-1.5,1)
     to [out=300, in=90] (-1.3,0)
     to [out=270, in = 60] (-1.5,-1)
     -- (-1.75,-1.25)-- (1.75,-1.25) -- (1.5,-1)
     to [out=-240, in=-90] (1.3,0)
     to [out=-270, in = -120] (1.5,1)
     -- (1.75,1.25)-- (-1.75,1.25);

     \draw[ very thick] (-1.75,1.25) -- (-1.5,1)
     to [out=300, in=90] (-1.3,0)
     to [out=270, in = 60] (-1.5,-1)
     -- (-1.75,-1.25);
     \draw[very thick] (1.75,1.25) -- (1.5,1)
     to [out=240, in=90] (1.3,0)
     to [out=270, in = 120] (1.5,-1)
     -- (1.75,-1.25);
     }

 \geno
 \asa

 \begin{scope}[yscale=-1]
 \geno
 \asa
 \end{scope}

 \tubogordo

 \begin{scope}[xshift=-10cm]
 \geno

 \begin{scope}[yscale=-1]
 \geno
 \end{scope}

 \begin{scope}[xshift=-1cm,xscale=1.25]
 \tubo

 \end{scope}

 \end{scope}

 \begin{scope}[xshift=10cm]
 \geno

 \begin{scope}[yscale=-1]
 \geno
 \end{scope}

 \begin{scope}[xshift=1cm,xscale=1.25]
 \tubo

 \end{scope}
 \end{scope}

 \begin{scope}[xshift=-5cm, yshift=2cm, rotate=90, yscale=1.5]
 \tubo
 \end{scope}

 \begin{scope}[xshift=-5cm, yshift=-2cm, rotate=90, yscale=1.5]
 \tubo
 \end{scope}

 \begin{scope}[xshift=5cm, yshift=2cm, rotate=90, yscale=1.5]
 \tubo
 \end{scope}

 \begin{scope}[xshift=5cm, yshift=-2cm, rotate=90, yscale=1.5]
 \tubo
 \end{scope}

 \def\curvaroja{
     \draw[red] (0.0, 1.6)
     to [out=180, in=90] (-0.8, 0.0)
     to [out=270, in=180] (0.0, -1.6)
     to [out=0, in=270] (0.8, 0.0)
     to [out=90, in=0] (0.0, 1.6);
     }
 \begin{scope}[xshift=-1.9cm]
 \draw[red]  (-1.7,-1.90)
 to [out=0, in=210] (0.0, -1.6)
 to [out=30, in=270] (0.8, 0.0)
 to [out=90, in=330] (0.0, 1.6)
 to [out=150, in=0] (-1.7, 1.9);

 \end{scope}

 \begin{scope}[xshift=1.9cm,xscale=-1]
 \draw[red] (-1.7, -1.9)
 to [out=0,  in=210] (0.0, -1.6)
 to [out=30, in=270] (0.8, 0.0)
 to [out=90, in=330] (0.0, 1.6)
 to [out=150,  in=0] (-1.7, 1.9);
 \end{scope}

 \begin{scope}[xshift=9.95cm]
 \draw[red] (-1.7, -1.9)
 to [out=0, in=210] (0.0, -1.6)
 to [out=30, in=270] (0.8, 0.0)
 to [out=90, in=330] (0.0, 1.6)
 to [out=150, in=0] (-1.7, 1.9);
 \end{scope}

 \begin{scope}[xshift=-9.95cm]
 \draw[red] (1.7, 1.9)
 to [out=180, in=30] (0.0, 1.7)
 to [out=210, in=90] (-0.9, 0.0)
 to [out=270, in=150] (0.0, -1.7)
 to [out=330, in=180] (1.7, -1.9);

 \end{scope}

 \begin{scope}[xshift=-9.95cm]
 \draw[blue] (1.7, 2.1)
 to [out=180, in=30] (0.0, 1.9)
 to [out=210, in=90] (-1.0, 0.5)
 to [out=270, in=90] (-1.4, 0.2);
 \draw[blue, dotted] (-1.4, 0.2)
 to [out=270, in=90] (-0.7, -0.2);
 \draw[blue]  (-0.7,-0.2)
 to [out=270, in=90] (-1.0, -0.5)
 to [out=270, in=150] (0.0, -1.9)
 to [out=330, in=180] (1.7, -2.1);

 \end{scope}

 \begin{scope}[xshift=9.95cm]
 \draw[blue] (0.65, 0.2)
 to [out=90, in=270] (1.0, 0.5)
 to [out=90, in=330] (0.0, 1.8)
 to [out=150, in=0] (-1.7, 2.1);

 \draw[blue]  (-1.7,-2.1)
 to [out=0,  in=210] (0.0, -1.8)
 to [out=30, in=270] (1.0, -0.5)
 to [out=90, in=270] (1.4, -0.2);
 \draw[blue, dotted] (1.4, -0.2)
 to [out=90, in=270] (0.65, 0.2);
 \end{scope}

 \begin{scope}[xshift=-1.95cm]
 \draw[blue] (3.25, -0.2)
 to [out=120, in=290] (1., 1.2)
 to [out=110, in=330] (0.0, 1.8)
 to [out=150,  in=0] (-1.7, 2.1);

 \draw[blue]  (-1.7,-2.1)
 to [out=0,  in=210] (0.0, -1.8)
 to [out=30, in=290] (0.8, -1.0)
 to [out=110, in=290] (0.55, -0.8);
 \draw[blue, dotted] (3.25, -0.1)
 to [out=270, in=90] (0.55, -0.9);
 \end{scope}

 \begin{scope}[xshift=1.95cm]
 \draw[blue] (1.7, 2.1)
 to [out=180, in=30] (0.0, 1.8)
 to [out=210, in=90] (-0.9, 0.95)
 to [out=270, in=60] (-0.62, 0.7);
 \draw[blue, dotted] (-0.65, 0.7)
 to [out=240, in=60] (-3.2, -0.0);
 \draw[blue]  (-3.25,-0.0)
 to [out=290, in=120] (-0.8, -1.4)
 to [out=300, in=150] (0.0, -1.8)
 to [out=330,  in=180] (1.7, -2.1);

 \end{scope}

 \draw[red] (0.0, 4.8)
 to [out=180, in=90] (-0.8,4.0)
 to [out=270, in=90] (-0.1, 1.0)
 to [out=270, in=90] (-0.1,-1.0)
 to [out=270, in=90] (-0.8, -4.0)
 to [out=270, in=180] (0.0, -4.8)
 to [out=0, in=270] (0.8, -4.0)
 to [out=90, in=270] (0.4, -1.0)
 to [out=90, in=270] (0.4, 1.0)
 to [out=90, in=270] (0.8, 4.0)
 to [out=90, in=0] (0,4.8);

 \draw[blue] (0.0, 4.9)
 to [out=180, in=90] (-0.9,4.0)
 to [out=270, in=106] (-0.22, 1.6)
 to [out=283, in=120] (1.3,0.20);
 \draw[blue, dotted] (1.25,0.2)
 to [out=240, in=60] (-1.25,-0.4);
 \draw[blue] (-1.3,-0.4)
 to [out=270, in=80] (-0.25,-1.6)
 to [out=260, in=90] (-0.9, -4.0)
 to [out=270, in=180] (0.0, -4.9)
 to [out=0, in=270] (0.9, -4.0)
 to [out=90, in=280] (0.5,-1.6)
 to [out=100, in=290] (-1.3, -0.15);
 \draw[blue, dotted] (1.3,0.55)
 to [out=240, in=60] (-1.25,-0.15);
 \draw[blue] (1.35,0.60)
 to [out=130, in=290] (0.5, 1.3)
 to [out=90, in=270] (0.9, 4.0)
 to [out=90, in=0] (0,4.9);

 \draw[red] (-1.04,3.75)
 to [out=180, in=120] (-1.7,2.6)
 to [out=300, in=90] (-0.4,1.6)
 to [out=270, in=90] (-0.4,-1.6)
 to [out=270, in=60] (-1.7, -2.6)
 to [out=240, in=180] (-1.04, -3.75);
 \draw[line width=1,red, dotted] (-0.95, -3.75)
 -- (-0.55, -3.75);

 \draw[red] (-0.5, -3.75)
 to [out=0, in=270] (0.1,-1.6)
 to [out=90, in=270] (0.1,1.6)
 to [out=90, in=0.0] (-0.5,3.75);
 \draw[line width=1,red, dotted] (-0.95, 3.75)
 -- (-0.55, 3.75);

 \draw[blue] (-1.04,3.85)
 to [out=180, in=120] (-1.8,2.6)
 to [out=300, in=90] (-0.5,1.5)
 to [out=270, in=120] (1.3,0.05);
 \draw[blue, dotted] (1.3, 0.0)
 to [out=240, in=60] (-1.3,-0.6);
 \draw[blue] (-1.35,-0.6)
 to [out=290, in=90] (-0.5,-1.5)
 to [out=270, in=60] (-1.8, -2.6)
 to [out=240, in=180] (-1.04, -3.85);
 \draw[line width=1,blue, dotted] (-0.95, -3.85)
 -- (-0.55, -3.85);

 \draw[blue] (-0.5, -3.85)
 to [out=0, in=270] (0.2,-1.6)
 to [out=90, in=290] (-1.3, -0.3);
 \draw[blue, dotted] (-1.3, -0.3)
 to [out=60, in=240] (1.3, 0.4);
 \draw[blue] (1.3,0.4)
 to [out=120, in=270] (0.2,1.5)
 to [out=90, in=0.0] (-0.5,3.85);
 \draw[line width=1,blue, dotted] (-0.95, 3.85)
 -- (-0.55, 3.85);

 \draw[red] (-8.3,1.9) -- (-3.6,1.9);
 \draw[blue] (-8.3,2.1) -- (-3.6,2.1);
 \draw[red] (-8.3,-1.9) -- (-3.6,-1.9);
 \draw[blue] (-8.3, -2.1)
 to [out=0, in=150] (-6, -2.35);
 \draw[blue, dotted] (-6.0, -2.35)
 to  [out=20, in=200] (-4.8, -1.7);
 \draw[blue] (-4.8, -1.7)
 to [out=330, in =180] (-3.6, -2.1);

 \draw[red] (8.3,1.9) -- (3.6,1.9);
 \draw[blue] (8.3,2.1) -- (3.6,2.1);
 \draw[red] (8.3,-1.9) -- (3.6,-1.9);
 \draw[blue] (8.3, -2.1)
 to [out=180, in=0] (6, -2.35);
 \draw[blue, dotted] (6.0, -2.35)
 to  [out=150, in=330] (4.8, -1.7);
 \draw[blue] (4.8, -1.7)
 to [out=210, in=0] (3.6, -2.1);

 \draw [dotted, red, line width=1] (-0.95, 3.75)
 -- (-0.55, 3.75);

 \draw[blue] (-1.04,3.85)
 to [out=180, in=120] (-1.8,2.6)
 to [out=300, in=90] (-0.5,1.5)
 to [out=270, in=120] (1.3,0.05);
 \draw[blue, dotted] (1.3, 0.0)
 to [out=240, in=60] (-1.3,-0.6);
 \draw[blue] (-1.35,-0.6)
 to [out=290, in=90] (-0.5,-1.5)
 to [out=270, in=60] (-1.8, -2.6)
 to [out=240, in=180] (-1.04, -3.85);
 \draw[line width=1,blue, dotted] (-0.95, -3.85)
 -- (-0.55, -3.85);

 \draw[blue] (-0.5, -3.85)
 to [out=0, in=270] (0.2,-1.6)
 to [out=90, in=290] (-1.3, -0.3);
 \draw[blue, dotted] (-1.3, -0.3)
 to [out=60, in=240] (1.3, 0.4);
 \draw[blue] (1.3,0.4)
 to [out=120, in=270] (0.2,1.5)
 to [out=90, in=0.0] (-0.5,3.85);
 \draw[line width=1,blue, dotted] (-0.95, 3.85)
 -- (-0.55, 3.85);

 \end{tikzpicture}
 \caption{Curves after adding the second edge in step~\ref{g6}}
 \end{center}

\end{figure}

\item\label{g7} If the edge creates a loop, we choose cylinders and substitute them by new
ones as before. The two red lines $\delta_r$ and $\gamma_r$ are substituted by two new ones.
The first one is constructed as in~\ref{g6}. In order to construct the second one
we choose (arbitrarily) one of the old ones, say $\gamma_r$; it can be decomposed as $\gamma_1\cdot\gamma_2$ where $\gamma_2$ is the the path contained
in the tube which is going to disappear. We take two parallel copies of $\gamma_1$ and we  connect them by turning around the new cylinders in such a way that
the resulting curve is disjoint with the first red curve.
%
% and another one that follows parallel to one of the preexisting ones, turns
% around one of the new cylinders, then goes back following a parallel path and
% turns around the other new cylinder.
Let $\delta_b$ and $\gamma_b$ be the two old blue lines.
As before, a new blue line is obtained by
gluing $\delta_b$ and $\gamma_b$ as in~\ref{g6}, going parallel to the corresponding red one in one of
the new cylinders and performing a Dehn twist along the other one. The second
new blue line is created from one of the preexisting ones (say $\delta_b$ in this example) as we did for the red one. That is, we decompose $\delta_b$ as
$\delta_1\cdot\delta_2$ where $\delta_2$ is the the path contained
in the tube which is going to disappear; we take two parallel copies of $\delta_1$ and we  connect them by turning around the new cylinders in such a way that
the resulting curve is disjoint with the first blue curve.
\end{enumerate}

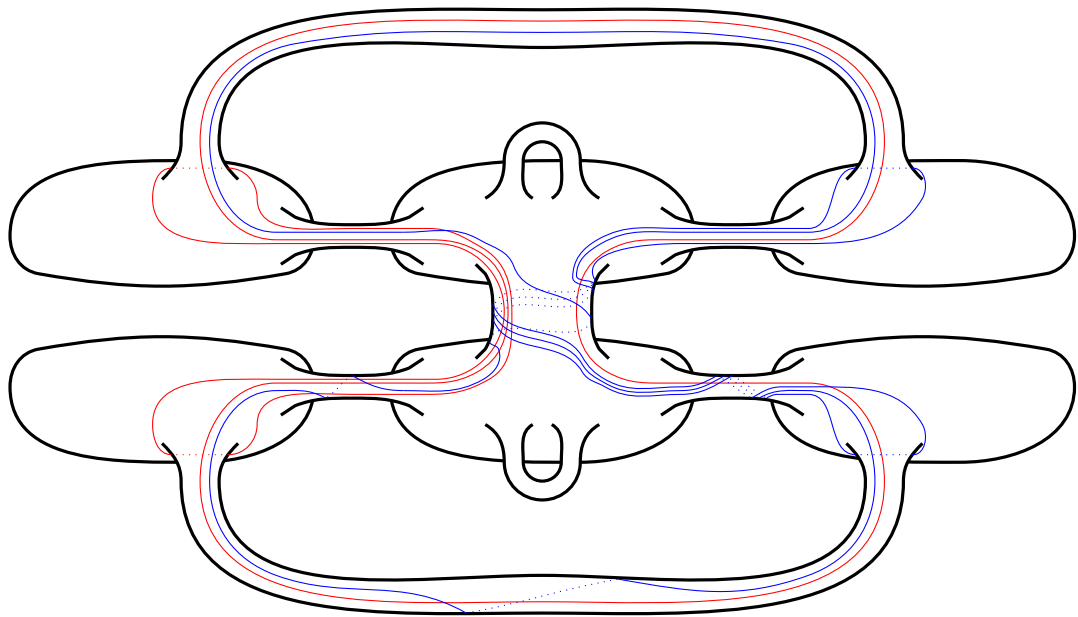
\begin{figure}[ht]
\begin{center}
\begin{tikzpicture}[scale=0.5]

\def\geno{

    \draw[very thick] (2.25+1,1) to [out=10,in=270] (3+1,2.0)
    to [out=90, in=0] (0+1,4.0)
    to [out=180, in=90] (-3-1, 2)
    to [out=270, in=170] (-2.25-1,1) to[out=-10,in=190] (2.25+1,1);}
\def\asa{
    \fill[white]
    (1.5,3.0)
    to [out=150, in=270] (1.0, 4.0)
    to [out=90, in=0.0] (0.0, 5.0)
    to [out=180, in=90] (-1.0,4.0)
    to [out=270, in=30] (-1.5,3.0)--
    (-0.25,3.0)
    to [out=150, in=270] (-0.5,4.0)
    to [out=90, in=180] (0,4.5)
    to [out=0, in=90] (0.5,4.0)
    to [out=270, in=30] (0.25, 3.0);

    \draw[very thick] (1.5,3.0)
    to [out=150, in=270] (1.0, 4.0)
    to [out=90, in=0.0] (0.0, 5.0)
    to [out=180, in=90] (-1.0,4.0)
    to [out=270, in=30] (-1.5,3.0);

    \draw[very thick] (0.25,3.0)
    to [out=30, in=270] (0.5,4.0)
    to [out=90, in=0] (0,4.5)
    to [out=180, in=90] (-0.5,4.0)
    to [out=270, in=150] (-0.25, 3.0);
    }

 \def\tubo{
     \fill[white] (-0.75,1.25) -- (-0.5,1)
     to [out=300, in=90] (-0.3,0)
     to [out=270, in = 60] (-0.5,-1)
     -- (-0.75,-1.25)-- (0.75,-1.25) -- (0.5,-1)
     to [out=-240, in=-90] (0.3,0)
     to [out=-270, in = -120] (0.5,1)
     -- (0.75,1.25)-- (-0.75,1.25);

     \draw[ very thick] (-0.75,1.25) -- (-0.5,1)
     to [out=300, in=90] (-0.3,0)
     to [out=270, in = 60] (-0.5,-1)
     -- (-0.75,-1.25);
     \draw[very thick] (0.75,1.25) -- (0.5,1)
     to [out=240, in=90] (0.3,0)
     to [out=270, in = 120] (0.5,-1)
     -- (0.75,-1.25);
     }

 \def\tubogordo{
     \fill[white] (-1.75,1.25) -- (-1.5,1)
     to [out=300, in=90] (-1.3,0)
     to [out=270, in = 60] (-1.5,-1)
     -- (-1.75,-1.25)-- (1.75,-1.25) -- (1.5,-1)
     to [out=-240, in=-90] (1.3,0)
     to [out=-270, in = -120] (1.5,1)
     -- (1.75,1.25)-- (-1.75,1.25);

     \draw[ very thick] (-1.75,1.25) -- (-1.5,1)
     to [out=300, in=90] (-1.3,0)
     to [out=270, in = 60] (-1.5,-1)
     -- (-1.75,-1.25);
     \draw[very thick] (1.75,1.25) -- (1.5,1)
     to [out=240, in=90] (1.3,0)
     to [out=270, in = 120] (1.5,-1)
     -- (1.75,-1.25);
     }

 \def\tubopegado{
     \fill[white] (10, 3.5)
     to [out=135, in = 270] (9.5, 4.5)
     to [out=90, in=0] (0, 8)
     to [out=180, in=90] (-9.5,4.5)
     to [out=270, in=45]  (-10, 3.5) -- (-8, 3.5)
     to [out=135, in=270] (-8.5, 4.5)
     to [out=90, in=180] (0,6.5)
     to [out=0, in=90] (8.5,4.5)
     to [out=270, in=45] (8, 3.5);

     \draw[very thick] (10, 3.5)
     to [out=135, in = 270] (9.5, 4.5)
     to [out=90, in=0] (0, 8)
     to [out=180, in=90] (-9.5,4.5)
     to [out=270, in=45]  (-10, 3.5);
     \draw[very thick] (-8, 3.5)
     to [out=135, in=270] (-8.5, 4.5)
     to [out=90, in=180] (0,7)
     to [out=0, in=90] (8.5,4.5)
     to [out=270, in=45] (8, 3.5);
     }

 \geno
 \asa

 \begin{scope}[yscale=-1]
 \geno
 \asa
 \end{scope}

 \tubogordo

 \begin{scope}[xshift=-10cm]
 \geno

 \begin{scope}[yscale=-1]
 \geno
 \end{scope}

 \end{scope}

 \begin{scope}[xshift=10cm]
 \geno

 \begin{scope}[yscale=-1]
 \geno
 \end{scope}

 \end{scope}

 \begin{scope}[xshift=-5cm, yshift=2cm, rotate=90, yscale=1.5]
 \tubo
 \end{scope}

 \begin{scope}[xshift=-5cm, yshift=-2cm, rotate=90, yscale=1.5]
 \tubo
 \end{scope}

 \begin{scope}[xshift=5cm, yshift=2cm, rotate=90, yscale=1.5]
 \tubo
 \end{scope}

 \begin{scope}[xshift=5cm, yshift=-2cm, rotate=90, yscale=1.5]
 \tubo
 \end{scope}

 \tubopegado
 \begin{scope}[yscale=-1]
 \tubopegado
 \end{scope}

 \draw[red]  (-3.4,-1.90)
 to [out=0, in=210] (-1.7, -1.6)
 to [out=30, in=270] (-0.9, 0.0)
 to [out=90, in=330] (-1.7, 1.6)
 to [out=150, in=0] (-3.4, 1.9)
 --  (-7,1.9)
 to [out=180, in=270] (-9, 4.5)
 to [out=90, in=180] (0, 7.7)
 to [out=0, in=90] (9,4.5)
 to [out=270, in=0] (7,1.9)
 -- (3.4, 1.9)
 to [out=180, in=30] (1.7,1.6)
 to [out=210, in=90] (0.9,0.0)
 to [out=270, in=150] (1.7, -1.6)
 to [out=330, in=180] (3.4, -1.9)
 --(7, -1.9)
 to [out=0, in=90] (9, -4.5)
 to [out=270, in=0] (0, -7.7)
 to [out=180, in=270] (-9,-4.5)
 to [out= 90, in=180] (-7, -1.9)
 --  (-3.4,-1.9);

 \draw[red]  (-8.25, -3.8)
 to [out=0, in=270] (-7.5, -3)
 to [out=90, in=180] (-5,-2.2)
 -- (-3, -2.2)
 to [out=0, in=210] (-1.4, -1.8)
 to [out=30, in=270] (-0.8, 0.0)
 to [out=90, in=330] (-1.6, 1.7)
 to [out=150, in=0] (-3, 2.2)
 -- (-5, 2.2)
 to [out=180, in=270] (-7.5, 3)
 to [out=90, in=0] (-8.25,3.8);
 \draw[red, dotted] (-8.25, 3.8) --(-9.75,3.8);
 \draw[red] (-9.75, 3.8)
 to [out=180, in=90] (-10.25, 3)
 to [out=270, in=180] (-7,1.8)
 -- (-2.8,1.8)
 to [out=0, in=90] (-1, 0.0)
 to [out=270, in=0] (-2.8,-1.8)
 -- (-7, -1.8)
 to [out=180, in=90] (-10.25,-3)
 to [out=270, in=180] (-9.75,-3.8);
 \draw[red, dotted] (-8.25, -3.8) --(-9.75,-3.8);

 \draw[blue] (1.3, -0.2)
 to [out=120, in=290] (-0.75, 1.2)
 to [out=110, in=330] (-1.65, 1.9)
 to [out=150,  in=0] (-3.65, 2.1);

 \draw[blue]  (-3.65,-2.1)
 to [out=0,  in=210] (-1.55, -1.8)
 to [out=30, in=290] (-1.15, -1.0)
 to [out=110, in=290] (-1.4, -0.8);
 \draw[blue, dotted] (1.3, -0.1)
 to [out=270, in=90] (-1.4, -0.9);

 \draw[blue] (-3.65, -2.1)
to [out=180, in=330] (-5,-1.7);
\draw[blue, dotted] (-5,-1.7)
to [out=210, in=30] (-5.7, -2.3);
\draw[blue] (-5.7,-2.3)
 to [out=150, in=0] (-7,-2.1)
 to [out=180, in=90] (-8.75, -4.5)
 to [out=270, in=160] (-7, -7)
 to [out=340, in=150] (-2, -8);
 \draw[blue, dotted] (-2,-8)
 to [out=10, in=190] (2,-7.1);
 \draw[blue] (2,-7.1)
 to [out=350, in=200] (7,-7)
 to [out=20, in=270] (8.75, -4.5)
 to [out=90, in=0] (7, -2.1)
 to [out=180, in=30] (5.7, -2.3);

 \draw[blue] (-3.65, 2.1)
 to [out=180, in=0] (-7,2.1)
 to [out=180, in=270] (-8.75, 4.5)
 to [out=90, in = 190] (-6.5,7.1)
 to [out=10, in=180] (0,7.4)
 to [out=0, in=170] (6.5,7.1)
 to [out=-10, in=90] (8.75, 4.5)
 to [out=270, in=0] (7,2.1)
 to [out=180, in=0] (3.65, 2.1);

 \draw[blue] (3.65, 2.1)
 to [out=180, in=30] (1.55, 1.8)
 to [out=210, in=90] (0.9, 0.95)
 to [out=270, in=60] (1.33, 0.7);
 \draw[blue, dotted] (1.3, 0.7)
 to [out=240, in=60] (-1.25, -0.0);
 \draw[blue]  (-1.3,-0.0)
 to [out=290, in=120] (1.05, -1.4)
 to [out=300, in=150] (1.65, -1.9)
 to [out=330,  in=180] (3.65, -2.1)
 to [out=0, in=210] (4.8, -1.7);
\draw[blue, dotted] (4.8, -1.7)
to  [out=330, in=150] (5.7, -2.3);

 \draw[blue] (8.25, 3.8)
 to [out=180, in=0]
 (7, 2.2) -- (3.65, 2.2)
 to [out=180, in=30] (1.45, 1.9)
 to [out=210, in=90] (0.8, 0.9)
 to [out=270, in=60] (1.33, 0.5);
 \draw[blue] (9.75, 3.8)
 to [out=0, in= 60] (10, 3)
 to [out=240, in=0] (7,1.8)
 -- (3.65, 1.8)
 to [out=180, in=120] (1.4, 0.8);

 \draw[blue, dotted] (1.4, 0.9)
 to [out=240, in=60] (-1.25, 0.2);
 \draw[blue, dotted] (1.3, 0.5)
 to [out=240, in=60] (-1.25, -0.2);

 \draw[blue]  (-1.3,-0.2)
 to [out=290, in=120] (1.05, -1.6)
 to [out=300, in=150] (1.65, -2)
 to [out=330,  in=180] (3.65, -2.2)
to [out=0, in=210] (5.0, -1.7);
\draw[blue, dotted] (5,-1.7)
to [out=330, in=150] (5.9,-2.3);
\draw[blue] (5.9, -2.3)
to[out=30, in=180] (6.2,-2.2)
 -- (7, -2.2)
 to [out=0, in=180] (8.25,-3.8);

 \draw[blue]  (-1.3,0.2)
 to [out=290, in=120] (1.05, -1.2)
 to [out=300, in=150] (1.65, -1.8)
 to [out=330,  in=180] (3.65, -2)
to [out=0, in=210] (4.6, -1.7);
\draw[blue, dotted] (4.6,-1.7)
to [out=330, in=150] (5.5,-2.3);
\draw[blue] (5.5, -2.3)
to [out=30, in=180] (6.3,-2)
 -- (7, -2)
 to [out=0, in=120] (10, -3)
 to [out=300, in=0] (9.75, -3.8);

\draw[blue, dotted] (9.75, -3.8) -- (8.25, -3.8);
\draw[blue, dotted] (9.75, 3.8) -- (8.25, 3.8);

 \end{tikzpicture}
 \caption{Curves after step~\ref{g7}. The Handle curves have been omited for clarity.}
\end{center}

\end{figure}

\section{Plumbing graph of a graph manifold}\label{sec:graph-manifold}

We recall the needed facts of Neumann's plumbing construction~\cite{neu:81} of 
Waldhausen graph manifolds~\cite{wal:67,wal:67a}.
Everything in this section is known but we recall
it in order to fix notations. The \emph{atoms} of these constructions
are $\mathbb{S}^1$-fiber bundles. Since the actual family we are interested in satisfies
strong orientation properties we will restrict our attention to oriented
graph manifolds built up using oriented fibrations.

Let $\pi:M\to S$ be an oriented $\bs^1$-bundle 
over a closed oriented surface $S$ of genus~$g$.
The oriented $\bs^1$-bundles over a manifold~$N$ are classified by its Euler class in $H^2(N;\bz)$.
If $S$ is an oriented closed surface there is a natural identification $\bz\equiv H^2(S;\bz)$
and the Euler class is interpreted as an Euler number
$e\in\bz$. Let us recall for further use how to compute this number. Because of the Euler class classification,
any  oriented $\bs^1$-bundle over an oriented surface with boundary is homeomorphic to a product.

Let us consider a \emph{small} closed disk $D\subseteq S$ 
and consider the surface with boundary $\check{S}:=\overline{S\setminus D}$.
The restrictions of $\pi$ over $D$ and $\check{S}$  are product bundles. Let $\mu_1$
be the boundary of a meridian disk of the solid torus $\pi^{-1}(D)$ (oriented accordingly
as $\partial D$) and let $s_1$ be the boundary of a section defined over $\check{S}$
(oriented as $\partial\check{S}$). These two simple closed curves define
elements in $H_1(\pi^{-1}(\partial D);\bz)$ as an oriented fiber $\phi_1$ does.
Let us use multiplicative notation for $H_1(\pi^{-1}(\partial D);\bz)$. The fact
that $\mu_1$ and $s_1$ project onto opposite generators of $H_1(\partial D;\bz)$
implies that these elements satisfy a relation
\begin{equation}\label{eq:euler}
s_1\cdot\mu_1\cdot\phi^{e}=1 
\end{equation}
for some $e\in\mathbb{Z}$, which happens to be the Euler number of the fibration.
There are several variations of this construction. The first one is very simple, we can
replace $D,\check{S}$ by two surfaces $S_1,S_2$ with common connected boundary
such that $S=S_1\cup S_2$ and the formula~\eqref{eq:euler} is still true.
Moreover, there is  no need to assume that the their boundaries are connected. 
Assume that $\partial S_1=\partial S_2=S_1\cap S_2$ has $r$~connected components
$C_1,\dots,C_r$; let us fix sections $s_i:S_i\to M$, $i=1,2$, and let
us denote by $s_i^j$ the boundary of such section in $C_j$ (oriented as $\partial S_i$).
Then in $H_1(C_j;\bz)$ we have inequalities
\begin{equation}
s_1^j\cdot s_2^j\cdot\phi^{e_j}=1,\quad e_j\in\mathbb{Z}, 
\end{equation}
and $e=e_1+\dots+e_r$.

Moreover, any decomposition of $e$ as above, can be realized in this way for a given oriented $\mathbb{S}^1$-bundle with Euler number~$e$.

A plumbing graph $(\Gamma,g,e,o)$ is given by a (connected) graph $\Gamma$ (without loops), a \emph{genus} function
$g:V(\Gamma)\to\bz_{\geq 0}$ (where $V(\Gamma)$ is the set of vertices of~$\Gamma$), an \emph{Euler} function
$e:V(\Gamma)\to\bz$ and an \emph{orientation} class $o\in H^1(\Gamma;\bz/2)$. We usually represent
this graph by decorating each vertex $v$ with $[g(v)]$ and $e(v)$, and by decorating each edge~$e$ with a sign~$\sigma_e=\pm$ representing the coefficients of a cocycle (cochain)
representing~$o$. If the decoration $[g(v)]$ is not written it means that
$g(v)=0$, and empty decoration of an edge~$e$ means $+$-decoration.

\begin{obs}\label{obs:coborde}
If we change a  cocycle by reversing the signs of all the edges adjacent to a fixed vertex, we obtain another representative
of~$o$; moreover, we can pass from one representative to another by a sequence of these moves. Of course, if $\Gamma$ is a
tree the $o$-decoration can be chosen as void. 
\end{obs}
The plumbing manifold associated to $(\Gamma,g,e,o)$ is constructed as follows. First, we collect for each
$v\in V(\Gamma)$ an oriented $\mathbb{S}^1$-bundle $\pi_v:M_v\to S_v$ with Euler number $e(v)$ and
such that $S_v$ is a closed oriented surface of genus~$g(v)$.
For each edge $\eta$ with end points $v,w$ we collect two closed disks $D_v^\eta\subset S_v$ and $D_w^\eta\subset S_w$.
We choose these disks such that they are pairwise disjoint for any fixed~$v$. 
Let us define $\check{M}_v$ to be the closure of $M_v\setminus\bigcup_{v\in\eta}\pi_v^{-1}(D_v^\eta)$, which is an oriented
manifold whose boundary is composed by tori, as many as the valency of $v$ in~$\Gamma$. 
We define then $T_v^\eta:=\pi_v^{-1}(\partial D_v^\eta)$. In each one of these tori we have a pair of curves $(\phi_v^\eta,\mu_v^\eta)$,
where $\mu_v^\eta$ is a meridian of the solid torus $\pi_v^{-1}(D_v^\eta)$ (oriented as $\partial D_v^\eta$)
and $\phi_v^\eta$ is an oriented fiber. Note that these curves induce a basis of $H_1(T_v^\eta;\bz)$
which represents the orientation of $T_v^\eta$ as part of the boundary of $\check{M}_v$.

Let us consider a homeomorphism $\Phi_{v,w}^\eta:T_v^\eta\to T_w^\eta$ such that 
$\Phi_{v,w}^\eta(\phi_v^\eta)=(\mu_w^\eta)^{\sigma_\eta}$ and $\Phi_{v,w}^\eta(\mu_v^\eta)=(\phi_w^\eta)^{\sigma_\eta}$. Basically, we are exchanging
sections and fibers (twisted by the sign). This map
is determined up to isotopy by the matrix $\sigma_\eta\left(\begin{smallmatrix}
0&1\\
1&0  
\end{smallmatrix}\right)$ of determinant~$-1$. These maps are well-defined
only up to isotopy and we can choose representatives such that  $\Phi_{w,v}^\eta=(\Phi_{v,w}^\eta)^{-1}$.
Then the plumbing manifold associated to $(\Gamma,g,e,o)$ is defined as:
$$
\left(\coprod_{v\in V(\Gamma)} \check{M}_v\right) \Big/\{\Phi_{v,w}^\eta\}_{\eta}
$$
We will drop any reference to $o$ if it is trivial.

\begin{obs}
Note that the above construction depends on a fixed choice of a cocycle.
Let us fix a vertex~$v$ and consider the cocycle $\tilde{\sigma}$
given by
\[
\tilde{\sigma}_\eta=
\begin{cases}
\sigma_\eta&\text{ if }v\notin\eta\\
-\sigma_\eta&\text{ if }v\in\eta
\end{cases}
\]
For the construction associated to~$\tilde{\sigma}$ we keep the fibrations
for $w\neq v$ and we consider the fibration $\tilde{\pi}_v:M_v\to(-S_v)$
which is the opposite fibration to~$\pi_v$ but the orientation of $M_v$
remains unchanged. As a consequence
$\tilde\phi_v^\eta=(\phi_v^\eta)^{-1}$ and $\tilde\mu_v^\eta=(\mu_v^\eta)^{-1}$, when $v\in\eta$. Note that
$\tilde{\Phi}_{v,w}^\eta=\Phi_{v,w}^\eta$ and the resulting
manifold is the same as above. Hence, by Remark~\ref{obs:coborde},
the manifold depends only on~$o$ and not on the particular choice of
a representative cocycle.
\end{obs}

\begin{ejm}
Let $X$ be a complex surface and let $D=\bigcup_{j=1}^r D_j$ be a normal crossing compact divisor in $X$.
Let $\Gamma$ be the dual graph of~$D$ and define the functions $g,e$ as the genus and self-intersection.
Then the boundary of a regular neighbourhood of~$D$ is homeomorphic to the graph manifold of $(\Gamma,g,e)$.
If the intersection matrix of~$D$ is negative definite then $D$ can be obtained as the exceptional divisor of
a resolution of an isolated surface singularity. That is, the link of an isolated surface singularity is always
a plumbing manifold, whose graph is the dual graph of the resolution.
This example is the main motivation for this work.
\end{ejm}

The rest of the paper is devoted to proof that the construction
of \S\ref{sec:introduction} provides a Heegaard splitting
of the corresponding graph manifold described in this section.

\section{Topological constructions}\label{sec:TopConst}

In this section we introduce different constructions which will be used in the sequel.
% This section is devoted to~\ref{g1} and~\ref{g2}.

\subsection{Drilled bodies}
\mbox{}

\begin{dfn}
A \emph{$(g,n)$-drilled body} is a product $H_{g,n}:=\Sigma_{g,n}\times I$, where $I:=[0,1]$ and
$\Sigma_{g,n}$ is an oriented compact surface of genus~$g$ and $n$~boundary components, with $n>0$.
\end{dfn}

\begin{figure}[ht]
\begin{subfigure}[b]{.3\textwidth}
\begin{tikzpicture}[scale=.31,y=-1cm,x=1cm]
% \draw[help lines,line width=1pt,step=1,dashed] (2,1) grid (13,22);
% \draw[help lines,line width=1pt,step=-1,dashed] (13,22) grid (2,1);
\tikzstyle{flecha}=[-angle 45 new, arrow head=3mm]
% objects at depth 55:
\node at (4.2,3) {$a$};
\node at (11,3) {$a$};
\node at (4,17.1) {$a$};
\node at (12.3,17.3) {$a$};
\node at (8,1.4) {$b$};
\node at (7,6) {$b$};
\node at (8,15.2) {$b$};
\node at (6,20) {$b$};
\node at (2.5,12) {$f$};
\node at (9,12) {$f$};
\node at (5,11) {$f$};
\node at (13.2,11) {$f$};
\draw[flecha,black] (9.8425,10.668) -- (9.8425,10.4775);
\draw[flecha,black] (12.38038,10.43093) -- (12.38038,10.24043);
\draw[flecha,black] (5.71288,11.00243) -- (5.71288,10.81193);
\draw[flecha,black] (3.1623,13.01327) -- (3.1623,12.82277);
\draw[flecha,black] (7.07813,5.06518) -- (7.71313,5.06518);
\draw[flecha,black] (8.79263,2.21827) -- (9.42763,2.21827);
\draw[flecha,black] (8.22113,16.18827) -- (8.85613,16.18827);
\draw[flecha,black] (6.31613,19.04577) -- (6.95113,19.04577);
\draw[flecha,black] (11.2903,17.41593) -- (11.59298,17.07303);
\draw[flecha,black] (4.59105,17.4371) -- (4.89373,17.0942);
\draw[flecha,black] (4.37938,3.71052) -- (4.68207,3.36762);
\draw[flecha,black] (11.08922,3.67877) -- (11.3919,3.33587);

% objects at depth 50:
\draw[thick,black] (12.3825,2.2225) -- (12.3825,16.1925);
\draw[thick,dashed,black] (5.715,2.2225) -- (5.715,16.1925);
\draw[thick,black] (3.175,5.08) -- (3.175,19.05);
\draw[thick,black] (9.84038,5.12233) -- (9.84038,19.09233);
\draw[thick,black] (3.175,5.08) -- (9.8425,5.08) -- (12.3825,2.2225) -- (5.715,2.2225) -- cycle;
\draw[thick,dashed,black] (3.175,19.05) -- (5.715,16.1925) -- (12.3825,16.1925);
\draw[thick,black] (3.175,19.05) -- (9.8425,19.05) -- (12.3825,16.1925);
\end{tikzpicture}%
\caption{($\bs^1)^2\times I=\Sigma_{1,0}\times I$}
\label{fig1:subfig1}
\end{subfigure}
\begin{subfigure}[b]{.3\textwidth}
\begin{tikzpicture}[scale=.31,y=-1cm]
\tikzstyle{flecha}=[-latex new, arrow head=2.1mm]
% objects at depth 60:
\draw[very thick,black] (9.88757,4.92473) +(-44:0.70107) arc (-44:-191:0.70107);
\draw[very thick,black] (9.9045,18.89473) +(-44:0.70107) arc (-44:-191:0.70107);
\draw[thick,black] (5.32286,1.75024) +(99:1.12874) arc (99:23:1.12874);
\draw[thick,dashed,black] (5.32286,15.72024) +(99:1.12874) arc (99:23:1.12874);
\draw[thick,black] (3.83534,4.99116) +(6:0.60476) arc (6:-96:0.60476);
\draw[thick,black] (3.83534,18.96116) +(6:0.60476) arc (6:-96:0.60476);
\draw[thick,black] (3.83534,12.35716) +(6:0.60476) arc (6:-96:0.60476);
\draw[thick,dashed,black] (5.32286,10.98949) +(99:1.12874) arc (99:23:1.12874);
\draw[very thick,black] (9.90238,13.00617) +(-44:0.70107) arc (-44:-191:0.70107);
\draw[thick,black] (11.79552,2.14075) +(92:0.70871) arc (92:175:0.70871);
\draw[thick,dashed,black] (11.77858,16.12768) +(92:0.70871) arc (92:175:0.70871);
\draw[thick,dashed,black] (11.77858,10.39575) +(92:0.70871) arc (92:175:0.70871);
\draw[thick,black] (3.80153,4.39843) -- (3.80153,18.36843);
\draw[thick,black] (10.42247,4.49157) -- (10.42247,18.46157);
\draw[thick,dashed,black] (5.13503,2.88925) -- (5.13503,16.85925);
\draw[thick,dashed,black] (6.34153,2.25425) -- (6.34153,16.22425);
\draw[thick,black] (4.13597,4.47887) -- (4.13597,18.44887);
\draw[thick,black] (9.72397,4.2545) -- (9.72397,18.2245);
\draw[thick,black] (9.19903,5.08) -- (9.19903,19.05);
\draw[thick,dashed,black] (11.08922,2.23943) -- (11.08922,16.20943);
\draw[thick,dashed,black] (11.40672,2.76225) -- (11.40672,16.73225);
\draw[thick,black] (4.43653,5.05037) -- (4.43653,19.02037);

% objects at depth 55:
\draw[flecha,black] (9.18422,10.24678) -- (9.18422,10.05628);
\draw[flecha,black] (10.40553,9.88272) -- (10.40553,9.69222);
\draw[flecha,black] (4.42172,10.53253) -- (4.42172,10.34203);
\draw[flecha,black] (3.76978,9.96103) -- (3.76978,9.77053);
\draw[flecha,black] (6.34153,7.77028) -- (6.34153,7.57978);
\draw[flecha,black] (5.10328,8.67622) -- (5.10328,8.48572);
\draw[flecha,black] (11.07228,7.64328) -- (11.07228,7.45278);
\draw[flecha,black] (11.75597,10.51772) -- (11.75597,10.32722);
\draw[flecha,black] (8.08778,2.21403) -- (8.72278,2.21403);
\draw[flecha,black] (6.56378,19.02672) -- (7.19878,19.02672);
\draw[flecha,black] (7.61153,16.20097) -- (8.24653,16.20097);
\draw[flecha,black] (4.7625,3.33375) -- (5.06518,2.99085);
\draw[flecha,black] (11.30088,3.42265) -- (11.60357,3.07975);
\draw[flecha,black] (11.33263,17.3609) -- (11.63532,17.018);
\draw[flecha,black] (4.74557,17.2339) -- (5.04825,16.891);
\draw[flecha,black] (6.84953,4.73922) -- (7.48453,4.73922);

% objects at depth 50:
\draw[thick,black] (11.75597,2.90618) -- (11.75597,16.87618);
\draw[thick,black] (6.34153,2.21403) -- (11.05747,2.21403);
\draw[thick,black] (4.43653,19.02672) -- (9.15247,19.02672);
\draw[thick,dashed,black] (6.3246,16.20097) -- (11.04053,16.20097);
\draw[thick,black] (3.81,4.42595) -- (5.16043,2.85327);
\draw[thick,black] (10.40553,4.45347) -- (11.75597,2.88078);
\draw[thick,black] (10.43728,18.41923) -- (11.78772,16.84655);
\draw[thick,dashed,black] (3.80153,18.34092) -- (5.15197,16.76823);
\draw[thick,dashed,black] (5.72558,2.78342) -- (5.72558,16.75342);
\draw[thick,black] (4.43653,4.75403) -- (9.15247,4.75403);
\node at (4.1,3) {$a$};
\node at (10.5,3.2) {$a$};
\node at (5.5,17.5) {$a$};
\node at (12,17.7) {$a$};
\node at (8,1.4) {$b$};
\node at (7,6) {$b$};
\node at (8,15.2) {$b$};
\node at (6,20) {$b$};
\end{tikzpicture}%
\caption{$H_{1,1}$}
\label{fig1:subfig2}
\end{subfigure}
\begin{subfigure}[b]{.3\textwidth}
\begin{tikzpicture}[scale=.31,y=-1cm]
 \tikzstyle{flecha}=[-latex new, arrow head=2.1mm]
% objects at depth 60:
\draw[very thick,black] (9.88757,4.92473) +(-44:0.70107) arc (-44:-191:0.70107);
\draw[very thick,black] (9.9045,18.89473) +(-44:0.70107) arc (-44:-191:0.70107);
\draw[thick,black] (5.32286,1.75024) +(99:1.12874) arc (99:23:1.12874);
\draw[thick,dashed,black] (5.32286,15.72024) +(99:1.12874) arc (99:23:1.12874);
\draw[thick,black] (3.83534,4.99116) +(6:0.60476) arc (6:-96:0.60476);
\draw[thick,black] (3.83534,18.96116) +(6:0.60476) arc (6:-96:0.60476);
\draw[thick,black] (3.83534,12.35716) +(6:0.60476) arc (6:-96:0.60476);
\draw[thick,dashed,black] (5.32286,10.98949) +(99:1.12874) arc (99:23:1.12874);
\draw[very thick,black] (9.90238,13.00617) +(-44:0.70107) arc (-44:-191:0.70107);
\draw[thick,black] (11.79552,2.14075) +(92:0.70871) arc (92:175:0.70871);
\draw[thick,dashed,black] (11.77858,16.12768) +(92:0.70871) arc (92:175:0.70871);
\draw[thick,dashed,black] (11.77858,10.39575) +(92:0.70871) arc (92:175:0.70871);
\draw[thick,black] (3.80153,4.39843) -- (3.80153,18.36843);
\draw[thick,black] (10.42247,4.49157) -- (10.42247,18.46157);
\draw[thick,dashed,black] (5.13503,2.88925) -- (5.13503,16.85925);
\draw[thick,dashed,black] (6.34153,2.25425) -- (6.34153,16.22425);
\draw[thick,black] (4.13597,4.47887) -- (4.13597,18.44887);
\draw[thick,black] (9.72397,4.2545) -- (9.72397,18.2245);
\draw[thick,black] (9.19903,5.08) -- (9.19903,19.05);
\draw[thick,dashed,black] (11.08922,2.23943) -- (11.08922,16.20943);
\draw[thick,dashed,black] (11.40672,2.76225) -- (11.40672,16.73225);
\draw[thick,black] (4.43653,5.05037) -- (4.43653,19.02037);
\draw[thick,dashed,black] (6.99558,3.55812) -- (6.99558,17.52812);
\draw[thick,dashed,black] (8.68892,3.49462) -- (8.68892,17.46462);

% objects at depth 55:
\draw[flecha,black] (9.18422,10.24678) -- (9.18422,10.05628);
\draw[flecha,black] (10.40553,9.88272) -- (10.40553,9.69222);
\draw[flecha,black] (4.42172,10.53253) -- (4.42172,10.34203);
\draw[flecha,black] (3.76978,9.96103) -- (3.76978,9.77053);
\draw[flecha,black] (6.34153,7.77028) -- (6.34153,7.57978);
\draw[flecha,black] (5.10328,8.67622) -- (5.10328,8.48572);
\draw[flecha,black] (11.07228,7.64328) -- (11.07228,7.45278);
\draw[flecha,black] (11.75597,10.51772) -- (11.75597,10.32722);
\draw[flecha,black] (8.08778,2.21403) -- (8.72278,2.21403);
\draw[flecha,black] (6.56378,19.02672) -- (7.19878,19.02672);
\draw[flecha,black] (7.61153,16.20097) -- (8.24653,16.20097);
\draw[flecha,black] (4.7625,3.33375) -- (5.06518,2.99085);
\draw[flecha,black] (11.30088,3.42265) -- (11.60357,3.07975);
\draw[flecha,black] (11.33263,17.3609) -- (11.63532,17.018);
\draw[flecha,black] (4.74557,17.2339) -- (5.04825,16.891);
\draw[flecha,black] (6.84953,4.73922) -- (7.48453,4.73922);

% objects at depth 50:
\draw[thick,black] (7.84648,3.57717) ellipse (0.84032cm and 0.508cm);
\draw[thick,dashed,black] (7.84013,17.45403) ellipse (0.84032cm and 0.508cm);
\draw[thick,dashed,black] (7.84225,10.9855) ellipse (0.84032cm and 0.508cm);
\draw[thick,black] (11.75597,2.90618) -- (11.75597,16.87618);
\draw[thick,black] (6.34153,2.21403) -- (11.05747,2.21403);
\draw[thick,black] (4.43653,19.02672) -- (9.15247,19.02672);
\draw[thick,dashed,black] (6.3246,16.20097) -- (11.04053,16.20097);
\draw[thick,black] (3.81,4.42595) -- (5.16043,2.85327);
\draw[thick,black] (10.40553,4.45347) -- (11.75597,2.88078);
\draw[thick,black] (10.43728,18.41923) -- (11.78772,16.84655);
\draw[thick,dashed,black] (3.80153,18.34092) -- (5.15197,16.76823);
\draw[thick,dashed,black] (5.72558,2.78342) -- (5.72558,16.75342);
\draw[thick,black] (4.43653,4.75403) -- (9.15247,4.75403);
\node at (4.1,3) {$a$};
\node at (10.5,3.2) {$a$};
\node at (5.5,17.5) {$a$};
\node at (12,17.7) {$a$};
\node at (8,1.4) {$b$};
\node at (7.5,5.5) {$b$};
\node at (8,15.2) {$b$};
\node at (6,20) {$b$};
\end{tikzpicture}
\caption{$H_{1,2}$}
\label{fig1:subfig3}
\end{subfigure}
\label{fig1:subfigureExample}
\caption{Products}
\end{figure}

\begin{lema}\label{lema-borde}
The boundary of a $(g,n)$-drilled body is
an oriented surface of genus $2g+n-1$ which is decomposed as a union
of two copies of $\Sigma_{g,n}$ and $n$ cylinders, called the \emph{drill holes}.
\end{lema}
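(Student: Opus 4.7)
The plan is to use the product boundary formula together with an Euler characteristic computation. First, since $H_{g,n}=\Sigma_{g,n}\times I$ is a product of two oriented manifolds with boundary, the standard decomposition
\[
\partial(\Sigma_{g,n}\times I)=(\Sigma_{g,n}\times\partial I)\cup(\partial\Sigma_{g,n}\times I)
\]
immediately identifies the pieces: two copies $\Sigma_{g,n}\times\{0\}$ and $\Sigma_{g,n}\times\{1\}$, glued along $\partial\Sigma_{g,n}\times I$, which consists of $n$ cylinders (one for each boundary circle of $\Sigma_{g,n}$). These cylinders are what will be called the drill holes. Orientability of $\partial H_{g,n}$ follows from the orientability of $\Sigma_{g,n}\times I$.

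Next, I would compute the genus by Euler characteristic. Since $\chi(\Sigma_{g,n})=2-2g-n$ and $\chi(\partial\Sigma_{g,n}\times I)=0$, and the overlap $\Sigma_{g,n}\times\partial I\,\cap\,\partial\Sigma_{g,n}\times I=\partial\Sigma_{g,n}\times\partial I$ is a disjoint union of $2n$ circles with Euler characteristic~$0$, inclusion-exclusion gives
\[
\chi(\partial H_{g,n})=2\chi(\Sigma_{g,n})+\chi(\partial\Sigma_{g,n}\times I)-\chi(\partial\Sigma_{g,n}\times\partial I)=2(2-2g-n)=4-4g-2n.
\]
Since $\partial H_{g,n}$ is a closed oriented connected surface (connectedness is clear because $n>0$ ensures the two copies of $\Sigma_{g,n}$ are joined through the drill-hole cylinders), its Euler characteristic equals $2-2h$ where $h$ is its genus; solving $2-2h=4-4g-2n$ yields $h=2g+n-1$, as desired.

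Neither step is really an obstacle: the only point worth care is guaranteeing connectedness of $\partial H_{g,n}$ (so that the Euler-characteristic formula $\chi=2-2h$ applies without ambiguity), which follows directly from the hypothesis $n>0$ together with the connectedness of $\Sigma_{g,n}$. Alternatively, one could avoid computations and argue pictorially: attaching $n$ cylinders to two parallel copies of $\Sigma_{g,n}$ doubles each copy (contributing $2g$ to the genus) and then the $n$ joining tubes add $n-1$ extra handles (the first tube connects the two components, and each subsequent tube adds a handle), giving total genus $2g+n-1$.
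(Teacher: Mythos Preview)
Your proof is correct and follows essentially the same approach as the paper: the same product boundary decomposition $\partial(\Sigma_{g,n}\times I)=(\Sigma_{g,n}\times\{0,1\})\cup(\partial\Sigma_{g,n}\times I)$, the same observation that $n>0$ gives connectedness, and the same Euler-characteristic computation $\chi(\partial H_{g,n})=2\chi(\Sigma_{g,n})=2(2-2g-n)$ to read off the genus. You are merely more explicit about the inclusion-exclusion bookkeeping (noting that the overlap and the cylinder piece both have $\chi=0$), but the argument is the same.
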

\begin{proof}
It is clear that $\partial H_{g,n}$ is an oriented surface for being the boundary of an oriented $3$-manifold.
It can be decomposed as follows:
\[
 \partial H_{g,n}=\Sigma_{g,n}\times\{0,1\} \cup (\partial\Sigma_{g,n} \times I).
\]
Since $n>0$, the surface is connected. Its Euler characteristic is:
\begin{equation*}
\chi( \partial H_{g,n})=2\chi(\Sigma_{g,n})=2(2-2g-n)=2-2(2g+n-1).
\qedhere
\end{equation*}
\end{proof}

\begin{thm}\label{thm-cuerpo-asas}
A $(g,n)$-drilled body is a $(2g+n-1)$-handle body.
\end{thm}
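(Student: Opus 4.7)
The plan is to exhibit an explicit handle decomposition of $H_{g,n}$ by taking a handle decomposition of the base surface $\Sigma_{g,n}$ and crossing it with~$I$.

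First I would recall the standard handle decomposition of a compact oriented surface with nonempty boundary. Since $\Sigma_{g,n}$ has $n\geq 1$ boundary components, it deformation retracts onto a 1-dimensional spine, so it admits a handle decomposition without 2-handles: a single 0-handle (a 2-disk) $D$ together with $k$ 2-dimensional 1-handles (bands) $b_1,\dots,b_k$ attached along $\partial D$. The number $k$ is determined by the Euler characteristic: $\chi(\Sigma_{g,n})=\chi(D)-k=1-k$, and since $\chi(\Sigma_{g,n})=2-2g-n$, we get $k=2g+n-1$.

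Next I would take the product with $I$. The decomposition
\[
\Sigma_{g,n}= D\cup b_1\cup\dots\cup b_k
\]
yields
\[
H_{g,n}=\Sigma_{g,n}\times I = (D\times I)\cup(b_1\times I)\cup\dots\cup(b_k\times I).
\]
Now $D\times I$ is a 3-ball, i.e.\ a 3-dimensional 0-handle. Each band $b_j$ is a rectangle $I\times I$ attached to $D$ along two arcs in $\partial D$; multiplying by $I$ turns $b_j\times I$ into a copy of $I\times I\times I\cong D^1\times D^2$ attached to $\partial(D\times I)$ along $(\partial I)\times I\times I\cong S^0\times D^2$. This is precisely a 3-dimensional 1-handle. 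Thus $H_{g,n}$ is obtained from a 3-ball by attaching $2g+n-1$ 1-handles, i.e.\ it is a handle body of genus $2g+n-1$.

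No real obstacle arises here: the argument is essentially the observation that a drilled body is the product of a surface with boundary and an interval, and the product of a ribbon-graph neighbourhood in a surface with $I$ gives the standard spine-thickening description of a handle body. The genus count matches the one given by Lemma~\ref{lema-borde}, which serves as a consistency check.
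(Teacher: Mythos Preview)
Your proof is correct and is essentially the dual of the paper's argument. The paper works in the opposite direction: it represents $\Sigma_{g,n}$ as an $8g$-gon with $4g$ pairs of edges identified and $n-1$ interior disks removed, then exhibits $2g+(n-1)$ properly embedded disks in $\Sigma_{g,n}\times I$ (the $2g$ identified vertical faces together with $n-1$ products $\alpha_j\times I$ for arcs $\alpha_j$ joining the extra boundary components to the first one) along which cutting yields a $3$-ball. Your $1$-handles are precisely the dual handles to these cutting disks, so the two arguments are equivalent.

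The one practical difference is that the paper's explicit cutting disks are not incidental: their boundaries are exactly the curves $a_i,b_i,c_j$ that form the red cutting system in the Heegaard diagrams constructed in \S\ref{sec:diagramae1} and \S\ref{sec-eulern}. Your handle-decomposition argument establishes the theorem more cleanly, but if you were writing this inside the paper you would still want to extract those explicit disks, since the rest of the construction depends on them.
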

\begin{proof}
We consider $\Sigma_{g,n}$ as the closure of the complement of $n$ pairwise disjoint disks
in a closed surface $\Sigma_g$ of genus~$g$. This surface is represented as
a $4 g$-polygon $P_{4 g}$ with the usual identifications; recall all the vertices are identified
as a point~$P$. The first disk to be removed
can be chosen with center at~$P$. The other $n-1$ disks are in the interior of $P_{4 g}$.

Hence the surface  $\Sigma_{g,n}$ can be seen as an $8g$-gone  $Q_{8g}^{n-1}$
(with $n-1$ removed disks $D_2,\dots,D_n$ in its interior),
with identifications in $4g$ of its edges. Recall that $\partial D_1$ is obtained
by gluing the non-identified edges of $Q_{8g}^{n-1}$.

We can choose $n-1$ disjoint (topological) segments~$\alpha_j$
joining $\partial D_{j}$ and $\partial D_{1}$, $j=2,\dots,n$. Note that if we
cut along
these segments and the identified edges, we obtain a topological disk.

The 3-manifold $H_{g,n}$ can be seen as a drilled prism with basis $Q_{8g}^{n-1}$,
where the vertical faces are identified as the corresponding edges on $Q_{8g}^{n-1}$.

Let us cut $Q_{8g}^{n-1}\times I$ along the $2 g$ identified faces and
the $n-1$ topological disks $\alpha_j\times I$. We obtain the product of a disk and an interval
which is a topological $3$-ball.
\end{proof}

\subsection{Float gluings}
\mbox{}

We are going to define another construction. Let $M$ be an oriented $3$-manifold with boundary
and let $\eta$ be an oriented simple closed curve in $\partial M$; then a regular neighbourhood $C$ in $\partial M$ of $\eta$ is an annulus. Consider an oriented solid torus $V$ with oriented
core $\gamma$ and let $\tilde{\gamma}$ be a longitude in $\partial V$.
Let $A$ be tubular neighbourhood of $\tilde{\gamma}$ in $\partial V$; note that $A$ is an annulus.
Let $\psi:C\to A$ be an orientation-reversing homeomorphism.

\begin{prop}\label{prop-MV}
The manifold $M\cup_\psi V$ is homeomorphic to $M$.
\end{prop}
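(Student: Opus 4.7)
The strategy is to recognize the pair $(V,A)$ as a trivial collar and then absorb that collar into $M$.

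First, I would establish a pair-homeomorphism $(V,A)\cong(A\times I, A\times\{0\})$. Write $V\cong D^2\times\bs^1$ and use $D^2\cong I\times I$ to get $V\cong I\times I\times\bs^1$. Up to isotopy, the longitude $\tilde{\gamma}$ may be taken to be $\{0\}\times\{1/2\}\times\bs^1$, and its annular neighborhood in $\partial V$ can be chosen as the full face $\{0\}\times I\times\bs^1$, which is an annulus naturally identified with $A$. Then the factorization $V=I\times(I\times\bs^1)\cong I\times A$ exhibits $V$ as $A\times I$ with the annulus $A$ itself sitting as $A\times\{0\}$. Composing with $\psi^{-1}$, this identifies $V$ with $C\times[0,1]$ in such a way that $C\subset\partial M$ corresponds to $C\times\{0\}$.

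Second, I would absorb this product into $M$ by a collar argument. By the collar neighborhood theorem, there is a collar $\mathcal{N}\cong\partial M\times[0,\varepsilon]$ of $\partial M$ in $M$; restricting to $C$, we obtain a collar $C\times[0,\varepsilon]\subset M$ with $C\times\{0\}=C$. Inside $M\cup_\psi V$, the subset $V\cup(C\times[0,\varepsilon])$ is then homeomorphic to $C\times[-1,\varepsilon]$ (viewing $V$ as the parameter range $[-1,0]$). A piecewise-linear reparametrization of the interval $[-1,\varepsilon]$ onto $[0,\varepsilon]$ which is the identity near the endpoint $\varepsilon$ induces a homeomorphism $V\cup(C\times[0,\varepsilon])\to C\times[0,\varepsilon]$ that is the identity on the ``inner face'' $C\times\{\varepsilon\}$. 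Extending by the identity on the complement of $\mathcal{N}$ yields a homeomorphism $M\cup_\psi V\to M$.

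The hypothesis that $\psi$ is orientation-reversing is used only to guarantee that $M\cup_\psi V$ inherits a well-defined orientation extending those of $M$ and $V$, so that the constructed homeomorphism can be made orientation-preserving; it plays no role in the topological argument itself. The only substantive step is the identification of the pair $(V,A)$ with the trivial pair $(A\times I,A\times\{0\})$ in the first paragraph, and I do not expect any real obstacle beyond setting up the parametrizations carefully.
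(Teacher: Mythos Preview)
Your proof is correct and follows essentially the same approach as the paper: the paper simply asserts that $V$ retracts onto $A$ and that this retraction induces an isotopy between $M\cup_\psi V$ and $M$, while you spell out the same idea explicitly by identifying $(V,A)\cong(A\times I,A\times\{0\})$ and then absorbing the external collar via the collar neighborhood theorem.
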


\begin{proof}
The solid torus $V$ can be retracted to $A$ and this rectraction induces an isotopy between
$M\cup_\psi V$ and $M$.
\end{proof}

\begin{obs}
 Note that in the previous construction there are two possible choices for the gluing morphism $\psi$. One of them identifies $\gamma$ with $\eta$ and the other one, $\gamma$ with $\eta^{-1}$. Moreover, the gluings of the boundary components of $C$ and $A$ are interchanged.
\end{obs}

\begin{dfn}
 The above operation is called a \emph{float gluing} of $M$ along $C$.
\end{dfn}

% \begin{figure}[ht]
% \centering
% \begin{subfigure}[b]{.3\textwidth}
% \includegraphics[scale=.1]{pegarflotador1.jpg}
% \caption{$M$ and $V$}
% \label{fig:subfig1}
% \end{subfigure}
% \begin{subfigure}[b]{.3\textwidth}
% \includegraphics[scale=.1]{pegarflotador2.jpg}
% \caption{Gluing}
% \label{fig:subfig2}
% \end{subfigure}
% \begin{subfigure}[b]{.3\textwidth}
% \includegraphics[scale=.1]{pegarflotador3.jpg}
% \caption{Isotopy, step 1}
% \label{fig:subfig3}
% \end{subfigure}
% \begin{subfigure}[b]{.3\textwidth}
% \includegraphics[scale=.1]{pegarflotador4.jpg}
% \caption{Isotopy, step 2}
% \label{fig:subfig4}
% \end{subfigure}
% \begin{subfigure}[b]{.3\textwidth}
% \includegraphics[scale=.1]{pegarflotador5.jpg}
% \caption{$M$ again}
% \label{fig:subfig5}
% \end{subfigure}
% \label{fig:subfigureExample}
% \caption{Film of the homeomorphism}
% \end{figure}

% Let us consider a $(g,n)$-drilled body
% and let us fix one of the drill holes.

\begin{dfn}Given a handle-body~$M$ of genus~$g$ we say that a simple closed curve
$\gamma\subset\partial M$ is a \emph{float curve}
if there is a cutting system of curves in $\partial M$ such that $\gamma$ intersects this system in only one point, and this
intersection is transverse.
\end{dfn}
%There is a homeomorphism of $M$ into a standard handle body sending $\gamma$ to a curve that surrounds a handle.

\begin{ejm}
 Let us consider a solid torus $V_1$ and let $\gamma$ be a
simple closed curve in $\partial V_1$ isotopic to the core of $V_1$.
Let $V_{g-1}$ be a handle-body of genus~$g-1$.
Let $V_g$ be the handle-body obtained by gluing two disks in the boundaries of $V_1$ and $V_{g-1}$;
for further use, we will refer to this operation as the \emph{handle sum} of $V_1$ and $V_{g-1}$;
we can assume that $\gamma$ is disjoint with the disk in $\partial V_1$
used for the handle sum. Then $\gamma\subset V_g$ is a \emph{float curve} of $V_g$
since it cuts only the meridian of $V_1$.
\end{ejm}

\begin{dfn}
The pair $(V_g,\gamma)$ is called a \emph{standard float-curve system} of genus~$g$.
\end{dfn}

\begin{lema}
Let $M$ be a handle-body of genus~$g$ and let $\gamma\subset M$
be a float curve. Then, the pair $(M,\gamma)$ is homeomorphic to
a standard float-curve system of genus~$g$.
\end{lema}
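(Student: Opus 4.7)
The plan is to use the cutting system guaranteed by the definition of float curve to reduce $M$ to a solid torus and then reassemble it as a boundary handle sum, matching the standard model.

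First I would fix a cutting system $\{c_1,\dots,c_g\}\subset\partial M$ with the property that $\gamma$ meets $\bigcup_i c_i$ transversely in a single point. By relabeling, assume $\gamma\cdot c_1=1$ and $\gamma\cap c_i=\emptyset$ for $i\geq 2$. Let $D_1,\dots,D_g\subset M$ be the properly embedded disks with $\partial D_i=c_i$; they are disjoint and cutting $M$ along their union yields a $3$-ball, by definition of a cutting system. Because $\gamma\subset\partial M$ and $\partial D_i=c_i$ is disjoint from $\gamma$ for $i\geq 2$, the disks $D_2,\dots,D_g$ can be taken disjoint from $\gamma$.

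Next I would cut $M$ along $D_2,\dots,D_g$ only. Each of these disks is non-separating (each of them, together with the remaining ones, cuts $M$ into a ball, so none can separate), hence each cut drops the genus by one and the result is a handle-body $V$ of genus~$1$, i.e.\ a solid torus. The curve $c_1$ still bounds a disk $D_1$ in $V$ (untouched by the previous cuts), so $c_1$ is a meridian of $V$; since $\gamma\subset\partial V$ meets this meridian transversely at exactly one point, $\gamma$ is a longitude of $V$, and in particular it is isotopic to the core of $V$. This identifies $(V,\gamma)$ with the pair $(V_1,\gamma)$ of the standard model.

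To finish, I would reverse the cutting. Reassembling $M$ from $V$ amounts to gluing a $2$-handle along each pair of trace circles of the cuts $D_2,\dots,D_g$; equivalently, one attaches a genus $g-1$ handle-body $V_{g-1}$ to $V$ along a single disk in $\partial V$. The attaching region can be chosen inside a small disk in $\partial V\setminus\gamma$, since the trace circles of $D_2,\dots,D_g$ all lie in $\partial V\setminus\gamma$ and can be pushed into an arbitrarily small disk by ambient isotopy of $\partial V$ rel $\gamma$ (here we use that the complement in a closed surface of a non-separating simple closed curve is connected). This exhibits $M$ as the handle sum $V_1\#_\partial V_{g-1}$ along a disk disjoint from $\gamma$, which is exactly the standard float-curve system of genus~$g$; the homeomorphism sending $(M,\gamma)$ to the standard model is induced by any homeomorphism between the two genus $g-1$ summands, which exists because handle-bodies of a fixed genus are unique up to homeomorphism.

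The main technical point will be the last step: verifying that the attaching region on $\partial V$ can be isotoped into a disk disjoint from~$\gamma$. This reduces to the standard surface fact that, for a non-separating simple closed curve $\gamma$ on a closed orientable surface, any finite collection of disjoint simple closed curves in the complement can be engulfed in an arbitrarily small disk by an ambient isotopy of the surface rel $\gamma$.
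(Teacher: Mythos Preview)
Your proof is correct and reaches the same conclusion as the paper, but by a slightly different route. The paper cuts along \emph{all} of $D_1,\dots,D_g$, obtaining a $3$-ball with $2g$ marked disks on its boundary and $\gamma$ appearing as an arc joining the two trace disks of $D_1$ while avoiding the others; uniqueness of such a configuration up to homeomorphism then finishes in one line. You instead cut only along $D_2,\dots,D_g$ and keep $\gamma$ as a closed curve on the resulting solid torus, which has the virtue of producing the handle-sum decomposition $V_1\natural V_{g-1}$ explicitly rather than implicitly.

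Two small corrections. First, the regluing along pairs of trace disks is the attachment of $1$-handles (index one in dimension three, attached along $S^0\times D^2$), not $2$-handles. Second, and more importantly, the ``standard surface fact'' you invoke at the end is false in the generality stated: on a closed orientable surface of genus at least~$2$, the complement of a non-separating simple closed curve has positive genus and contains essential simple closed curves, which certainly cannot be engulfed in a disk by any isotopy. What actually rescues your argument is that here $\partial V$ is a \emph{torus}, so $\partial V\setminus\gamma$ is an annulus; the trace disks $D_i^\pm$ are honest embedded disks in $\partial V$ disjoint from the essential curve~$\gamma$, hence they lie in this annulus, and disjoint disks in an annulus can indeed be isotoped rel~$\gamma$ into an arbitrarily small subdisk. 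You should replace the general claim by this specific observation.
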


\begin{proof}
A handle-body can be seen as a closed ball $\mathbb{B}^3$ with $2 g$ pairwise disjoint disks in
the boundary glued in pairs. In this model a curve $\gamma$ is a segment joining a pair of glued disks
and avoiding the other ones. Two such models can be connected by a homeomorphism.
\end{proof}

% \begin{figure}[ht]
% \begin{center}
% \includegraphics{dosasas.pdf}
% \caption{A ball with two handles}
% \end{center}
% \end{figure}

\begin{prop}
\label{float-gluing-handlebodies}
Let $M_1,M_2$ be two handle-bodies of genus $g_1,g_2\geq 1$. Fix float curves $\gamma_1,\gamma_2$ in each one and
consider regular neighbourhoods $A_1,A_2$ of these curves in $\partial M_1,\partial M_2$, respectively.
Let $\psi:A_1\to A_2$ be an orientation-reversing homeomorphism. Then,
$M_1\cup_\psi M_2$ is a handle-body of genus $g_1+g_2-1$.
\end{prop}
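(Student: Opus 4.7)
The plan is to reduce to a model computation using the standardization lemma for float curves proved just above. First, I would replace each pair $(M_i,\gamma_i)$ by a standard float-curve system: express $M_i$ as the handle sum of a solid torus $V_1^{(i)}$ and a handle-body $V_{g_i-1}^{(i)}$ of genus $g_i-1$, arranged so that $\gamma_i\subset\partial V_1^{(i)}$ is isotopic to the core of $V_1^{(i)}$ and is disjoint from the disk used to perform the handle sum. Since $A_i$ is just a regular neighbourhood of $\gamma_i$ in $\partial M_i$, after an ambient isotopy it may be assumed to lie entirely in $\partial V_1^{(i)}$ as a fibred tubular neighbourhood of the core.

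The key local step is then to identify the result of gluing the two solid tori $V_1^{(1)}$ and $V_1^{(2)}$ along $A_1$ and $A_2$ via $\psi$. Writing $V_1^{(i)}\cong \bs^1\times D^2$ with core $\bs^1\times\{0\}$ and $A_i\cong \bs^1\times I_i$ where $I_i$ is an arc in $\partial D^2$, each isotopy class of orientation-reversing identification (distinguished by whether $\psi$ reverses the $\bs^1$-direction or the $I$-direction) realises the quotient as $\bs^1\times(D^2\cup_I D^2)$. Because the union of two disks along an arc on their boundaries is itself a disk, the glued piece is again a solid torus~$V_1$.

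With this local identification in hand, the global conclusion reduces to a handle-count. The manifold $M_1\cup_\psi M_2$ decomposes as the handle sum of $V_{g_1-1}^{(1)}$, the new solid torus $V_1$, and $V_{g_2-1}^{(2)}$, glued along the disks inherited from the original handle-sum decompositions of the $M_i$, which can be taken pairwise disjoint from $A_1$ and $A_2$. Consequently $M_1\cup_\psi M_2$ is a handle-body of genus $(g_1-1)+1+(g_2-1)=g_1+g_2-1$.

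The main obstacle I expect is the local step: one has to check that every orientation-reversing identification of the two annular neighbourhoods, up to the isotopies that extend over the two solid tori, reduces to one of the model cases above, and that in each case the resulting solid torus inherits a product structure compatible with the handle-sum disks. Once this is pinned down, the rest is a direct application of the standardization lemma together with the additivity of genus under handle sums.
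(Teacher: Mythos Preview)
Your approach is correct and close in spirit to the paper's, but the paper is a bit more economical. Rather than standardizing \emph{both} $(M_i,\gamma_i)$ and then analyzing the local gluing of two solid tori, the paper only standardizes $(M_2,\gamma_2)$: write $M_2$ as a handle sum of a solid torus $V_1$ (with $A_2\subset\partial V_1$ and $\gamma_2$ isotopic to its core) and a genus~$g_2-1$ handle-body. Then $M_1\cup_\psi M_2$ is obtained by first gluing $V_1$ to $M_1$ along $A_1$, which is precisely a \emph{float gluing} in the sense of Proposition~\ref{prop-MV}, hence gives back $M_1$; and then performing the handle sum with the remaining genus~$g_2-1$ piece. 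This yields genus $g_1+(g_2-1)$ directly.

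The payoff of the paper's route is that the ``main obstacle'' you flag --- checking that every orientation-reversing annulus identification between the two solid tori gives a solid torus --- simply does not arise: Proposition~\ref{prop-MV} (the retraction of the glued solid torus onto the annulus) already absorbs that case analysis, and it applies regardless of which of the two orientation-reversing identifications $\psi$ realizes. Your explicit $\bs^1\times(D^2\cup_I D^2)$ computation is correct for the case where $\psi$ preserves the $\bs^1$-factor, and the other case can indeed be reduced to it by a self-homeomorphism of one solid torus; but invoking Proposition~\ref{prop-MV} is cleaner and avoids having to check compatibility with the handle-sum disks afterward.
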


\begin{obs}
If $M_2$ is of genus~$1$ the above operation is a particular case of float gluing since we only need
the curve in the solid torus to be a float curve. In fact, the above proposition remains true if we only ask
$\gamma_2$ to be a float curve, but we do not use this more general fact.
\end{obs}

\begin{proof}
Note that $M_2$ can be constructed as a handle sum of a solid torus $V_1$ and a handle body of genus~$g_2-1$.
This operation can be performed in order to have $A_2\subset V_1$ and $\gamma_2$ homotopic to the core of $V_1$.

Then, the gluing of $M_1$ and $M_2$ can be performed as a float gluing followed by a handle sum.
\end{proof}

\begin{obs}\label{obs:handle-1}
In fact, we can be more specific with the handlebody structure of $M:=M_1\cup_\psi M_2$. Consider a system
of cutting curves $\alpha_1,\dots,\alpha_{g_1}$ for $M_1$ and $\beta_1,\dots,\beta_{g_2}$ for $M_2$. We first assume
that only $\alpha_1$ (resp. $\beta_1$) intersects $\gamma_1$ (resp. $\gamma_2$), at only one point and transversally  (which is
possible since $\gamma_1$ and $\gamma_2$ are float curves). Let $\check{\alpha}_1$ be the piece of $\alpha_1$ outside
the small neighbourhood of $\gamma_1$ used for the gluing; define $\check{\beta}_1$ accordingly. We can isotopically
move  $\beta_1$ such that $\delta:=\check{\alpha_1}\cdot\check{\beta}_1$ is a cycle in the boundary of~$M$.
Then, $\delta,\alpha_2,\dots,\alpha_{g_1},\beta_2,\dots,\beta_{g_2}$ is a cutting system for~$M$.
\end{obs}

\begin{obs}\label{obs:handle-vv}
This process can be generalized when $\alpha_i$, $1\leq i\leq h_1$, interesects $\gamma_1$ transversally at one point
and $\alpha_i\cap\gamma_1=\emptyset$ if $h_1<i\leq g_1$ and a similar fact arises for the other system for some $h_2$.
In this case we can choose suitable curves $\alpha'_i$, $2\leq i\leq h_2$
(resp.  $\beta'_i$, $2\leq i\leq h_1$) parallel to $\alpha_1$ (resp. $\beta_1$) such that
$\tilde{\beta}_i:=\check{\alpha}'_i\cdot\check{\beta}_i$, $2\leq i\leq h_2$
(resp. $\tilde{\alpha}_i:=\check{\alpha}_i\cdot\check{\beta}'_i$, $2\leq i\leq h_1$) are cycles.
Then
\begin{equation}
\label{eq:system-vv}
\delta,\tilde{\alpha}_2,\dots,\tilde{\alpha}_{h_1},\alpha_{h_1+1},\dots,\alpha_{g_1},
\tilde{\beta}_2,\dots,\tilde{\beta}_{h_2},\beta_{h_2+1},\dots,\beta_{g_2}
\end{equation}
is a cutting system of~$M$. We can prove it using handle-slide moves
of $\alpha_i$ (resp. $\beta_i$), $1< i\leq h_1$ (resp. $h_2$),
along $\alpha_1$ (resp. $\beta_1$) in order to pass to the situation of Remark~\ref{obs:handle-1}; after the construction
of the cutting system of~$M$ we perform \emph{inverse} handle-slide moves along~$\delta$ and we recover
the system~\eqref{eq:system-vv}.
\end{obs}

The same idea can be used if we identify two different annuli in a single
handle body.

\begin{prop}
\label{float-gluing-una}
Let $M$ be a handle body of genus $g\geq 2$. Fix a cutting system of curves and
two disjoint float curves $\gamma_1,\gamma_2$ such that they intersect different curves
of the cutting system $\alpha_1,\alpha_2$. Consider regular neighbourhoods
$A_1,A_2$ of $\gamma_1$
and $\gamma_2$ respectively. Let $\psi:A_1\to A_2$ be an orientation-reversing
homeomorphism. Then the quotient $M_\psi$ of $M$ by $\psi$ is a handle body of genus $g$.
\end{prop}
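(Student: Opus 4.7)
The strategy is to decompose $M$ as a boundary connected sum, apply Proposition~\ref{float-gluing-handlebodies} to the separated pieces, and then reassemble via a $1$-handle attachment.

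First I would find a decomposition $M = M_1 \natural M_2$, where $M_1$ is a solid torus containing a neighborhood of $\gamma_1$ (so $A_1 \subset \partial M_1$) and $M_2$ is a handle body of genus $g - 1$ containing a neighborhood of $\gamma_2$ (so $A_2 \subset \partial M_2$), and moreover the disk realizing the boundary connected sum is disjoint from $A_1$ and $A_2$. Such a decomposition exists because $\gamma_1$ meets the cutting system only at $\alpha_1$: grouping the cutting disk $D_1$ bounded by $\alpha_1$ (together with a regular neighborhood of $\gamma_1$) into $M_1$, and the remaining $D_2, \ldots, D_g$ (along with $\gamma_2$ and $A_2$) into $M_2$, yields the desired decomposition.

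Next, since $A_1$ and $A_2$ lie in regions disjoint from the decomposing disk, the gluing $\psi$ and the boundary connected sum are identifications in disjoint parts of the manifold, and so can be performed in either order. Undoing the boundary connected sum produces the disjoint union $M_1 \sqcup M_2$; performing the float gluing first on this disjoint union, by Proposition~\ref{float-gluing-handlebodies} we obtain a handle body $N := M_1 \cup_\psi M_2$ of genus $1 + (g-1) - 1 = g - 1$. The manifold $M_\psi$ is then recovered from $N$ by reattaching the decomposing disk, which amounts to attaching a $1$-handle to the connected handle body $N$. This produces a handle body of genus $g$, as desired.

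The main obstacle is arranging the decomposition in the first step so that the decomposing disk can be chosen disjoint from $A_1$ and $A_2$; this requires care, but follows by general position on $\partial M$ together with the fact that each $A_i$ is a small annular neighborhood of $\gamma_i$, which is itself disjoint from the cutting disks other than $D_i$. Alternatively, one could present the proof directly in terms of cutting curves in the spirit of Remark~\ref{obs:handle-1}, by constructing a cutting system for $M_\psi$ consisting of the concatenated curve $\delta := \check{\alpha}_1 \cdot \check{\alpha}_2$, the unchanged curves $\alpha_3, \ldots, \alpha_g$, and an extra curve dual to the identified annulus; the two approaches are essentially equivalent.
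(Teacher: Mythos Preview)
Your proof is correct and follows essentially the same approach as the paper's: both split $M$ into pieces separating $\gamma_1$ and $\gamma_2$, apply Proposition~\ref{float-gluing-handlebodies} to glue the separated pieces (losing one handle), and then observe that the $1$-handle used to reassemble restores the genus. The paper phrases this as viewing $M$ as a handle sum of solid tori with $\gamma_1,\gamma_2$ as longitudes of two of them, so that the identification ``introduces a loop in the chain of handle sums,'' but the underlying argument is the same as yours.
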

\begin{proof}
 Up to homeomorphism, we may assume that the float systems are standard
ones. In that case, $M$ is a handle sum of solid tori, being $\gamma_1$ and
$\gamma_2$ the longitudes of two of them. The identification then produces a
float gluing bewteen these two solid tori, so we obtain again a handle sum
of solid tori, but introducing a loop in the chain of handle sums. This loop
introduces a new handle, that compensates the one lost by the identification.
\end{proof}

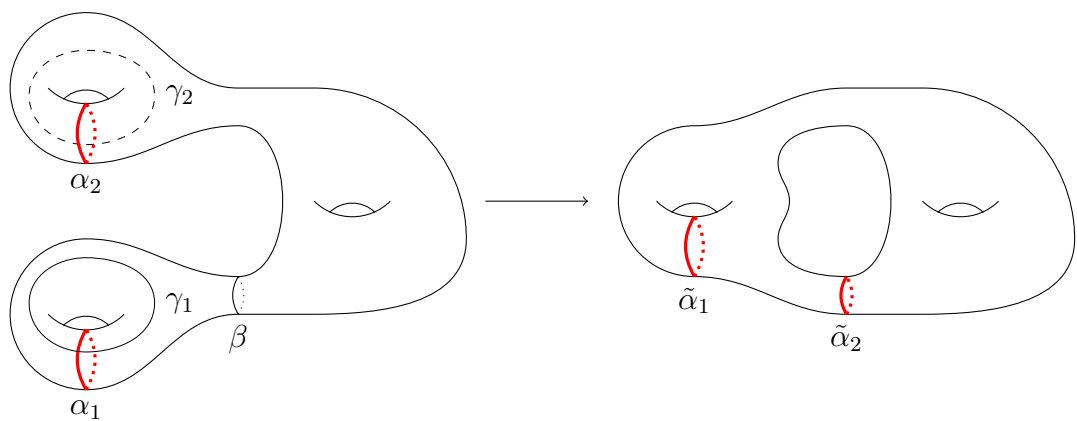
\begin{figure}[ht]

\begin{tikzpicture}[scale=1]

\draw (0,0)
to [out=0, in=180] (2,1)
to [out =0, in=180] (3,1)
to [out=0, in=270] (5,2)
to [out=90, in=0] (3,4)
to [out=180, in=0] (2,4)
to [out=180, in=0] (0,5)
to [out=180, in=90] (-1,4)
to [out=270, in=180] (0,3)
to [out=0, in=180] (2,3.5)
to [out=0, in=0] (2, 1.5)
to [out=180, in=0] (0,2)
to [out=180, in=90] (-1,1)
to [out=270, in=180] cycle;

\def\hole{
    \draw (-0.5, 1)
    to [out=315, in=225] (0.5, 1);

    \draw (-0.3, 0.85)
    to [out=45, in=135] (0.3,0.85);
    }

\hole

\begin{scope}[yshift=3cm]
\hole
\end{scope}
\begin{scope}[xshift=3.5cm, yshift=1.5cm]
\hole
\end{scope}

\draw[red,line width=1.2](0,0) to
[out=120, in=240] (0,0.8);
\draw[red, line width=1.2,dotted](0,0) to
[out=60, in=300]  (0,0.8);
\node[below] at (0,0) {$\alpha_1$};

\draw[line width=.25] (0,0.5) to[out=0, in=-90](.9,1.15) node[right] {$\gamma_1$} to[out=90,in=0](0,1.75)  to[out=180,in=90](-.75,1.15)to[out=-90,in=180] cycle;

\begin{scope}[yshift=2.75cm]
\draw[line width=.25,dashed] (0,0.5) to[out=0, in=-90](.9,1.15) node[right] {$\gamma_2$} to[out=90,in=0](0,1.75)  to[out=180,in=90](-.75,1.15)to[out=-90,in=180] cycle;
\end{scope}

\draw[red, line width=1.2](0,3) to
[out=120, in=240] (0,3.8);
\draw[red, line width=1.2, dotted](0,3) to
[out=60, in=300]  (0,3.8);
\node[below] at (0,3) {$\alpha_2$};
\draw[] (2,1) to
[out=120, in=240] (2,1.5);
\draw[dotted](2,1) to
[out=60, in=300]  (2,1.5);
\node[below] at (2,1) {$\beta$};

\draw[->] (5.25,2.5)--(6.6,2.5) ;

\begin{scope}[xshift=8cm]
\draw (0,1.5)
to [out=0, in=180] (2,1)
to [out =0, in=180] (3,1)
to [out=0, in=270] (5,2)
to [out=90, in=0] (3,4)
to [out=180, in=0] (2,4)
to [out=180, in=0] (0,3.5)
to [out=180, in=90] (-1,2.5)
to [out=270, in=180] cycle;

\draw(1.25,2.5)
to [out=90, in=270] (1.1,3)
to [out=90, in=180] (2,3.5)
to [out=0, in=0] (2, 1.5)
to [out=180, in=270] (1.1, 2)
to [out=90, in=270]
cycle;
\begin{scope}[yshift=1.5cm]
\hole
\end{scope}
\begin{scope}[xshift=3.5cm, yshift=1.5cm]
\hole
\end{scope}

\draw[red, line width=1.2](0,1.5) to
[out=120, in=240] (0,2.3);
\draw[red, line width=1.2, dotted](0,1.5) to
[out=60, in=300]  (0,2.3);
\node[below] at (0,1.5) {$\tilde{\alpha}_1$};
\draw[red, line width=1.2](2,1) to
[out=120, in=240] (2,1.5);
\draw[red, line width=1.2, dotted](2,1) to
[out=60, in=300]  (2,1.5);
\node[below] at (2,1) {$\tilde{\alpha}_2$};
\end{scope}

\end{tikzpicture}

\caption{Gluing disjoint float curves in a handle body\label{fig:flotador-ciclo}}
\end{figure}

\begin{obs}\label{obs:cutting-ciclo}
In Figure~\ref{fig:flotador-ciclo} it can be seen how a new cutting curve is
obtained by joining the two identified ones, and another one appears for the
handle corresponding to the cycle. We are going
to check that the latter corresponds to the commutator of
$\alpha_1$ and $\gamma_1$.

Let $F=\partial{M}$ and consider regular neighbourhoods $N(\gamma_1)$ and $N(\gamma_2)$
of $\gamma_1$ and $\gamma_2$ bounded by four curves $\gamma_i^\pm$. The surface
$F_\psi:=\partial{M_\psi}$ is obtained as follows. Consider
the quotient of $\overline{F\setminus(N(\gamma_1)\cup N(\gamma_2))}$ obtained by gluing
$\gamma_1^+$ with $\gamma_2^-$ and $\gamma_1^-$ with $\gamma_2^+$
in order to obtain an oriented 3-manifold.

Note that $F=\partial{M}$ and $F_\psi=\partial{M_\psi}$ are equal outside regular neighbourhoods
of $\gamma_1$ and $\gamma_2$. A cutting system for $M_\psi$ can be constructed as follows.
We keep the curves $\alpha_3,\dots,\alpha_g$ of the cutting system of $M$ and we add two new
curves $\tilde{\alpha}_1$ and~$\bar{\alpha}_2$. The curve $\bar{\alpha}_1$ is the connected sum
$\alpha_1\#\alpha_2$ obtained as the union of two pieces
$\check{\alpha}_1,\check{\alpha}_2$
as $\delta$ in Remark~\ref{obs:handle-1}.
The curve $\bar{\alpha}_2$ is the image by the gluing of the curve 
$\beta$ in $M$,which is the commutator
of $\alpha_1$ and $\gamma_1$ (see Figure~\ref{fig:flotador-ciclo}). Note that
the commutator of $\alpha_2$ and $\gamma_2$ could also be chosen instead
of~$\beta$.
\end{obs}

\section{\texorpdfstring{Heegaard splittings of $\bs^1$-bundles over  surfaces with unimodular Euler number}%
{Heegaard decomposition of S1-bundles over  surfaces with unimodular Euler number}}
\label{sec-euler1}
Let $\pi:M\to S$ be an oriented $\bs^1$-bundle 
over a closed oriented  surface $S$ of genus~$g$, 
with Euler number $e\in\bz\equiv H^2(S;\bz)$. 
Consider a small closed disk $D\subseteq S$ 
and consider the surface with boundary $\check{S}:=\overline{S\setminus D}$. 
Since $H^2(\check{S};\bz)$ is trivial, there exists a section $s_1:\check{S}\to M$ of $\pi$. 
We take another \emph{parallel} section $s_2$. 
These two sections divide $\check{M}=\pi^{-1}(\check{S})$ in two pieces $M_1$ and $M_2$;
which are oriented compact $3$-manifolds with boundary,
and satisfy that $M_1\cap M_2=\partial M_1\cap \partial M_2=\gs\coprod\gn$, where $\gs:=s_1(\check{S})$ and $\gn:=s_2(\check{S})$.
% Let us recall how the Euler number is defined. Let $\sigma_1$ be the boundary of
% the image of $s_1$ (with the orientation inherited from $\check{S}$), let $\mu_1$
% be a meridian of the solid torus~$\pi^{-1}(D)$  and let $\phi$ be an oriented fiber.
% Then, in $H_1(\pi^{-1}(\partial D);\bz)$, the following relation (with multiplicative notation)
% holds: $\sigma_1\cdot\mu_1\cdot\phi^{e}=1$, where~$e$ is the Euler number.
We will now show how to use these two pieces to construct a Heegaard splitting of $M$
when the Euler number of the fibration is $e=\pm 1$ ( i.e., the plumbing manifold
associated with a graph  with only one vertex~$v$, $g_v=g$, $e_v=\pm 1$).

\begin{cvt}
Once the two sections $s_1,s_2$ have been chosen, we choose $M_1$ and $M_2$
in such a way that the orientations on $\gn$ induced by $M_1$  and $s_2$ coincide.
This means that a positive half-fiber inside $M_1$ goes from $\gs$ to $\gn$.
\end{cvt}

The boundary of $M_1$ is obtained by gluing $\gs$ and $\gn$ with an
annulus $C$ which fibers over $\partial D=\partial\check S$ (whose fibers
are positive half-fibers inside $M_1$ homeomorphic to $[0,1]$). In the same
way $\partial M_2=\gs\cup C'\cup\gn$, where $C'$ is the other annulus in $M_2$.
Note that $C\cup C'$ is the torus $\pi^{-1}(\partial(D))=\partial\pi^{-1}(D)$ ($C$ and $C'$
have common boundaries).

% \begin{lema}\label{lema-suma-conexa}
% The 2-manifold $\Sigma_1:=\partial M_1$ is a  surface of genus $2g$. 
% \end{lema}
% \begin{proof}
% It is clear that $\Sigma_1$ is an orientable surface for being the boundary of an oriented $3$-manifold. 
% It can be decomposed as follows:
% \[
%  \Sigma_1=s_1(\check{S}) \cup (\partial D\times I)  \cup s_2(\check{S})
% \]
% where the gluings are along the circles $\partial D \times \{0\}$ and $\partial D \times \{1\}$ respectively. 
% That is, $\Sigma_1$ has been constructed as the connected sum of two  surfaces of genus $g$.
% \end{proof}

\begin{prop}\label{prop-cuerpo-asas-2g}
The $3$-manifolds $M_1,M_2$ are $2g$-handle bodies.
\end{prop}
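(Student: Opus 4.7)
The plan is to identify each $M_i$ with a $(g,1)$-drilled body $H_{g,1}$ and then invoke Theorem~\ref{thm-cuerpo-asas}.

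First, observe that $\check S$ is a compact oriented surface of genus~$g$ with one boundary component, so $H^2(\check S;\bz)=0$ and hence the restricted oriented $\bs^1$-bundle $\check M:=\pi^{-1}(\check S)\to\check S$ is classified by the trivial Euler class. Thus there is a bundle trivialization $\check M\cong\check S\times\bs^1$. The section $s_1$ can be taken to be $\check S\times\{p_1\}$ for some $p_1\in\bs^1$, and after an isotopy through parallel sections (which does not change the decomposition of $\check M$ up to homeomorphism) we may also assume $s_2=\check S\times\{p_2\}$ for a distinct point $p_2\in\bs^1$.

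Now the two points $p_1,p_2\in\bs^1$ split $\bs^1$ into two arcs $I_1,I_2\cong[0,1]$, and consequently $\check M$ is split by $\gs\cup\gn$ into the two pieces
\[
M_i\cong \check S\times I_i\cong \Sigma_{g,1}\times I=H_{g,1},\qquad i=1,2,
\]
which is precisely the $(g,1)$-drilled body. The convention on orientations only fixes which of the two arcs is labelled $I_1$ (the one traversed by a positive half-fiber from $\gs$ to $\gn$), and the annulus $C$ (resp.~$C'$) corresponds to the subset $\partial \check S\times I_i$, i.e.~to the single drill hole of $H_{g,1}$.

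Finally, by Theorem~\ref{thm-cuerpo-asas}, $H_{g,1}$ is a handle body of genus $2g+1-1=2g$, so both $M_1$ and $M_2$ are $2g$-handle bodies. There is no real obstacle: the only point requiring care is the triviality of the restricted bundle, which holds because $\check S$ is homotopy equivalent to a wedge of circles, forcing the Euler class (living in $H^2(\check S;\bz)=0$) to vanish.
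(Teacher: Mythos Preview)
Your proof is correct and follows exactly the same approach as the paper: identify each $M_i$ with the drilled body $H_{g,1}=\Sigma_{g,1}\times I$ and then invoke Theorem~\ref{thm-cuerpo-asas} to conclude it is a $2g$-handle body. You have simply spelled out in more detail what the paper summarizes as ``by construction'' (the triviality of the restricted bundle over $\check S$ and the identification of $M_i$ with $\check S\times I_i$), which the paper had already set up in the paragraph preceding the proposition.
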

\begin{proof}
This manifold is, by construction, the drilled body $H_{g,1}$,
see Theorem~\ref{thm-cuerpo-asas}.
Since $M_2$ is homeomorphic to $M_1$, it is also a $2g$-handle body.
%the product of an interval $I$ and a  surface minus a small disk. 
% The  surface~$S$ can be seen as a fundamental $4g$-gone~$P_{4g}$ where the sides are identified in the usual way.
% With these identifications all the vertices correspond to the same point~$p$. 
% We choose the disk $D$ centered at~$p$, i.e., it is the union of $4g$ disk sectors.
% 
% The surface $\check{S}$ can be seen as an $8g$-gone~$Q_{8g}$ where $4g$ sides are identified as before and
% the other $4g$ (not identified) glue to form $\partial D=-\partial\check{S}$.
% 
% 
% In this setting, the 3-manifold $M_1$ can be seen as a prysm with basis~$Q_{8g}$, 
% where the side faces are identified in the same pattern as before.
% 
% If we cut along the identified faces (which are clearly topological disks),
%  we obtain the same prysm with no identifications at all  which is clearly a topological $3$-ball.
\end{proof}

% \begin{lema}
% Let $\tilde{M}_2:=M_2\cup\pi^{-1}(D)$.
% The $3$-manifold $\tilde{M}_2$ is oriented, with boundary homeomorphic to a  surface of genus~$2g$.
% Moreover, $\pi_1(\tilde{M}_2)\cong\bfff_{2g}*\bz/|e|\bz$.
% \end{lema}
% 
% \begin{proof}
% The $3$-manifold $\tilde{M}_2$ is a submanifold of the oriented $3$-manifold~$M$ and, hence it is oriented.
% It is obtained gluing two $3$-manifolds along part of their boundaries; the  boundary of $\pi^{-1}(D)$
% is a torus which is decomposed as the union of two annuli; one of them is used for the gluing and the other
% one remains as part of the boundary of $\tilde{M}_2$. This construction provides a homeomorphism of
% $\partial{M_2}$ y $\partial\tilde{M}_2$.
% 
% The statement about fundamental groups follows from Seifert-van Kampen Theorem.
% \end{proof}
% 
% Note that $\tilde{M_2}$ can be a handle body only if $e=\pm 1$.

\begin{thm}\label{thm-pegado-toro1}
%If  $e=\pm 1$ then 
Let $\tilde{M}_2:=M_2\cup\pi^{-1}(D)$. The manifold $\tilde{M_2}$ is homeomorphic to $M_2$ and, hence, it is a $2g$-handle body.
\end{thm}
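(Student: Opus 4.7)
The proof I propose applies the float-gluing formalism of Proposition~\ref{prop-MV}: I view $\tilde M_2$ as obtained from $M_2$ by attaching the solid torus $V := \pi^{-1}(D)$ along the annulus $C' \subset \partial M_2 \cap \partial V$, and verify that $C'$ is an annular neighborhood of a longitude of $V$, so that Proposition~\ref{prop-MV} yields $\tilde M_2 \cong M_2$ directly.

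The boundary of $V$ is the torus $T := \pi^{-1}(\partial D) = C \cup C'$, with $C \cap C' = s_1(\partial D) \sqcup s_2(\partial D)$; the annulus $C'$ lies in $\partial M_2$, swept out by the positive half-fibers of $M_2$ over $\partial D$. The inclusions $M_2, V \hookrightarrow M$ identify the two copies of $C'$ by a homeomorphism which is orientation reversing on the annulus, because $M_2$ and $V$ sit on opposite sides of $C'$ inside the oriented manifold $M$. Hence $\tilde M_2$ is indeed a float gluing in the sense of \S\ref{sec:TopConst}, provided that $C'$ surrounds a longitude of $V$.

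This last check is the heart of the argument, and is precisely where the hypothesis $|e| = 1$ enters. The core $\gamma$ of $C'$ projects onto $\partial D$ with degree $\pm 1$, hence is homologous in $T$ to $\pm s_1|_{\partial D}$. The relation~\eqref{eq:euler}, $s_1 \cdot \mu \cdot \phi^e = 1$ in $H_1(T;\bz)$, implies that the algebraic intersection of $s_1|_{\partial D}$ with the meridian $\mu$ of $V$ equals $\pm e$. Since $|e| = 1$, this intersection number is $\pm 1$, so $\gamma$ is a longitude of $V$ and $C'$ is an annular neighborhood of $\gamma$ in $\partial V$. Proposition~\ref{prop-MV} then gives $\tilde M_2 \cong M_2$, and the final assertion that $\tilde M_2$ is a $2g$-handle body follows from Proposition~\ref{prop-cuerpo-asas-2g}.

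The main obstacle I anticipate is not the computation but the orientation bookkeeping: one must confirm that the identification of $C'$ induced by the inclusions $M_2, V \hookrightarrow M$ actually agrees with the orientation-reversing gluing required by Proposition~\ref{prop-MV}, and that the basis $(\mu, \phi)$ used in~\eqref{eq:euler} is consistent with the orientation of $T$ as the boundary of $V$ (rather than as the boundary of $\check{M}_2$). Once these orientation conventions are pinned down, the rest of the argument is immediate.
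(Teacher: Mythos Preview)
Your proof is correct and follows essentially the same route as the paper: identify $C'$ as the gluing annulus, use the hypothesis $|e|=1$ to see that its core is a longitude of the solid torus $\pi^{-1}(D)$, and invoke Proposition~\ref{prop-MV}. Your version is more explicit about the intersection-number computation and the orientation bookkeeping, but the underlying argument is the same.
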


\begin{proof}
Note that $C'$ is the annulus along which $M_2$ and $\pi^{-1}(D)$ are glued. Let $K$ be the core
of this annulus. Since $e=\pm 1$, $K$ is homologous to the core of $\pi^{-1}(D)$ and the statement follows
from Proposition~\ref{prop-MV}. 
\end{proof}

\begin{cor}
The submanifolds $M_1$ and $\tilde{M_2}$ form a Heegaard splitting of $M$ of genus~$2g$.
\end{cor}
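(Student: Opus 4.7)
The plan is essentially verification: the hard lifting has been done by Proposition~\ref{prop-cuerpo-asas-2g} and Theorem~\ref{thm-pegado-toro1}, so what remains is to check that the pieces assemble into a genuine Heegaard splitting, i.e.\ that $M = M_1 \cup \tilde M_2$, that $M_1 \cap \tilde M_2$ is a common closed oriented surface, and that this surface has genus $2g$.

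First I would write $M = \check M \cup \pi^{-1}(D) = (M_1 \cup M_2) \cup \pi^{-1}(D) = M_1 \cup (M_2 \cup \pi^{-1}(D)) = M_1 \cup \tilde M_2$, using that $M_1$ is disjoint from the interior of $\pi^{-1}(D)$ (since $M_1 \subset \pi^{-1}(\check S)$ and $M_1 \cap \pi^{-1}(\partial D) = C$, the annulus described just before Proposition~\ref{prop-cuerpo-asas-2g}). This already shows that the two pieces cover $M$.

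Next I would compute the intersection. By construction $M_1 \cap M_2 = \gs \cup \gn$, and $M_1 \cap \pi^{-1}(D) = C$, the annulus on $\partial\pi^{-1}(D)$ along which $M_1$ meets the solid torus. Therefore
\[
M_1 \cap \tilde M_2 \;=\; (M_1 \cap M_2) \cup (M_1 \cap \pi^{-1}(D)) \;=\; \gs \cup \gn \cup C \;=\; \partial M_1,
\]
the last equality by the description of $\partial M_1$ given before Proposition~\ref{prop-cuerpo-asas-2g}. Symmetrically, since $\tilde M_2$ is a $3$-manifold with boundary and $M_1, \tilde M_2$ together cover the closed manifold $M$, this common intersection is also $\partial \tilde M_2$; so $\Sigma := \partial M_1 = \partial \tilde M_2$ is a closed oriented surface.

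Finally, by Proposition~\ref{prop-cuerpo-asas-2g}, $M_1 \cong H_{g,1}$ is a $2g$-handle body, and by Theorem~\ref{thm-pegado-toro1} (which uses $e = \pm 1$ essentially), $\tilde M_2$ is also a $2g$-handle body. Lemma~\ref{lema-borde} applied with $n=1$ tells us that $\partial H_{g,1}$ has genus $2g + 1 - 1 = 2g$, which matches the genus of the boundary of a $2g$-handle body as required. Thus $(M_1,\tilde M_2,\Sigma)$ is a Heegaard splitting of $M$ of genus~$2g$. The only step that required real input was Theorem~\ref{thm-pegado-toro1}, where the hypothesis $e = \pm 1$ was used to guarantee that attaching the solid torus $\pi^{-1}(D)$ to $M_2$ along $C$ is a float gluing (the core $K$ of $C$ being homologous to the core of $\pi^{-1}(D)$), so I do not anticipate any further obstacle.
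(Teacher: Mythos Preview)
Your proposal is correct and follows exactly the implicit reasoning behind the paper's statement: the paper presents this result as an immediate corollary of Proposition~\ref{prop-cuerpo-asas-2g} and Theorem~\ref{thm-pegado-toro1} with no separate proof, and your verification that $M=M_1\cup\tilde M_2$, that the intersection is the common boundary $\gs\cup\gn\cup C=\partial M_1=\partial\tilde M_2$, and that Lemma~\ref{lema-borde} gives genus $2g$, spells out precisely the routine check the authors leave to the reader.
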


\section{\texorpdfstring{Heegaard diagram of a unimodular 
$\bs^1$-bundle.}{Heegaard diagram of a unimodular S1-bundle.}}
\label{sec:diagramae1}
\mbox{}

Let us denote $\Sigma_1:=\partial M_1=\partial\tilde{M}_2$ (oriented as
boundary of~$M_1$),
%(in Lemma~\ref{lema-suma-conexa}) 
which is the gluing of $\gs$,
$\gn$ and the cylinder~$C\cong\partial D\times I$. 
% Since $\Sigma_1$ 
% is oriented as $\partial M_1$, 
Note that $\gn$ inherits
the orientation of $S$ while $\gs$ inherits the opposite one.

\begin{figure}[ht]
\centering
\definecolor{cff0000}{RGB}{255,0,0}
\definecolor{c0000ff}{RGB}{0,0,255}
\begin{subfigure}[b]{.55\textwidth}
\begin{tikzpicture}[y=0.80pt, x=0.8pt,yscale=-1,xscale=1.2, inner sep=0pt, outer 
sep=0pt,
vertice/.style={draw,circle,fill,minimum size=0.15cm,inner sep=0}]
\tikzset{flecha1/.style={decoration={
  markings,
  mark=at position #1 with  {\arrowreversed{angle 90 
new}}},postaction={decorate}}}
\tikzset{flecha2/.style={decoration={
  markings,
  mark=at position #1 with  {\arrow{angle 90 new}}},postaction={decorate}}}
%\begin{scope}[shift={(0,-702.36215)}]% layer1
  % path3775
\path[draw=black,line join=miter,line cap=butt,miter limit=4.00,line 
width=0.960pt] (147.6849,800.9034) .. controls (165.4973,790.4617) and 
(156.8982,738.2531) .. (185.7665,740.0957) .. controls (214.6348,741.9384) and 
(210.3353,792.9186) .. (221.3912,797.8323);
  % path3777
\path[draw=black,line join=miter,line cap=butt,miter limit=4.00,line 
width=0.960pt] (185.1523,799.0607) .. controls (168.5684,793.5328) and 
(176.5532,760.3649) .. (186.9950,759.7507) .. controls (197.4367,759.1365) and 
(208.4926,788.0048) .. (194.9798,797.2181);
  % path3779
\path[draw=black,line join=miter,line cap=butt,miter limit=4.00,line 
width=0.960pt] (159.9693,776.9489) .. controls (120.0451,783.7053) and 
(91.1768,824.8579) .. (153.2129,862.9395);
  % path3781
\path[draw=black,line join=miter,line cap=butt,miter limit=4.00,line 
width=0.960pt] (176.5532,775.7204) .. controls (183.3096,775.7204) and 
(193.1372,775.1062) .. (200.5078,776.9489);
  % path3783
\path[draw=black,line join=miter,line cap=butt,miter limit=4.00,line 
width=0.960pt] (212.1780,776.3346) .. controls (271.1430,792.9186) and 
(268.6861,833.4570) .. (216.4775,864.1680);
  % path3785
\path[draw=black,line join=miter,line cap=butt,miter limit=4.00,line 
width=0.960pt] (147.6849,976.7127) .. controls (165.4973,987.1544) and 
(156.8982,1039.3630) .. (185.7665,1037.5203) .. controls (214.6348,1035.6777) 
and (210.3353,984.6976) .. (221.3912,979.7838);
  % path3787
\path[draw=black,line join=miter,line cap=butt,miter limit=4.00,line 
width=0.960pt] (185.1523,978.5553) .. controls (168.5684,984.0833) and 
(176.5532,1017.2511) .. (186.9950,1017.8653) .. controls (197.4367,1018.4795) 
and (208.4926,989.6112) .. (194.9798,980.3980);
  % path3789
\path[draw=black,line join=miter,line cap=butt,miter limit=4.00,line 
width=0.960pt] (159.9693,1000.6672) .. controls (120.0451,993.9108) and 
(91.1768,952.7581) .. (153.2129,914.6766);
  % path3791
\path[draw=black,line join=miter,line cap=butt,miter limit=4.00,line 
width=0.960pt] (176.5532,1001.8956) .. controls (183.3096,1001.8956) and 
(193.1372,1002.5098) .. (200.5078,1000.6672);
  % path3793
\path[draw=black,line join=miter,line cap=butt,miter limit=4.00,line 
width=0.960pt] (212.1780,1001.2814) .. controls (271.1430,984.6975) and 
(268.6861,944.1591) .. (216.4775,913.4481);
  % path3795
\path[draw=black,line join=miter,line cap=butt,miter limit=4.00,line 
width=0.960pt] (144.6138,846.9698) .. controls (163.6546,862.9395) and 
(155.0556,912.0771) .. (150.1418,924.9757);
  % path3797
\path[draw=black,line join=miter,line cap=butt,miter limit=4.00,line 
width=0.960pt] (224.7967,850.0409) .. controls (205.7559,866.0106) and 
(214.3550,915.1482) .. (219.2687,928.0468);
  % path3799
\path[flecha1=.15,flecha1=.5,flecha1=.75,draw=cff0000,line join=miter,line 
cap=butt,miter limit=4.00,line width=0.432pt] (156.2840,784.3195) .. controls 
(134.7863,797.2181) and (142.7712,810.1167) .. (146.4565,824.8579) .. controls 
(150.1418,839.5992) and (173.2161,853.7381) .. (171.6395,894.8789) .. controls 
(169.7320,944.6537) and (145.8423,941.9110) .. (143.3854,964.2857) .. controls 
(141.1730,984.4338) and (153.2129,990.0829) .. (157.5124,993.1540);
\node[vertice] (P1) at (168,865) {};
\node[above=7pt] at (P1.north) {$p_1^1$};

  % path3801
\path[flecha2=.5,draw=cff0000,line join=miter,line cap=butt,miter 
limit=4.00,line width=0.432pt] (174.7106,781.2484) .. controls 
(189.4518,781.8626) and (186.8746,796.8672) .. (185.7665,816.2589) .. controls 
(184.5381,837.7566) and (190.6803,836.5281) .. (191.2945,894.2647) .. controls 
(191.9087,952.0013) and (188.8376,967.9710) .. (188.2234,977.7985) .. controls 
(187.6092,987.6260) and (183.9239,990.6971) .. (175.9390,994.9966);

\node[vertice] (P2) at (190.5,866) {};
\node[above=8pt] at (P2.east) {$p_1^2$};
\node[vertice] (P3) at (168,920) {};
\node[above=7pt] at (P3.west) {$q_1^1$};
\node[vertice] (P4) at (191,920) {};
\node[below right=2pt] at (P4.east) {$q_1^2$};

  % path3803
\path[flecha1=.5,draw=cff0000,line join=miter,line cap=butt,miter 
limit=4.00,line width=0.432pt] (177.1675,1021.4080) .. controls 
(191.2945,1033.0782) and (206.0358,1010.9663) .. (207.8784,994.9967) .. controls 
(209.7211,979.0270) and (206.0358,958.1435) .. (204.1931,889.3510) .. controls 
(202.3504,820.5584) and (210.3353,797.8323) .. (209.7211,791.6901) .. controls 
(209.1069,785.5479) and (205.4215,750.5374) .. (186.3807,749.9232) .. controls 
(167.3400,749.3090) and (164.8831,780.6342) .. (164.2689,795.9897) .. controls 
(163.6546,811.3451) and (181.4670,839.5992) .. (180.2386,890.5794) .. controls 
(179.0101,941.5596) and (163.0404,975.3417) .. (166.1115,990.6972) .. controls 
(169.1826,1006.0526) and (177.1675,1021.4080) .. (177.1675,1021.4080) -- cycle;
  % text3867
\node at (155,761.11987) {$b_1$};
  % text3871
\node at (140,823.56549) {$\nu_1$};
\node at (164,880) {$\lambda_1^1$};
\node at (140,950) {$\nu'_1$};
\node at (198,875) {$\lambda_1^2$};
\node at (229,875) {$a_1=$};
\node at (250,885) {$\nu_1\cdot\lambda_1^2\cdot\nu'_1\cdot\lambda_1^1$};
%\draw[thick] (156,900) ellipse (91 and 35);
\draw[thick,flecha1=.5] (153,916) arc (180:0:31.5 and 4);
\draw[thick,dashed] (153,916) arc (-180:0:31.5 and 4);
\node at (150,930) {$\gamma'_1$};
\node at (250,930) {$\gs$};
\node at (250,845) {$\gn$};
\node at (145,895) {$C$};
\draw[thick,flecha2=.5] (153,862) arc (180:0:31.5 and 4);
\draw[thick,dashed] (153,862) arc (-180:0:31.5 and 4);
\node at (150,873) {$\gamma_1$};

%\end{scope}
\end{tikzpicture}\caption{Cutting curves for $M_1$.}
\label{fig:cutting1}
\end{subfigure}
\hfil
\begin{subfigure}[b]{.43\textwidth}
\begin{tikzpicture}[y=0.80pt, x=0.8pt,yscale=-1,xscale=1.2, inner sep=0pt, outer 
sep=0pt,vertice/.style={draw,circle,fill,minimum size=0.15cm,inner sep=0}]
\tikzset{flecha1/.style={decoration={
  markings,
  mark=at position #1 with  {\arrowreversed{angle 90 
new}}},postaction={decorate}}}
\tikzset{flecha2/.style={decoration={
  markings,
  mark=at position #1 with  {\arrow{angle 90 new}}},postaction={decorate}}}
\path[draw=black,line join=miter,line cap=butt,miter limit=4.00,line 
width=0.960pt] (147.6849,800.9034) .. controls (165.4973,790.4617) and 
(156.8982,738.2531) .. (185.7665,740.0957) .. controls (214.6348,741.9384) and 
(210.3353,792.9186) .. (221.3912,797.8323);
  % path3777
\path[draw=black,line join=miter,line cap=butt,miter limit=4.00,line 
width=0.960pt] (185.1523,799.0607) .. controls (168.5684,793.5328) and 
(176.5532,760.3649) .. (186.9950,759.7507) .. controls (197.4367,759.1365) and 
(208.4926,788.0048) .. (194.9798,797.2181);
  % path3779
\path[draw=black,line join=miter,line cap=butt,miter limit=4.00,line 
width=0.960pt] (159.9693,776.9489) .. controls (120.0451,783.7053) and 
(91.1768,824.8579) .. (153.2129,862.9395);
  % path3781
\path[draw=black,line join=miter,line cap=butt,miter limit=4.00,line 
width=0.960pt] (176.5532,775.7204) .. controls (183.3096,775.7204) and 
(193.1372,775.1062) .. (200.5078,776.9489);
  % path3783
\path[draw=black,line join=miter,line cap=butt,miter limit=4.00,line 
width=0.960pt] (212.1780,776.3346) .. controls (271.1430,792.9186) and 
(268.6861,833.4570) .. (216.4775,864.1680);
  % path3785
\path[draw=black,line join=miter,line cap=butt,miter limit=4.00,line 
width=0.960pt] (147.6849,976.7127) .. controls (165.4973,987.1544) and 
(156.8982,1039.3630) .. (185.7665,1037.5203) .. controls (214.6348,1035.6777) 
and (210.3353,984.6976) .. (221.3912,979.7838);
  % path3787
\path[draw=black,line join=miter,line cap=butt,miter limit=4.00,line 
width=0.960pt] (185.1523,978.5553) .. controls (168.5684,984.0833) and 
(176.5532,1017.2511) .. (186.9950,1017.8653) .. controls (197.4367,1018.4795) 
and (208.4926,989.6112) .. (194.9798,980.3980);
  % path3789
\path[draw=black,line join=miter,line cap=butt,miter limit=4.00,line 
width=0.960pt] (159.9693,1000.6672) .. controls (120.0451,993.9108) and 
(91.1768,952.7581) .. (153.2129,914.6766);
  % path3791
\path[draw=black,line join=miter,line cap=butt,miter limit=4.00,line 
width=0.960pt] (176.5532,1001.8956) .. controls (183.3096,1001.8956) and 
(193.1372,1002.5098) .. (200.5078,1000.6672);
  % path3793
\path[draw=black,line join=miter,line cap=butt,miter limit=4.00,line 
width=0.960pt] (212.1780,1001.2814) .. controls (271.1430,984.6975) and 
(268.6861,944.1591) .. (216.4775,913.4481);
  % path3795
\path[draw=black,line join=miter,line cap=butt,miter limit=4.00,line 
width=0.960pt] (144.6138,846.9698) .. controls (163.6546,862.9395) and 
(155.0556,912.0771) .. (150.1418,924.9757);
  % path3797
\path[draw=black,line join=miter,line cap=butt,miter limit=4.00,line 
width=0.960pt] (224.7967,850.0409) .. controls (205.7559,866.0106) and 
(214.3550,915.1482) .. (219.2687,928.0468);
  % path3799

\path[draw=c0000ff,line join=miter,line cap=butt,miter limit=4.00,line 
width=0.480pt] (158.7409,779.4057) .. controls (139.0859,788.6190) and 
(134.5225,795.3641) .. (134.7863,808.8883) .. controls (135.1216,826.0801) and 
(163.6546,856.7974) .. (159.9693,864.7822) .. controls (158.3212,868.3531) and 
(155.0556,875.8382) .. (155.0556,875.8382);
  % path3807
\path[draw=c0000ff,dash pattern=on 0.48pt off 0.96pt,line join=miter,line 
cap=butt,miter limit=4.00,line width=0.480pt] (155.0556,873.9955) .. controls 
(155.0556,880.1377) and (212.7922,884.4372) .. (213.4064,893.6505);
  % path3809
\path[draw=c0000ff,line join=miter,line cap=butt,miter limit=4.00,line 
width=0.480pt] (213.4064,893.0363) .. controls (214.0206,905.3207) and 
(159.9693,902.8638) .. (157.5124,913.3055) .. controls (155.0556,923.7472) and 
(131.2325,962.9460) .. (131.7152,970.4280) .. controls (132.9437,989.4687) and 
(148.9134,996.2251) .. (159.3551,998.0678);
  % path3811
\path[draw=c0000ff,line join=miter,line cap=butt,miter limit=4.00,line 
width=0.480pt] (175.9390,998.6820) .. controls (184.5381,996.8394) and 
(192.5229,988.0248) .. (192.5229,980.2554) .. controls (192.5229,969.1994) and 
(191.2945,922.5188) .. (198.0509,916.3766) .. controls (204.8073,910.2344) and 
(213.4064,909.0060) .. (214.0206,904.0922);
  % path3813
\path[draw=c0000ff,dash pattern=on 0.48pt off 0.96pt,line join=miter,line 
cap=butt,miter limit=4.00,line width=0.480pt] (214.0206,904.7065) .. controls 
(214.0206,897.3358) and (155.6698,895.4932) .. (156.2840,890.5794);
  % path3817
\path[draw=c0000ff,line join=miter,line cap=butt,miter limit=4.00,line 
width=0.480pt] (155.6698,891.1936) .. controls (156.8982,884.4372) and 
(195.5940,882.5946) .. (196.8225,873.9955) .. controls (198.0509,865.3964) and 
(190.0661,802.7461) .. (190.6803,797.2181) .. controls (191.2945,791.6901) and 
(193.1372,778.7915) .. (175.9390,778.7915);
  % path3819
\path[draw=c0000ff,line join=miter,line cap=butt,miter limit=4.00,line 
width=0.480pt] (185.1523,1030.6213) .. controls (193.7514,1030.6213) and 
(206.8157,1017.5654) .. (210.3353,997.4536) .. controls (214.6348,972.8848) and 
(206.0358,932.9605) .. (209.7211,922.5188) .. controls (210.9131,919.1415) and 
(214.0206,918.8335) .. (214.6348,912.0771);
  % path3821
\path[draw=c0000ff,dash pattern=on 0.48pt off 0.96pt,line join=miter,line 
cap=butt,miter limit=4.00,line width=0.480pt] (215.2491,912.0771) .. controls 
(216.4775,907.1633) and (154.4413,907.1633) .. (155.0556,899.7927);
  % path3823
\path[draw=c0000ff,line join=miter,line cap=butt,miter limit=4.00,line 
width=0.480pt] (155.0556,900.4069) .. controls (156.2840,894.8789) and 
(206.6500,888.1225) .. (207.2642,883.8230) .. controls (207.8784,879.5235) and 
(214.0206,795.9897) .. (212.7922,791.0759) .. controls (211.5637,786.1621) and 
(207.8549,742.8898) .. (184.5381,745.0095) .. controls (164.2689,746.8521) and 
(160.6568,786.1559) .. (159.9693,795.9896) .. controls (158.8951,811.3567) and 
(179.6243,854.9547) .. (174.7106,869.0817) .. controls (172.2724,876.0915) and 
(155.6698,879.5235) .. (155.6698,885.0514);
  % path3825
\path[draw=c0000ff,dash pattern=on 0.48pt off 0.96pt,line join=miter,line 
cap=butt,miter limit=4.00,line width=0.480pt] (155.6698,884.4372) .. controls 
(155.0556,889.9652) and (215.2491,896.7216) .. (214.0206,899.7927);
  % path3827
\path[draw=c0000ff,line join=miter,line cap=butt,miter limit=4.00,line 
width=0.480pt] (214.0206,899.7927) .. controls (214.6348,905.3207) and 
(173.4821,909.6202) .. (172.2537,916.3766) .. controls (171.0253,923.1330) and 
(160.5835,976.5701) .. (160.5835,983.9407) .. controls (160.5835,991.3113) and 
(168.5684,1030.0071) .. (185.1523,1030.6213);
  % text3867

\node at (155,761.11987) {$b'_1$};
  % text3871
\node at (147,823.56549) {$a'_1$};

\end{tikzpicture}
\caption{Cutting curves for $\tilde{M}_2$, $e=1$.}
\label{fig:dibujo-cilindro-unfibrado-azul}
\end{subfigure}
\caption{} 
\end{figure}

In this situation, the system of cutting curves for $M_1$ is formed by two
families of curves:
\begin{itemize}
 \item Curves $a_1,\ldots, a_g$ coming from half of the identified faces
in the prysm, see Figure~\ref{fig1:subfig2}. 
They are decomposed into four pieces as follows, see 
Figure~\ref{fig:cutting1}.
% We consider a path $\nu_i\subset S$ with extremities in $\partial D$ which turns around the 
% $i^{\text{th}}$ handle as its meridian and its reversed copy $\nu'_i$ in $N$ and we joined them
% by half-fibers in $C$.
% 
Consider points $p_i^1,p_i^2$ in $C\cap\gn$ and points $q_i^1,q_i^2$ in $C\cap\gs$ such that
there are half-fibers $\lambda_i^1$ (from $q_i^1$ to $p_i^1$) $\lambda_i^2$ 
(from $p_i^2$ to $q_i^2$).
Pick up a path $\nu_i$ in $\gs$ from $p_i^1$ to $p_i^2$ which turns around the 
$i$'th handle like its meridian.
We construct a path $\nu'_i$ in $\gn$ in a similar way with reversed 
orientation. 
Then,
$a_i:=\nu_i\cdot\lambda_i^2\cdot\nu'_i\cdot\lambda_i^1$. It is possible to 
choose these cycles
to be pairwise disjoint.

\item Curves $b_1,\ldots, b_g$ coming from the other half of the identified
faces. They are constructed in the same way as the $a_i$, but instead of taking
$\nu_i$ and ${\nu'}_i$, we take paths that turn around the handles like their
longitudes. These paths are chosen in such a way that they don't intersect
each other and they are also disjoint to the paths~$a_i$'s.
\end{itemize}

The prysm of Figure~\ref{fig1:subfig2} shows how to prove
that this is a system of cutting curves.

In order to obtain a system of cutting curves for $\tilde{M}_2$ we recall its 
construction. We
start with $M_2$ (homeomorphic copy to $M_1$) which is constructed in the 
same way as $M_1$ but using the other cylinder~$C'$. Recall that 
the union of the two cylinders $C$ and $C'$ along their common boundary yields
the torus $\mathbb{T}:=\pi^{-1}(\partial D)$, the boundary of the solid torus
$\pi^{-1}(D)$.
So the construction of the system of cutting curves for $M_2$ will
mimic the one for $M_1$ replacing the cylinder $C$ by~$C'$.

Since $\tilde{M}_2=M_2\cup\pi^{-1}(D)$, let us consider the situation
 at $\pi^{-1}(D)$.
In order to fix the orientations, we assume that $e=1$, leaving the case $e=-1$
for later. The solid torus $\pi^{-1}(D)$ is represented as a cylinder whose bottom and top are 
glued by a vertical translation in Figure~\ref{dibujo-cilindro-unfibrado}. Note
 that $\pi^{-1}(D)$ is the solid torus used in the float gluing in order to
obtain $\tilde{M}_2$ from $M_2$.

In the torus~$\mathbb{T}$, we fix the product structure with
oriented section $\mu_1$ (the boundary of a 
disk in the solid torus) and with 
oriented fibre $\phi_1$. 
Let us fix one cutting curve ($a_i$ or $b_i$) of $M_1$; it
intersects the cylinder~$C$ in two half-fibers.
Let
$\lambda_1$ be the one from $\gs$ to $\gn$; 
let ${\lambda'}_1$ be the other half of the fiber in $C'$ (which is part of a cutting curve in $M_2$)
but with opposite orientation, in order to go again from $\gs$ to $\gn$; i.e.,
$\lambda_1\cdot{\lambda'_1}^{-1}$ 
is homologous to~$\phi_1$ in $\mathbb{T}$. 

The cylinders $C$ and $C'$ have as common boundaries
two cycles $\gamma_1\subset\gn$ 
and ${\gamma'}_1\subset\gs$, oriented as boundaries of these surfaces; in Figure~\ref{dibujo-cilindro-unfibrado},
the front part of $C$ is coloured. The homology class of $\gamma_1$ in $\mathbb{T}$ is 
(with multiplicative notation) $\mu_1^{-1}\cdot\phi_1^{-e}$ 
(recall $e=1$ in~Figure~\ref{dibujo-cilindro-unfibrado}), since the definition of Euler number implies
that $\gamma_1\cdot\mu_1\cdot\phi_1^{e}$ is trivial.

The cycle $(\gamma'_1)^{-1}\cdot(\lambda_1\cdot{\lambda'_1}^{-1})^e\sim \gamma_1\cdot\phi_1^e\sim \mu_1^{-1}$ 
bounds a disk in $\pi^{-1}(D)$. The union of this disk with the cutting disk of $M_2$
containing ${\lambda'_1}^{-1}$ in its boundary provides a new disk where ${\lambda'_1}^{-1}$
is no more in its boundary. 
If we repeat this process
with the other half-fiber in the cutting curve, we obtain the corresponding 
cutting curve in~$\tilde{M}_2$ where
the half-fibers have been replaced by curves in~$C\subset\partial\tilde{M}_2=\partial M_1$. It can be checked that the retraction seen in Proposition~\ref{prop-MV} sends $\gamma_1'^{-e}\lambda_1$ to  ${\lambda'_1}$ and hence this
construction provides the cutting curve for~$\tilde{M}_2$.

Figure~\ref{fig:dibujo-cilindro-unfibrado-azul} shows the cutting curves of $\tilde{M}_2$
for $g=1$, $e=1$. Note that the blue curves in~$C$ turn around as $\gamma_1$
when going from $\gs$ to $\gn$. The closed curve $\gamma_1'$ is oriented as
boundary of $N$ and $\gamma_1$ is parallel to $\gamma_1'$.

It is clear that in the case of $e=-1$, the same thing will happen but
instead of turning as $\gamma_1$, the curves will turn as~$\gamma_1^{-1}$, see~Figure~\ref{dibujo-diagrama-g1e-1}.

\begin{figure}[ht]
\definecolor{cffffff}{RGB}{255,255,255}
\definecolor{cacacac}{RGB}{172,172,172}
\begin{subfigure}[b]{0.45\textwidth}
\begin{tikzpicture}[y=0.80pt, x=0.8pt,yscale=-.75, xscale=.75,inner sep=0pt, 
outer sep=0pt
,vertice/.style={draw,circle,fill,minimum size=0.15cm,inner sep=0}]
\tikzset{flecha1/.style={decoration={
  markings,
  mark=at position #1 with  {\arrowreversed{angle 90 
new}}},postaction={decorate}}}
\tikzset{flecha2/.style={decoration={
  markings,
  mark=at position #1 with  {\arrow{angle 90 new}}},postaction={decorate}}}

\node[above right] at (185,252.32137)  {$\lambda_1$};
  % text9804
\node[above right] at (249.50768,152.31625)  {$\gamma_1$};
  % text9808
\node[above right] at  (185,140)  {${\lambda'}_1$};
  % path4099

\draw[thick] (310,82) ellipse (91 and 35);

\path[draw=black,line join=miter,line cap=butt,line width=1pt] (219,82) -- 
(219,300);
  % path4869-9
\path[draw=black,line join=miter,line cap=butt,line width=1pt] (401,82) -- 
(401,300);
  % path4889
\draw[thick,flecha2=.5] (219,300) arc (180:0:91 and 40);

\draw[thick,dash pattern=on 2.40pt off 0.80pt] (219,300) arc (-180:0:91 and 40);

\path[draw=black,line join=miter,line cap=butt,line width=0.800pt,flecha2=.7] 
(219,300) .. controls (212.8561,223.8464) and (401,260) .. (401,192);
  % path4909-2

\path[draw=black,dash pattern=on 0.80pt off 2.40pt,line join=miter,line 
cap=butt,miter limit=4.00,line width=0.800pt] (219,82) .. controls 
(223.7457,134.6386) and (402.5428,150.8010) .. (401,192);

  % path4909-2-8
\path[draw=black,dash pattern=on 0.80pt off 2.40pt,line join=miter,line 
cap=butt,miter limit=4.00,line width=0.800pt] (401,294) .. controls 
(401.5387,234.6437) and (217.6909,232.6234) .. (219,192);
  % path4909-27

\path[draw=black,line join=miter,line cap=butt,line width=0.800pt,flecha2=.7] 
(401,82) .. controls (395.6039,157.0971) and (219,145) .. (219,192);
  % path4979

\path[fill=cacacac,fill opacity=0.415]  (219,187) --(219,300).. controls 
(212.8561,223.8464) and (401,260) .. (401,192)--(401,82).. controls 
(395.6039,157.0971) and (219,145) .. (219,192)--cycle;

\path[draw=black,line join=miter,line cap=butt,miter limit=4.00,line 
width=1.212pt, flecha1=0.2, flecha1=.4,flecha1=.9] (162,181) -- (212,181) -- 
(212,87) -- (162,87);
  % path7475

\path[draw=blue,line join=miter,line cap=butt,miter limit=4.00,line 
width=1.440pt] (212,181) -- (219,184) .. controls (227.3065,183.6310) and 
(217.3269,249.8127) .. (228.2945,251.3112) .. controls (241.1233,253.0640) and 
(272.3312,235.2952) .. (286.8833,230.0980) .. controls (301.0255,225.0473) and 
(404.0610,208.8848) .. (401,182);
  % path7477
\path[draw=blue,dash pattern=on 8.96pt off 8.96pt,line join=miter,line 
cap=butt,miter limit=4.00,line width=1.120pt] (401,182) .. controls 
(400.0204,149.2858) and (297.9950,126.0523) .. (283.8529,121.0015) .. controls 
(270.5346,116.2450) and (219.2031,101.8086) .. (219,88);

\path[draw=blue,line join=miter,line cap=butt,miter limit=4.00,line 
width=1.440pt](219,88)--(212,87);
  % path8443

  % path9442
 \path[draw=black,line join=miter,line cap=butt,line width=0.800pt, flecha2=.5] 
(411,300) -- (411,87);
  % text9796
\node[above right] at (417.19299,194.74268) {$\phi_1$};
  % text9800
\node[below right] at (365,232) {$\gamma'_1$};
  % text9816
\node[above right] at (297.995,332) {$\mu_1$};
  % path9942
\path[draw=black,line join=miter,line cap=butt,miter limit=4.00,line 
width=1.360pt, flecha1=0.1, flecha1=.4,flecha1=.9] (162,190) -- (212,190) -- 
(212,300) -- (162,300);

\node at (300,343) {};

\end{tikzpicture}
\caption{From $M_2$ to $\tilde{M}_2$, $e=1$.}
\label{dibujo-cilindro-unfibrado}
\end{subfigure}
\begin{subfigure}[b]{.45\textwidth}
\definecolor{cff0000}{RGB}{255,0,0}
\definecolor{c0000ff}{RGB}{0,0,255}

\begin{tikzpicture}[y=-0.80pt, x=0.8pt,yscale=-.75,, inner sep=0pt, outer 
sep=0pt]
\begin{scope}[shift={(0,-702.36215)}]% layer1
  % path3775
\path[draw=black,line join=miter,line cap=butt,miter limit=4.00,line 
width=0.960pt] (147.6849,800.9034) .. controls (165.4973,790.4617) and 
(156.8982,738.2531) .. (185.7665,740.0957) .. controls (214.6348,741.9384) and 
(210.3353,792.9186) .. (221.3912,797.8323);
  % path3777
\path[draw=black,line join=miter,line cap=butt,miter limit=4.00,line 
width=0.960pt] (185.1523,799.0607) .. controls (168.5684,793.5328) and 
(176.5532,760.3649) .. (186.9950,759.7507) .. controls (197.4367,759.1365) and 
(208.4926,788.0048) .. (194.9798,797.2181);
  % path3779
\path[draw=black,line join=miter,line cap=butt,miter limit=4.00,line 
width=0.960pt] (159.9693,776.9489) .. controls (120.0451,783.7053) and 
(91.1768,824.8579) .. (153.2129,862.9395);
  % path3781
\path[draw=black,line join=miter,line cap=butt,miter limit=4.00,line 
width=0.960pt] (176.5532,775.7204) .. controls (183.3096,775.7204) and 
(193.1372,775.1062) .. (200.5078,776.9489);
  % path3783
\path[draw=black,line join=miter,line cap=butt,miter limit=4.00,line 
width=0.960pt] (212.1780,776.3346) .. controls (271.1430,792.9186) and 
(268.6861,833.4570) .. (216.4775,864.1680);
  % path3785
\path[draw=black,line join=miter,line cap=butt,miter limit=4.00,line 
width=0.960pt] (147.6849,976.7127) .. controls (165.4973,987.1544) and 
(156.8982,1039.3630) .. (185.7665,1037.5203) .. controls (214.6348,1035.6777) 
and (210.3353,984.6976) .. (221.3912,979.7838);
  % path3787
\path[draw=black,line join=miter,line cap=butt,miter limit=4.00,line 
width=0.960pt] (185.1523,978.5553) .. controls (168.5684,984.0833) and 
(176.5532,1017.2511) .. (186.9950,1017.8653) .. controls (197.4367,1018.4795) 
and (208.4926,989.6112) .. (194.9798,980.3980);
  % path3789
\path[draw=black,line join=miter,line cap=butt,miter limit=4.00,line 
width=0.960pt] (159.9693,1000.6672) .. controls (120.0451,993.9108) and 
(91.1768,952.7581) .. (153.2129,914.6766);
  % path3791
\path[draw=black,line join=miter,line cap=butt,miter limit=4.00,line 
width=0.960pt] (176.5532,1001.8956) .. controls (183.3096,1001.8956) and 
(193.1372,1002.5098) .. (200.5078,1000.6672);
  % path3793
\path[draw=black,line join=miter,line cap=butt,miter limit=4.00,line 
width=0.960pt] (212.1780,1001.2814) .. controls (271.1430,984.6975) and 
(268.6861,944.1591) .. (216.4775,913.4481);
  % path3795
\path[draw=black,line join=miter,line cap=butt,miter limit=4.00,line 
width=0.960pt] (144.6138,846.9698) .. controls (163.6546,862.9395) and 
(155.0556,912.0771) .. (150.1418,924.9757);
  % path3797
\path[draw=black,line join=miter,line cap=butt,miter limit=4.00,line 
width=0.960pt] (224.7967,850.0409) .. controls (205.7559,866.0106) and 
(214.3550,915.1482) .. (219.2687,928.0468);
  % path3799
\path[draw=cff0000,line join=miter,line cap=butt,miter limit=4.00,line 
width=0.432pt] (156.2840,784.3195) .. controls (134.7863,797.2181) and 
(142.7712,810.1167) .. (146.4565,824.8579) .. controls (150.1418,839.5992) and 
(173.2161,853.7381) .. (171.6395,894.8789) .. controls (169.7320,944.6537) and 
(145.8423,941.9110) .. (143.3854,964.2857) .. controls (141.1730,984.4338) and 
(153.2129,990.0829) .. (157.5124,993.1540);
  % path3801
\path[draw=cff0000,line join=miter,line cap=butt,miter limit=4.00,line 
width=0.432pt] (174.7106,781.2484) .. controls (189.4518,781.8626) and 
(186.8746,796.8672) .. (185.7665,816.2589) .. controls (184.5381,837.7566) and 
(190.6803,836.5281) .. (191.2945,894.2647) .. controls (191.9087,952.0013) and 
(188.8376,967.9710) .. (188.2234,977.7985) .. controls (187.6092,987.6260) and 
(183.9239,990.6971) .. (175.9390,994.9966);
  % path3803
\path[draw=cff0000,line join=miter,line cap=butt,miter limit=4.00,line 
width=0.432pt] (177.1675,1021.4080) .. controls (191.2945,1033.0782) and 
(206.0358,1010.9663) .. (207.8784,994.9967) .. controls (209.7211,979.0270) and 
(206.0358,958.1435) .. (204.1931,889.3510) .. controls (202.3504,820.5584) and 
(210.3353,797.8323) .. (209.7211,791.6901) .. controls (209.1069,785.5479) and 
(205.4215,750.5374) .. (186.3807,749.9232) .. controls (167.3400,749.3090) and 
(164.8831,780.6342) .. (164.2689,795.9897) .. controls (163.6546,811.3451) and 
(181.4670,839.5992) .. (180.2386,890.5794) .. controls (179.0101,941.5596) and 
(163.0404,975.3417) .. (166.1115,990.6972) .. controls (169.1826,1006.0526) and 
(177.1675,1021.4080) .. (177.1675,1021.4080) -- cycle;
  % path3805
\path[draw=c0000ff,line join=miter,line cap=butt,miter limit=4.00,line 
width=0.480pt] (158.7409,779.4057) .. controls (139.0859,788.6190) and 
(134.5225,795.3641) .. (134.7863,808.8883) .. controls (135.1216,826.0801) and 
(163.6546,856.7974) .. (159.9693,864.7822) .. controls (158.3212,868.3531) and 
(155.0556,875.8382) .. (155.0556,875.8382);
  % path3807
\path[draw=c0000ff,dash pattern=on 0.48pt off 0.96pt,line join=miter,line 
cap=butt,miter limit=4.00,line width=0.480pt] (155.0556,873.9955) .. controls 
(155.0556,880.1377) and (212.7922,884.4372) .. (213.4064,893.6505);
  % path3809
\path[draw=c0000ff,line join=miter,line cap=butt,miter limit=4.00,line 
width=0.480pt] (213.4064,893.0363) .. controls (214.0206,905.3207) and 
(159.9693,902.8638) .. (157.5124,913.3055) .. controls (155.0556,923.7472) and 
(131.2325,962.9460) .. (131.7152,970.4280) .. controls (132.9437,989.4687) and 
(148.9134,996.2251) .. (159.3551,998.0678);
  % path3811
\path[draw=c0000ff,line join=miter,line cap=butt,miter limit=4.00,line 
width=0.480pt] (175.9390,998.6820) .. controls (184.5381,996.8394) and 
(192.5229,988.0248) .. (192.5229,980.2554) .. controls (192.5229,969.1994) and 
(191.2945,922.5188) .. (198.0509,916.3766) .. controls (204.8073,910.2344) and 
(213.4064,909.0060) .. (214.0206,904.0922);
  % path3813
\path[draw=c0000ff,dash pattern=on 0.48pt off 0.96pt,line join=miter,line 
cap=butt,miter limit=4.00,line width=0.480pt] (214.0206,904.7065) .. controls 
(214.0206,897.3358) and (155.6698,895.4932) .. (156.2840,890.5794);
  % path3817
\path[draw=c0000ff,line join=miter,line cap=butt,miter limit=4.00,line 
width=0.480pt] (155.6698,891.1936) .. controls (156.8982,884.4372) and 
(195.5940,882.5946) .. (196.8225,873.9955) .. controls (198.0509,865.3964) and 
(190.0661,802.7461) .. (190.6803,797.2181) .. controls (191.2945,791.6901) and 
(193.1372,778.7915) .. (175.9390,778.7915);
  % path3819
\path[draw=c0000ff,line join=miter,line cap=butt,miter limit=4.00,line 
width=0.480pt] (185.1523,1030.6213) .. controls (193.7514,1030.6213) and 
(206.8157,1017.5654) .. (210.3353,997.4536) .. controls (214.6348,972.8848) and 
(206.0358,932.9605) .. (209.7211,922.5188) .. controls (210.9131,919.1415) and 
(214.0206,918.8335) .. (214.6348,912.0771);
  % path3821
\path[draw=c0000ff,dash pattern=on 0.48pt off 0.96pt,line join=miter,line 
cap=butt,miter limit=4.00,line width=0.480pt] (215.2491,912.0771) .. controls 
(216.4775,907.1633) and (154.4413,907.1633) .. (155.0556,899.7927);
  % path3823
\path[draw=c0000ff,line join=miter,line cap=butt,miter limit=4.00,line 
width=0.480pt] (155.0556,900.4069) .. controls (156.2840,894.8789) and 
(206.6500,888.1225) .. (207.2642,883.8230) .. controls (207.8784,879.5235) and 
(214.0206,795.9897) .. (212.7922,791.0759) .. controls (211.5637,786.1621) and 
(207.8549,742.8898) .. (184.5381,745.0095) .. controls (164.2689,746.8521) and 
(160.6568,786.1559) .. (159.9693,795.9896) .. controls (158.8951,811.3567) and 
(179.6243,854.9547) .. (174.7106,869.0817) .. controls (172.2724,876.0915) and 
(155.6698,879.5235) .. (155.6698,885.0514);
  % path3825
\path[draw=c0000ff,dash pattern=on 0.48pt off 0.96pt,line join=miter,line 
cap=butt,miter limit=4.00,line width=0.480pt] (155.6698,884.4372) .. controls 
(155.0556,889.9652) and (215.2491,896.7216) .. (214.0206,899.7927);
  % path3827
\path[draw=c0000ff,line join=miter,line cap=butt,miter limit=4.00,line 
width=0.480pt] (214.0206,899.7927) .. controls (214.6348,905.3207) and 
(173.4821,909.6202) .. (172.2537,916.3766) .. controls (171.0253,923.1330) and 
(160.5835,976.5701) .. (160.5835,983.9407) .. controls (160.5835,991.3113) and 
(168.5684,1030.0071) .. (185.1523,1030.6213);
  % text3867
\end{scope}

\end{tikzpicture}
\caption{Example for the case $g=1$, $e=-1$.}
\label{dibujo-diagrama-g1e-1}
\end{subfigure}
\caption{}
\end{figure}

\section{\texorpdfstring{Heegaard splittings of arbitrary $\bs^1$-bundles over 
 surfaces}%
{Heegaard decomposition of arbitrary S1-bundles over  surfaces}}
\label{sec-eulern}

In order to construct a Heegaard splitting for arbitrary Euler number~$e$ we proceed as 
follows.
Let now $\check{S}:=\overline{S\setminus\bigcup_{j=1}^n D_i}$, where $D_1,\dots,D_n$ are 
pairwise disjoint
closed disks in $S$. As before, let $s_1,s_2:\check{S}\to M$ be arbitrary 
parallel sections of~$\pi$.
For each $j=1,\dots,n$, let $\gamma_j:=s_1(\partial D_j)$ (oriented as part of 
$\partial\check{S}$)
and let $\mu_j$ be the boundary of a meridian disk of the solid torus 
$\pi^{-1}(D_j)$. 
By the choice of orientations
the cycle $\gamma_j\cdot\mu_j\cdot \phi^{e_j}$ 
is trivial in $H_1(\pi^{-1}(\partial D_j);\bz)$, 
for some $e_j\in\bz$, where
$\phi$ is an oriented fiber of~$\pi$.
The following is a classical result.

\begin{lema}
With the above notations, $e=\sum_{j=1}^n e_j$. Moreover, for every choice of 
the $e_j$'s satisfying this equality, there exists a choice of sections that 
realizes it.
\end{lema}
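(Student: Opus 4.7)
The plan is to deduce both assertions as direct specializations of the additivity property for oriented $\bs^1$-bundles established immediately after equation~\eqref{eq:euler}, which states that whenever $S=S_1\cup S_2$ with $\partial S_1=\partial S_2=S_1\cap S_2$ consisting of $r$ connected components $C_1,\dots,C_r$, and sections $s_i:S_i\to M$ give boundary relations $s_1^j\cdot s_2^j\cdot\phi^{e_j}=1$ in $H_1(C_j;\bz)$, then $e=e_1+\dots+e_r$, and moreover every such decomposition of $e$ is realizable by appropriate sections.

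For the additivity $e=\sum_{j=1}^n e_j$, I would apply this general principle to the decomposition $S_1:=\bigsqcup_{j=1}^n D_j$ (not required to be connected) and $S_2:=\check{S}$. Their intersection $S_1\cap S_2=\bigsqcup_{j=1}^n\partial D_j$ has exactly $r=n$ connected components. On each $D_j$ one extends the section via a meridian disk of the solid torus $\pi^{-1}(D_j)$, so that its restriction to $\partial D_j$ is precisely the meridian $\mu_j$; on $\check{S}$, the fixed section $s_1$ restricts to $\gamma_j$ on $\partial D_j$ by definition. The general relation then specializes in $H_1(\pi^{-1}(\partial D_j);\bz)$ to exactly $\gamma_j\cdot\mu_j\cdot\phi^{e_j}=1$, so the stated identity $e=\sum e_j$ is nothing but the additivity formula in this setting.

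For the realizability, given any tuple $(e_j)_{j=1}^n$ with $\sum_{j=1}^n e_j=e$, the converse half of the earlier statement supplies sections that achieve these values. Equivalently, one may argue directly: start with an arbitrary reference section $s_1^0:\check{S}\to M$ producing some tuple $(e_j^0)$ with $\sum e_j^0=e$, and modify $s_1^0$ by multiplying fiberwise by a continuous map $f:\check{S}\to\bs^1$ whose restrictions to the boundary components have degrees $e_j-e_j^0$. Such an $f$ exists precisely because $\sum(e_j-e_j^0)=0$, which is exactly the condition for the prescribed boundary degrees to lie in the image of the restriction map $H^1(\check{S};\bz)\to H^1(\partial\check{S};\bz)$, whose image is $\{(a_j)\in\bz^n:\sum a_j=0\}$. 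The resulting section $s_1\cdot f$ realizes the desired decomposition.

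The only real obstacle is bookkeeping: one must verify that the orientation conventions used here for $\mu_j$, $\gamma_j$, and $\phi$ match those appearing in the statement of the general additivity formula, so that signs in $e=\sum e_j$ come out correctly rather than with some $e_j$ replaced by $-e_j$. Once this sign check is done, the proof consists simply of specializing the general formula; no new topological argument is needed.
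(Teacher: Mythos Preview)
Your proposal is correct and matches the paper's approach: the paper does not give a separate proof of this lemma but simply labels it ``a classical result,'' having already stated the general additivity principle (and its converse) in \S\ref{sec:graph-manifold} immediately after~\eqref{eq:euler}. Your argument is precisely the specialization of that principle to $S_1=\bigsqcup_j D_j$, $S_2=\check{S}$, and your direct realizability argument via a fiberwise twist by $f:\check{S}\to\bs^1$ is a welcome elaboration of what the paper leaves implicit.
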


As we did in \S\ref{sec-euler1}, we may decompose 
$\check{M}:=\pi^{-1}(\check{S})$ in two pieces $M_1$ and $M_2$;
$M_1$ and $M_2$ are oriented compact $3$-manifolds with boundary
and $M_1\cap M_2=\partial M_1\cap \partial M_2=s_1(\check{S})\coprod 
s_2(\check{S})$ with the same orientation convention.
From Theorem~\ref{thm-cuerpo-asas}, the manifolds $M_1$ and
$M_2$ are $(2g+n-1)$-handle bodies.

Let us assume that $e_j=\pm 1$, $j=1,\dots,n$. 
Note that $M_2$ is homeomorphic to $M_1$ and hence, it is also a 
$(2g+n-1)$-handle body.
Let $\tilde{M}_2:=M_2\cup\bigcup_{j=1}^n \pi^{-1}(D_j)$. Following the arguments
in the proof of Theorem~\ref{thm-pegado-toro1}, we can see that 
$M_2\cong\tilde{M}_2$ and
$M_1$ and $\tilde{M}_2$ have the same boundary. We have proven the following 
result.

\begin{thm}
The submanifolds $M_1$ and $\tilde{M}_2$ form a Heegaard splitting of $M$.
If $e=0$, a decomposition of this kind of genus $2 g+1$ can be always obtained; and if $e\neq 0$,
one of genus $2 g+|e|-1$.
\end{thm}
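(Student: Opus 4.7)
The plan is to imitate the strategy of \S\ref{sec-euler1} and iterate Proposition~\ref{prop-MV} once per disk. We already know that $M_1$ and $M_2$ are both $(2g+n-1)$-handle bodies by Theorem~\ref{thm-cuerpo-asas}, that $M = M_1\cup M_2 \cup \bigcup_{j=1}^n \pi^{-1}(D_j)$, and that $M_1\cap M_2=s_1(\check{S})\sqcup s_2(\check{S})$. So the Heegaard-splitting statement will follow as soon as we can absorb each solid torus $\pi^{-1}(D_j)$ into $M_2$ by a float gluing; the total genus will then be $2g+n-1$, and only the minimization of $n$ will remain.

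The key verification is that the hypothesis of Proposition~\ref{prop-MV} is met at each $j$. The annulus $C'_j\subset\partial M_2\cap \partial\pi^{-1}(D_j)$ along which the gluing happens has core parallel to $\gamma_j$. From the defining relation $\gamma_j\cdot \mu_j\cdot \phi^{e_j}=1$ in $H_1(\pi^{-1}(\partial D_j);\bz)$ together with the assumption $e_j=\pm 1$, the class of $\gamma_j$ is $\mu_j^{-1}\phi^{\mp 1}$, which has fibre-component $\pm 1$. Hence $\gamma_j$ is a longitude of the solid torus $\pi^{-1}(D_j)$, isotopic to its core after Dehn twists along a meridian disk, and Proposition~\ref{prop-MV} applies. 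Performing the $n$ float gluings (which can be done disjointly, since the $C'_j$ are pairwise disjoint in $\partial M_2$) yields $\tilde{M}_2\cong M_2$, hence a $(2g+n-1)$-handle body. Since $\partial M_1=\partial \tilde{M}_2$ by construction, $(M_1,\tilde{M}_2)$ is a Heegaard splitting of $M$ of genus~$2g+n-1$.

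To finish, we minimize~$n$ using the lemma stated just above: any admissible decomposition $e=\sum_{j=1}^n e_j$ is realized by a suitable choice of parallel sections. Our construction requires $e_j=\pm 1$, which forces $n\geq |e|$ and $n\equiv e\pmod{2}$. When $e\neq 0$ the minimum is attained by $n=|e|$ with $e_j=\mathrm{sgn}(e)$ for all~$j$, giving genus $2g+|e|-1$; when $e=0$ the minimum compatible with $e_j=\pm 1$ is $n=2$ (take $e_1=1$, $e_2=-1$), giving genus $2g+1$. I do not foresee any genuine obstacle: the only subtlety is bookkeeping of orientations so that the oriented class of $\gamma_j$ matches the orientation convention for longitudes in Proposition~\ref{prop-MV}, and this is exactly the same kind of orientation check already performed in the $e=\pm 1$ case in \S\ref{sec:diagramae1}.
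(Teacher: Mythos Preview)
Your proof is correct and follows essentially the same approach as the paper's own argument: invoke Theorem~\ref{thm-cuerpo-asas} for the handle-body structure, then iterate the float-gluing argument of Theorem~\ref{thm-pegado-toro1} (via Proposition~\ref{prop-MV}) once per disk, using $e_j=\pm 1$ to ensure the core of each $C'_j$ is a longitude of the solid torus $\pi^{-1}(D_j)$. You have simply made explicit the verification of the longitude condition and the minimization of~$n$, both of which the paper leaves implicit.
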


\begin{obs}
In this process, we have glued all the solid tori $ \pi^{-1}(D_j)$ to $M_2$. 
This is not essential
for the proof:  we could have glued some of them to $M_1$ and the result would 
be equally valid.
\end{obs}

Let us describe
the cutting curves of $M_1$. First, we consider the cutting
curves of 
\S\ref{sec:diagramae1}. Second, we add curves $c_j$, $j=2,\dots,n$
as follows. Consider the paths $\alpha_j$ (as in the proof of Theorem~\ref{thm-cuerpo-asas}) 
joining $p_j\in\partial D_1$ and 
$q_j\in\partial D_j$; recall that by cutting along them $\check{S}$ becomes a disk.
% 
% that come from cutting along the 
% squares $\alpha_j\times I$. There is one of such curves for each $D_i$ with 
% $i\geq 2$. They can be described as follows.
% Choose paths $\varrho_i$ in $\check{S}$ joining $p_i\in\partial D_1$ and 
% $q_i\in\partial D_i$, such that cutting along them $\check{S}$ becomes a disk.
The boundaries
$$
c_j=s_1(\alpha_j)\cdot(\{q_j\}\times I)\cdot s_1(\alpha_j)^{-1}\cdot(\{p_j\}\times I)^{-1}.
$$
of $\alpha_j\times I$, together with the curves of \S\ref{sec:diagramae1},
form a system of cutting curves for~$M_1$.

Following the arguments in \S\ref{sec:diagramae1}, the curves of $M_2$ mimic the ones of $M_1$ except for the 
modification in the cylinders $\partial D_i\times I$, $1\leq i\leq n$, due to the float gluing
of the solid tori $\pi^{-1}(D_i)$. By the same reasoning as before,
these modifications consist on a Dehn twist along each cylinder. The orientation of each
Dehn twist depends on the sign of each $e_i$.
Note that the cylinder $\partial D_1\times I$ plays a special role;
it will be called \emph{main cylinder}.

\begin{figure}[ht]
\centering
\begin{subfigure}[b]{.45\textwidth}
\definecolor{cff0000}{RGB}{255,0,0}
\definecolor{c0000ff}{RGB}{0,0,255}
\begin{tikzpicture}[y=0.80pt, x=0.8pt,yscale=-1.3, inner sep=0pt, outer sep=0pt]
  % path2985
\def\arriba{\path[draw=black,line join=miter,line cap=butt,line width=1.200pt]
    (48,922) .. controls (37,919) and (21,910) ..
    (21,900) .. controls (20,900) and (25,862) ..
    (91,865) .. controls (157,863) and (187,875) ..
    (180,900) .. controls (179,910) and (177,910) ..
    (162,922)}

\def\abajo{\path[draw=black,line join=miter,line cap=butt,line width=1.200pt]
    (71,924) .. controls (74,924) and (88,924.6) ..
    (92,923.6);
  \path[draw=black,line join=miter,line cap=butt,line width=1.200pt]
    (113,924.6) .. controls (123,924.7) and (131,924.7) ..
    (136,924.8);
}

\arriba;
\abajo
  % path2987
\begin{scope}[yscale=-1,yshift=-1520]
\arriba;
\abajo
\end{scope}

 \path[draw=black,line join=miter,line cap=butt,line width=1.200pt]
    (45.1413,910.5685) .. controls (51.1557,928.9792) and (51.8239,955.2802) ..
    (50.8215,963.6088) .. controls (49.8191,971.9375) and (47.2262,985.5335) ..
    (44.8873,990.7937);

  % path2989
  \path[draw=black,line join=miter,line cap=butt,line width=1.200pt]
    (73.0131,910.9500) .. controls (70.4797,925.8778) and (68.1996,946.9525) ..
    (68.4530,955.2945) .. controls (68.7063,963.6366) and (71.7464,983.8331) ..
    (73.5198,988.2237);

  % path2987-2
  \path[draw=black,line join=miter,line cap=butt,line width=1.200pt]
    (88.7865,908.7827) .. controls (93.8740,927.2085) and (94.4393,953.5312) ..
    (93.5914,961.8667) .. controls (92.7435,970.2022) and (91.2576,982.9379) ..
    (89.2792,988.2024);

  % path2989-9
  \path[draw=black,line join=miter,line cap=butt,line width=1.200pt]
    (115.3074,910.4968) .. controls (112.9766,925.4306) and (110.8789,946.5137) ..
    (111.1120,954.8591) .. controls (111.3450,963.2045) and (114.1420,983.4091) ..
    (115.7736,987.8014);

  % path2987-6
  \path[draw=black,line join=miter,line cap=butt,line width=1.200pt]
    (133.3172,908.3327) .. controls (138.0968,926.7639) and (138.6278,953.0941) ..
    (137.8312,961.4320) .. controls (137.0346,969.7699) and (135.7712,982.0732) ..
    (133.9125,987.3393);

  % path2989-4
  \path[draw=black,line join=miter,line cap=butt,line width=1.200pt]
    (164.2186,910.0895) .. controls (161.4820,925.0116) and (159.0191,946.0781) ..
    (159.2927,954.4169) .. controls (159.5664,962.7557) and (162.8503,982.9445) ..
    (164.7659,987.3333);

  % path3045

  % path3047
  \path[draw=cff0000,line join=miter,line cap=butt,miter limit=4.00,very thick] (64.6467,985.9161) .. controls (62.9918,971.5604) and
    (60.7074,942.5836) .. (63.3336,910.1936) .. controls (63.7581,904.9583) and
    (67.7106,892.6855) .. (77.3401,892.6855) .. controls (86.9695,892.6855) and
    (91.9239,901.8385) .. (94.8482,910.1936) .. controls (97.9121,918.9476) and
    (97.4744,978.4752) .. (94.4105,986.3538) .. controls (92.9055,990.2239) and
    (87.4072,1000.7980) .. (80.8417,1001.2357) .. controls (74.2762,1001.6734) and
    (66.6597,1003.3783) .. (64.6467,985.9161) -- cycle;

  % path3047-6
  \path[draw=cff0000,line join=miter,line cap=butt,miter limit=4.00,very thick] (108.9442,987.9770) .. controls (107.2893,973.6212) and
    (105.0049,933.2641) .. (107.6311,912.2544) .. controls (108.2826,907.0426) and
    (112.0081,894.7463) .. (121.6376,894.7463) .. controls (131.2670,894.7463) and
    (136.2214,903.8994) .. (139.1457,912.2544) .. controls (142.2096,921.0085) and
    (143.9604,980.5360) .. (140.8965,988.4147) .. controls (139.3914,992.2847) and
    (131.7047,1002.8588) .. (125.1392,1003.2965) .. controls (118.5736,1003.7342)
    and (110.9572,1005.4391) .. (108.9442,987.9770) -- cycle;

  % path3837
  \path[draw=c0000ff,line join=miter,line cap=butt,miter limit=4.00,very thick] (136.8676,936.0180) .. controls (144.3086,930.3279) and
    (143.8709,920.6984) .. (142.1201,914.1329) .. controls (140.3692,907.5673) and
    (139.9315,891.8099) .. (121.5480,892.2478) .. controls (110.6076,892.5084) and
    (104.9153,901.0018) .. (104.4776,919.3853) .. controls (104.1651,932.5127) and
    (95.7236,934.2672) .. (93.0974,937.7688);

  % path3839
  \path[draw=c0000ff,dash pattern=on 1pt off 1pt,line join=miter,line
    cap=butt,miter limit=4.00,very thick] (93.0974,937.3311) .. controls
    (100.9760,939.0819) and (107.9793,961.4048) .. (111.4809,961.8425);

  % path3841
  \path[draw=c0000ff,line join=miter,line cap=butt,miter limit=4.00,very thick] (111.4809,961.8425) .. controls (104.9153,966.2195) and
    (104.9153,970.1588) .. (105.3530,976.7244) .. controls (105.7908,983.2899) and
    (104.9153,997.2964) .. (112.7940,1003.8619) .. controls (117.5730,1007.8444)
    and (125.5074,1006.1091) .. (129.4267,1004.7373) .. controls
    (138.1808,1001.6734) and (143.6153,992.9806) .. (143.8709,987.2292) ..
    controls (144.7463,967.5326) and (153.5003,960.5294) .. (159.6282,958.7786);

  % path4203
  \path[draw=c0000ff,dash pattern=on 1pt off 1pt,line join=miter,line
    cap=butt,miter limit=4.00,very thick] (159.1905,959.2163) .. controls
    (153.0626,958.3409) and (140.3693,936.8934) .. (137.3053,936.4557);

  % path3837-5
  \path[draw=c0000ff,line join=miter,line cap=butt,miter limit=4.00,very thick] (90.9089,924.2573) .. controls (98.3498,918.5672) and
    (98.5226,907.2126) .. (94.8482,901.4968) .. controls (90.9089,895.3690) and
    (85.6588,889.6286) .. (76.9024,889.2411) .. controls (69.4741,888.9124) and
    (62.9078,895.4362) .. (60.7074,907.1869) .. controls (59.3583,914.3913) and
    (51.5156,929.0721) .. (48.8894,932.5737);

  % path3839-4
  \path[draw=c0000ff,dash pattern=on 1pt off 1pt,line join=miter,line
    cap=butt,miter limit=4.00,very thick] (49.3271,930.8229) .. controls
    (57.2058,932.5737) and (64.6467,953.5834) .. (68.1483,954.0211);

  % path3841-2
  \path[draw=c0000ff,line join=miter,line cap=butt,miter limit=4.00,very thick] (68.1483,954.0211) .. controls (61.5828,958.3981) and
    (59.8320,967.5899) .. (60.2697,974.1554) .. controls (60.7074,980.7210) and
    (60.4988,1000.5844) .. (70.3368,1003.4815) .. controls (76.4290,1005.2755) and
    (84.2843,1006.9310) .. (93,1000) .. controls (99,988.5728) and
    (97.6908,977.2820) .. (102,971.5292) .. controls (105,960.1490) and
    (105.7908,948.7687) .. (111.9186,947.0179);

  % path4203-8
  \path[draw=c0000ff,dash pattern=on 1pt off 1pt,line join=miter,line
    cap=butt,miter limit=4.00,very thick] (111.0432,947.0179) .. controls
    (104.9153,946.1425) and (94.4105,925.1327) .. (91.3466,924.6950);
\end{tikzpicture}
 \caption{Example for the case $g=0$, $e=3$.}
\label{fig:g0e3}
\end{subfigure}
\begin{subfigure}[b]{.45\textwidth}
\definecolor{cff0000}{RGB}{255,0,0}
\definecolor{c0000ff}{RGB}{0,0,255}

\begin{tikzpicture}[y=0.80pt, x=0.8pt,yscale=-.6, inner sep=0pt, outer sep=0pt]
\begin{scope}[shift={(-277.84067,-130.38785)}]% layer1
  % path2987-5-2
\def\arriba{\path[draw=black,line join=miter,line cap=butt,miter limit=4.00,line
    width=2.000pt] (362,181) .. controls (341,189) and
    (321,132) .. (288,147) .. controls (256,161) and
    (323,215) .. (317,225);
  \path[draw=black,line join=miter,line cap=butt,miter limit=4.00,line
    width=2.000pt] (344,199) .. controls (344,187) and
    (310,156) .. (299,162) .. controls (286,169) and
    (323,208) .. (337,207);
  \path[draw=black,line join=miter,line cap=butt,miter limit=4.00,line
    width=2.000pt] (337,166) .. controls (345,160) and
    (363,154) .. (375,155) .. controls (388,155) and
    (404,160) .. (418,168) -- (428,176) .. controls
    (433,181) and (439,187) .. (442,192) --
    (448,205) .. controls (461,235) and (447,283) ..
    (410,300);
  \path[draw=black,line join=miter,line cap=butt,miter limit=4.00,line
    width=2.000pt] (389,307) .. controls (382,309) and
    (379,308) .. (373,307);
  \path[draw=black,line join=miter,line cap=butt,miter limit=4.00,line
    width=2.000pt] (324,290) .. controls (303,264) and
    (291,246) .. (305,205);
  \path[draw=black,line join=miter,line cap=butt,miter limit=4.00,line
    width=2.000pt] (325,174) .. controls (316,184) and
    (317,183) .. (312,190);
}

\arriba
  % path2991-5-1
\begin{scope}[yscale=-1,yshift=-526]
\arriba
\end{scope}

  % path3035
  \path[draw=black,line join=miter,line cap=butt,miter limit=4.00,line
    width=2.000pt] (319.7430,265.5190) .. controls (328.3293,275.1154) and
    (323.5744,395.4733) .. (317.5135,402.5444);

  % path3035-4
  \path[draw=black,line join=miter,line cap=butt,miter limit=4.00,line
    width=2.000pt] (378.5125,270.6669) .. controls (369.9262,280.2634) and
    (371.8240,391.6927) .. (377.8849,398.7638);

  % path3035-2
  \path[draw=black,line join=miter,line cap=butt,miter limit=4.00,line
    width=2.000pt] (384.7734,281.8683) .. controls (393.3597,291.4648) and
    (389.3191,376.8227) .. (383.2582,383.8937);

  % path3035-4-8
  \path[draw=black,line join=miter,line cap=butt,miter limit=4.00,line
    width=2.000pt] (414.2572,281.8683) .. controls (405.6709,291.4648) and
    (409.7116,376.8227) .. (415.7725,383.8937);
  % path3084

  % path3086

  % path2987-5-2-6

  % path3082-9

  % path3084-8

  % path3086-6

  % path3953
  \path[draw=cff0000,line join=miter,line cap=butt,line width=0.800pt]
    (348.4667,286.3812) .. controls (348.9718,264.1578) and (368.2006,201.3087) ..
    (360.6245,194.7427) .. controls (353.0483,188.1767) and (299.4743,132.7662) ..
    (288.8677,154.9896) .. controls (278.2611,177.2129) and (317.9635,211.8540) ..
    (320.7234,218.4813) .. controls (323.4832,225.1080) and (327.6509,254.1071) ..
    (332.0233,282.3913) .. controls (336.3505,310.3837) and (334.6353,346.3881) ..
    (332.5897,379.7231) .. controls (330.5289,413.3035) and (325.7742,430.1082) ..
    (317.1879,441.2199) .. controls (308.6016,452.3316) and (274.7615,491.2224) ..
    (284.3579,502.3341) .. controls (293.9544,513.4458) and (309.4055,514.1467) ..
    (343.4519,475.0600) .. controls (358.0853,458.2602) and (343.4999,428.3216) ..
    (347.4461,373.0600) .. controls (347.6144,353.5304) and (348.2984,317.4670) ..
    (348.4667,286.3812) -- cycle;

  % path3953-7
  \path[draw=c0000ff,line join=miter,line cap=butt,miter limit=4.00,line
    width=0.400pt] (372.4783,327.7935) .. controls (367.6367,339.3700) and
    (345.8467,391.3402) .. (319.7814,447.4563) .. controls (313.8658,460.1920) and
    (283.9210,485.3370) .. (293.5174,496.4487) .. controls (303.1139,507.5603) and
    (307.2287,509.6338) .. (342.0048,471.1948) .. controls (353.1612,458.8633) and
    (339.5591,435.8114) .. (350.4719,411.0891) .. controls (362.3809,384.1101) and
    (369.7259,362.7328) .. (372.2691,343.1654)(325.5762,325.0440) .. controls
    (327.0383,317.8517) and (333.8120,321.4037) .. (340.5666,280.1453) .. controls
    (349.3306,226.6128) and (345.3838,241.5763) .. (348.3257,196.8518) .. controls
    (348.9837,186.8479) and (301.1802,142.8340) .. (294.1092,160.5117) .. controls
    (284.9638,183.3752) and (321.8017,210.5755) .. (326.3474,216.1313) .. controls
    (330.8931,221.6872) and (332.6368,283.3678) .. (324.9486,308.3257);

  % path3973
  \path[draw=c0000ff,dash pattern=on 0.40pt off 0.80pt,line join=miter,line
    cap=butt,miter limit=4.00,line width=0.400pt] (370.7143,327.3622) .. controls
    (371.0714,321.2908) and (325.0000,315.2193) .. (325.0000,307.7193);

  % path3973-6
  \path[draw=c0000ff,dash pattern=on 0.40pt off 0.80pt,line join=miter,line
    cap=butt,miter limit=4.00,line width=0.400pt] (370.7133,341.8265) .. controls
    (371.0704,335.7550) and (324.9990,329.6836) .. (324.9990,322.1836);

  % path3995
  \path[draw=cff0000,line join=miter,line cap=butt,line width=0.800pt]
    (343.9286,173.0765) .. controls (348.2143,168.4336) and (357.8571,164.5050) ..
    (363.9286,170.5765) .. controls (370.0000,176.6479) and (377.8571,193.4336) ..
    (376.0714,203.0765) .. controls (374.2857,212.7193) and (358.5714,253.0765) ..
    (358.2143,271.2908) .. controls (357.8571,289.5050) and (354.2857,415.9336) ..
    (357.1429,428.0765) .. controls (360.0000,440.2193) and (372.8571,450.5765) ..
    (371.7857,468.0765) .. controls (370.7143,485.5765) and (352.1429,495.2193) ..
    (342.5000,483.4336);

  % path3997
  \path[draw=cff0000,line join=miter,line cap=butt,line width=0.800pt]
    (336.7857,187.3622) .. controls (332.8571,194.8622) and (343.5714,203.7908) ..
    (343.5714,213.7908) .. controls (343.5714,223.7908) and (340.0000,436.2908) ..
    (340.7143,438.0765) .. controls (341.4286,439.8622) and (343.2143,452.0050) ..
    (339.6429,455.2193) .. controls (336.0714,458.4336) and (332.1429,465.9336) ..
    (335.0000,471.2908);

  % path3999
  \path[draw=cff0000,dash pattern=on 0.80pt off 1.60pt,line join=miter,line
    cap=butt,miter limit=4.00,line width=0.800pt] (342.8571,174.8622) --
    (337.5000,184.8622);

  % path4001
  \path[draw=cff0000,dash pattern=on 0.80pt off 1.60pt,line join=miter,line
    cap=butt,miter limit=4.00,line width=0.800pt] (336.0714,472.7193) .. controls
    (341.0714,480.9336) and (341.4286,480.9336) .. (341.4286,480.9336);

  % path3995-3
  \path[draw=c0000ff,line join=miter,line cap=butt,miter limit=4.00,line
    width=0.400pt] (341.6504,171.8155) .. controls (346.8274,167.1019) and
    (357.0471,160.9706) .. (364.3812,167.1345) .. controls (371.7153,173.2985) and
    (382.6351,192.4828) .. (380.4780,202.2725) .. controls (378.3209,212.0623) and
    (346.8385,349.1057) .. (323.1928,354.0260)(374.0414,379.2717) .. controls
    (346.7276,404.3688) and (374.8768,450.5749) .. (375.3010,471.3097) .. controls
    (375.6653,489.1197) and (354.7536,502.4038) .. (338.4962,485.8294);

  % path3997-1
  \path[draw=c0000ff,line join=miter,line cap=butt,miter limit=4.00,line
    width=0.400pt] (333.5220,182.5408) .. controls (329.5934,190.0408) and
    (340.3884,203.9789) .. (339.9506,213.9693) .. controls (339.7474,218.6044) and
    (334.1015,297.8757) .. (325.5760,316.3600)(371.8034,335.5067) .. controls
    (368.7986,354.5069) and (337.6067,430.0947) .. (337.4506,430.7550) .. controls
    (334.2710,444.2062) and (340.3077,450.3979) .. (336.7363,453.6122) .. controls
    (333.1649,456.8265) and (328.1649,470.3979) .. (331.0220,475.7550);

  % path3973-2
  \path[draw=c0000ff,dash pattern=on 0.40pt off 0.80pt,line join=miter,line
    cap=butt,miter limit=4.00,line width=0.400pt] (370.7133,336.4693) .. controls
    (371.0704,330.3979) and (324.9990,324.3265) .. (324.9990,316.8265);

  % path4057
  \path[draw=c0000ff,dash pattern=on 0.40pt off 0.80pt,line join=miter,line
    cap=butt,miter limit=4.00,line width=0.400pt] (338.2143,484.8622) .. controls
    (338.2143,484.8622) and (333.2143,479.1479) .. (331.0714,476.2908);

  % path4059
  \path[draw=c0000ff,dash pattern=on 0.40pt off 1.20pt,line join=miter,line
    cap=butt,miter limit=4.00,line width=0.400pt] (340.0000,173.0765) --
    (335.0000,181.2908);

  % path3973-5
  \path[draw=c0000ff,dash pattern=on 0.40pt off 0.80pt,line join=miter,line
    cap=butt,miter limit=4.00,line width=0.400pt] (373.5704,379.3265) .. controls
    (373.9275,373.2550) and (323.9275,360.7550) .. (323.9275,353.2550);

  % path4079
  \path[draw=cff0000,line join=miter,line cap=butt,line width=0.800pt]
    (385.3571,260.5765) .. controls (365.0000,260.3979) and (363.5714,296.6479) ..
    (364.2857,317.7193) .. controls (365.0000,338.7908) and (360.0000,395.9336) ..
    (367.5000,405.2193) .. controls (375.0000,414.5050) and (398.2630,394.4326) ..
    (402.1429,377.7193) .. controls (406.7857,357.7193) and (405.7143,260.7550) ..
    (385.3571,260.5765) -- cycle;

  % path4081
  \path[draw=c0000ff,line join=miter,line cap=butt,miter limit=4.00,line
    width=0.400pt] (322.5000,363.4336) .. controls (345.3571,348.7908) and
    (359.0910,342.0495) .. (369.6429,284.5050) .. controls (374.4764,258.1456) and
    (388.1673,266.2906) .. (390.7143,278.4336) .. controls (393.2613,290.5766) and
    (397.5000,297.7193) .. (390.7143,312.0050);

  % path4083
  \path[draw=c0000ff,line join=miter,line cap=butt,miter limit=4.00,line
    width=0.400pt] (409.6429,338.7908) .. controls (403.9286,345.2193) and
    (395.8178,357.9638) .. (394.6429,370.5765) .. controls (393.0480,387.6962) and
    (384.1819,400.2732) .. (378.2143,402.3622) .. controls (366.1114,406.5989) and
    (369.4741,395.5440) .. (373.9286,386.2907);

  % path3973-2-0
  \path[draw=c0000ff,dash pattern=on 0.40pt off 0.80pt,line join=miter,line
    cap=butt,miter limit=4.00,line width=0.400pt] (374.6418,386.8265) .. controls
    (374.9990,380.7550) and (323.5704,370.3979) .. (323.5704,362.8979);

  % path3973-2-1
  \path[draw=c0000ff,dash pattern=on 0.40pt off 0.80pt,line join=miter,line
    cap=butt,miter limit=4.00,line width=0.400pt] (408.5667,339.3265) .. controls
    (408.9238,333.2550) and (389.6381,320.0408) .. (389.6381,312.5408);

  % text4110
  \node at (370,200)  {$a_1$};

  % text4114
 \node at (270,160) {$b_1$};

  % text4118
 \node at (386,250) {$c_2$};

\end{scope}

\end{tikzpicture}

\caption{\label{dibujo-diagrama-g1e-2}Example for the case $g=1$, $e=2$.}
\end{subfigure}
\caption{}
\end{figure}

\begin{ejm}
Figure~\ref{fig:g0e3} shows this construction for the case of 
genus zero and Euler number equal to $3$. We choose three solid tori and sections with
$e_i=1$. The resulting Heegaard decomposition has genus~$2$ and therefore is not minimal,
since the manifold in question is a lens space, and as such admits a genus one decomposition.
Figure~\ref{dibujo-diagrama-g1e-2} 
shows an example of this construction for the case of $g=1,e=2$.
\end{ejm}

\section{Heegaard splitting of a plumbed graph manifold with an edge}\label{sec:2vertices}

Let $M$ be a plumbed graph manifold with an edge and two vertices. This manifold is obtained as follows.
We start with two manifolds $W_1$ and $W_2$, which are oriented $\bs^1$-bundles~$\pi_i$ over closed
 surfaces~$S_i$ of genus~$g_i$ and Euler numbers $e_i$, $i=1,2$. We take closed disks
$D_{i,0}\subset S_i$ and choose a system of curves $\mu_i,\phi_i$ on $\pi_i^{-1}(\partial D_{i,0})$ as follows:
the curve $\phi_i$ is an oriented fiber of $\pi_i$, and $\mu_i$ is the oriented boundary of a meridian disk of
$\pi_i^{-1}(\partial D_{i,0})$.

Then, $M$ is obtained by gluing $\pi_1^{-1}(\overline{S_1\setminus D_{1,0}})$ and
$\pi_2^{-1}(\overline{S_2\setminus D_{2,0}})$ along their boundaries. These boundaries are tori
$\pi_i^{-1}(\partial D_{i,0})$, $i=1,2$,
and the gluing is described by a matrix in $\GL(2;\bz)$ once ordered integral bases in
$H_1(\pi_i^{-1}(\partial E_i);\bz)$ are chosen. For the choice of $(\mu_{1,0},\phi_1)$
and  $(\mu_{2,0},\phi_2)$ the matrix is
$\pm\left(\begin{smallmatrix}
0&1\\
1&0
\end{smallmatrix}\right)$, depending
on the sign of the edge as described in~\S\ref{sec:graph-manifold}.
Since the edge is contractible, the cohomology class $o$ of \S\ref{sec:graph-manifold} vanishes and can be represented by any sign, yielding
to homeomorphic constructions.

Let us consider pairwise disjoint closed disks $D_{j,1},\dots,D_{j,n_j}\subset S_j\setminus D_{j,0}$, $j=1,2$.
Let $\check{S}_j:=\overline{S\setminus\bigcup_{i=0}^{n_j} D_{j,i}}$.
We consider two parallel sections $s_{j,1},s_{j,2}: \check{S}_j\to M_j$ of $\pi_j$
as in the previous section.

As in \S\ref{sec-eulern}, we denote  $\gamma_{j,i}:=s_1(\partial D_{j,i})$ (oriented as part of $\partial\check{S}_j$);
let $\mu_{j,i}$ be the boundary of a meridian disk of $\pi^{-1}(D_{j,i})$.
As in that section, we collect the integers $e_{j,i}$
appearing in the equalities (in homology of the boundary tori)
$\gamma_{j,i}\cdot\mu_{j,i}\cdot \phi_j ^{e_{j,i}}=1$, where
$\phi_j$ is a fiber of~$\pi_j$, and they must satisfy
$$
\sum_{i=0}^{n_j} e_{j,i}=e_j.
$$
We impose the following conditions:
\begin{itemize}
\item $\min\{n_1,n_2\}\geq 2$;
\item $|e_{j,i}|=1$;
\item $\varepsilon:=e_{1,0}=e_{2,0}$, determining the sign of the edge.
\item $\partial D_{i,0}\times I$ is not a main cylinder.
\end{itemize}

In this case, we can construct Heegaard splittings $M_1^i,\bar{M}_2^i$ of
$W_i$ as in Section~\ref{sec-eulern} using the systems of disks
$\{D_{j,0},\ldots,D_{j,n_j}\}$. To do the plumbing, we have to remove
$\pi_i^{-1}(\mathring{D}_{i,0})$ from $\bar{M}_2^i$, but as we saw before, this
operation doesn't change the topology (since it is the inverse of a float
gluing). Let's denote by $\bar{M}_2^{'i}$ the result of the removal of
$\pi_i^{-1}(\mathring{D}_{i,0})$ from $\bar{M}_2^i$.

Note that after the plumbing, $\mu_{1,0}$ is identified with $\phi_2^{\varepsilon}$, and $\mu_{2,0}^{\varepsilon}$
is identified with~$\phi_1$.
% (assuming that the edge is decorated positively).
This implies that $\gamma_{1,0}$ and $\gamma_{2,0}$ are
homologous after the plumbing (because of the choice
of the edge sign). In particular, it means that we can choose the
sections $s_{j,i}$ in such a way that $s_{1,i}(\partial D_{1,0})$ is identified
with $s_{2,i}(\partial D_{2,0})$. This way, the two Heegaard splittings are
compatible, and we can extend them to a decomposition of $M$.

Sumarizing, we have now the following decomposition:
\begin{equation}
\label{descomposicion-dos-nodos}
 M=\left(M_1^1\cup M_1^2\right)\bigcup\left(\bar{M}_2^{'1}
\cup \bar{M}_2^{'2}\right).
\end{equation}

\begin{prop}
The manifolds $M_1^1\cup M_1^2$ and $\bar{M}_2^{'1}
\cup \bar{M}_2^{'2}$ are handle bodies, i.e.,
the decomposition{\rm~\ref{descomposicion-dos-nodos}} is a Heegaard splitting of
$M$.
\end{prop}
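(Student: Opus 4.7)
The plan is to apply Proposition~\ref{float-gluing-handlebodies} separately to each pair $(M_1^1,M_1^2)$ and $(\bar M_2^{'1},\bar M_2^{'2})$ to conclude that both sides of the decomposition are handle bodies, and then to verify that they share a common boundary equal to the natural Heegaard surface. Concretely, the plumbing identifies the tori $T_{i,0}=\pi_i^{-1}(\partial D_{i,0})$ via the fiber--section swap of \S\ref{sec:graph-manifold} with sign $\varepsilon=e_{1,0}=e_{2,0}$, and the Heegaard surface of $W_i$ splits $T_{i,0}$ into two cylinders: the ``upper'' one $C_{i,0}=\partial D_{i,0}\times I\subset\partial M_1^i$ coming from the drilled body structure, and the complementary ``lower'' one $C_{i,0}'\subset\partial\bar M_2^{'i}$. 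Using the section convention specified just before the statement, $s_{1,i}(\partial D_{1,0})$ is identified with $s_{2,i}(\partial D_{2,0})$, so the plumbing identification restricts to orientation-reversing homeomorphisms $C_{1,0}\to C_{2,0}$ and $C_{1,0}'\to C_{2,0}'$.

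The float-curve condition would be checked next, and this is where the hypothesis that $\partial D_{i,0}\times I$ is not the main cylinder enters crucially. In the cutting system of $M_1^i$ from \S\ref{sec-eulern}, the handle curves $a_j,b_j$ never traverse the cylinders $\partial D_{i,k}\times I$, while each curve $c_k$ traverses only the main cylinder and $\partial D_{i,k}\times I$. Hence exactly one cutting curve crosses $C_{i,0}$ transversally at a single point, making the core of $C_{i,0}$ a float curve in $\partial M_1^i$. For $\bar M_2^{'i}$, the cutting system is the mirror one for $M_2^i$ modified by Dehn twists along the cylinders $C_{i,j}'$ with $j\ne 0$ (corresponding to the float gluings of the remaining solid tori); no twist is performed along the plumbing cylinder $C_{i,0}'$ itself, so the same count shows its core is a float curve in $\partial\bar M_2^{'i}$.

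An application of Proposition~\ref{float-gluing-handlebodies} (whose genus hypothesis $\geq 1$ is guaranteed by $\min\{n_1,n_2\}\geq 2$) then yields that $M_1^1\cup M_1^2$ and $\bar M_2^{'1}\cup\bar M_2^{'2}$ are both handle bodies of genus $2g_1+2g_2+n_1+n_2-1$. A routine Euler-characteristic check shows that the common boundary of these two handle bodies is precisely the closed oriented surface of that same genus, obtained by gluing the Heegaard surfaces of $W_1$ and $W_2$ across the plumbing annulus, which confirms the decomposition is a Heegaard splitting of $M$. The most delicate technical point I expect is the orientation check required by Proposition~\ref{float-gluing-handlebodies}: one must verify, using the sign convention $\varepsilon=e_{1,0}=e_{2,0}$ and the plumbing matrix $\varepsilon\left(\begin{smallmatrix}0&1\\1&0\end{smallmatrix}\right)$, that the induced annular identifications are orientation-reversing with respect to the boundary orientations of the handle bodies, though this should ultimately follow from the global orientability of $M$.
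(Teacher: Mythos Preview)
Your approach is essentially the same as the paper's: verify that the gluing annuli on both sides are regular neighbourhoods of float curves and then invoke Proposition~\ref{float-gluing-handlebodies}. The paper's proof is terser (it treats only $M_1^1\cup M_1^2$ and leaves the other side by symmetry), but the logic is identical.

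One small correction: your claim that ``the handle curves $a_j,b_j$ never traverse the cylinders $\partial D_{i,k}\times I$'' is not quite right as stated. In the construction of \S\ref{sec:diagramae1}--\S\ref{sec-eulern} the curves $a_j,b_j$ \emph{do} traverse a cylinder, namely the main one. What is true (and what you actually need) is that they traverse \emph{only} the main cylinder, so since $\partial D_{i,0}\times I$ is assumed not to be the main cylinder, the unique cutting curve meeting $C_{i,0}$ is the $c$-curve associated to $D_{i,0}$, and it meets it transversally in a single half-fiber. With this correction your float-curve check goes through exactly as in the paper, and the rest of your outline (the mirror argument for $\bar M_2^{'i}$, the genus count, and the orientation remark) is accurate.
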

\begin{proof}
It is enough to prove it for $M_1^1\cup M_1^2$.
We have already seen that both $M_1^1$ and $M_1^2$ are handle-bodies. We will show now that they are glued as in Proposition~\ref{float-gluing-handlebodies}. In
order to do so, we have to see that they are glued along annuli that are
neighborhoods of a float curve.

Let us consider the torus $\pi_i^{-1}(\partial D_{1,0})$ as the product of
$\mu_{1,0}$ and $\phi_1$. The curves
$s_{1,i}(\partial D_{1,0})$ are parallel curves that meet $\phi_1$ transversally at
only one point. Let
$$
A_1^1=M_1^1\cap \pi_i^{-1}(\partial
D_{1,0})
$$
be the annulus along which the gluing is made. This annulus is a neighborhood of a
curve parallel to $s_{1,i}(\partial D_{1,0})$.

From the construction in Section~\ref{sec-eulern}, we see that
$\phi_1\cap M_1^1$ is part of a cutting curve of $M_1^1$. And moreover, its the only
intersection of a cutting curve with the torus $\pi_i^{-1}(\partial D_{1,0})$.

So the annulus $A_1^1$ is a regular neighborhood of a float curve in $M_1^1$.
Analogously, $A_1^2$ is also a float curve in $M_1^2$. By
Proposition~\ref{float-gluing-handlebodies}, we get the result.
\end{proof}

% \begin{prop}
%  The decomposition{\rm~\ref{descomposicion-dos-nodos}} is a Heegaard splitting of
% $M$.
% \end{prop}

It is time now to describe a Heegaard diagram, i.e., to understand
what happens with the cutting curves during the plumbing.
Let us consider
the cylinders $A_1^1\subset{M}_1^1$ and $A_1^2\subset {M}_1^2$ which
are identified by the plumbing.

Let us fix a cutting curve $\lambda_1$ of $M_1^1$ which intersects once the
core of $A_1^1$ (a float curve). In the neighborhood of $A_1^1$, this curve is decomposed in
three connected components $\lambda_1^b,\lambda_1^c,\lambda_1^e$ where
$\lambda_1^c$ is the part of $\lambda_1$ that lies in $A_1^1$.
As in \S\ref{sec:diagramae1}, the path $\lambda_1^c$ is a half of the fiber $\phi_1$.
Analogously, the cutting curve
$\lambda_2$ in ${M}_1^2$ in a neighbourhood of $A_1^2$ can be divided in three connected components
$\lambda_2^b,\lambda_2^c,\lambda_2^e$. The path $\lambda_2^c$ is equivalent to
a half of the fiber $\phi_2$ and recall that $\phi_2$ is identified with a section $\mu_1$.

\begin{figure}[ht]

\definecolor{cffffff}{RGB}{255,255,255}
\centering
\begin{subfigure}[b]{.45\textwidth}
\begin{tikzpicture}[y=0.80pt, x=0.8pt,yscale=-.8,xscale= .8,inner sep=0pt, outer sep=0pt]

\tikzset{flecha1/.style={decoration={
  markings,
  mark=at position #1 with  {\arrowreversed{angle 90
new}}},postaction={decorate}}}
\tikzset{flecha2/.style={decoration={
  markings,
  mark=at position #1 with  {\arrow{angle 90 new}}},postaction={decorate}}}

\node at (330,132) {$\gamma_2$};
  % path4099

\draw[thick] (310,82) ellipse (91 and 35);
\draw[thick,flecha2=.5] (219,300) arc (180:0:91 and 40);
\draw[thick,dash pattern=on 2.40pt off 0.80pt] (219,300) arc (-180:0:91 and 40);

  % path4869
  \path[draw=black,line join=miter,line cap=butt,line width=0.800pt]
    (219,82) -- (219,300);

  % path4869-9
  \path[draw=black,line join=miter,line cap=butt,line width=0.800pt]
    (401,82) -- (401,300);

  % path4889-9

  % path4909
  \path[draw=black,line join=miter,line cap=butt,line width=0.800pt,flecha1=.7]
    (219,300) .. controls (213,224) and (400,237) ..
    (401,193);

  % path4909-2
  \path[draw=black,dash pattern=on 1pt off 2pt,line join=miter,line
    cap=butt,miter limit=4.00,line width=0.800pt] (219,82) .. controls
    (224,135) and (402,151) .. (400,193);

  % path4909-2-8
  \path[draw=black,dash pattern=on 1pt off 2pt,line join=miter,line
    cap=butt,miter limit=4.00,line width=0.800pt] (401,300) .. controls
    (401.5,235) and (218,232) .. (219,193);

  % path4909-27
  \path[draw=black,line join=miter,line cap=butt,line width=0.800pt,flecha2=.7]
    (401,82) .. controls (396,157) and (220,143) ..
    (219,193);

  % path9442
  \path[draw=black,line join=miter,line cap=butt,line width=0.800pt,flecha2=.8]
    (411,300) -- (411,82);

  % text9796
  \node at (420,106.17125) {$\phi_1$};

  % text9800
  \node at (364.66507,230.09802)
    {$\gamma_1$};

  % text9816
  \node at (300,350)  {$\mu_1$};

\path[flecha1=.2,draw=black,line join=miter,line cap=butt,miter limit=4.00,line
    width=1.360pt](435,164)-- (401,193);

\path[flecha1=.5,draw=black,line join=miter,line cap=butt,miter limit=4.00,line
    width=1.360pt](219,193) -- (172,161);
 % path4889-7
  \path[draw=black,line join=miter,line cap=butt,miter limit=4.00,line
    width=1.360pt,flecha1=.7](401,193) .. controls (399,204) and
    (356,213) .. (316,214) .. controls (268,213) and
    (218,204) .. (219,193);

  % path4147

  % text9812-9
\node at  (300,200)    {$\lambda_2^c$};

  % text9812-9-2
\node at (183,150)    {$\lambda_2^b$};

  % path6462
  \path[draw=black,dash pattern=on 3pt off 5.5pt,line join=miter,line
    cap=butt,miter limit=4.00,line width=1.360pt,red] (219,192) .. controls
    (222,195) and (220,260) .. (226,250) .. controls
    (231,260) and (252,235) .. (273,236) .. controls
    (294,220) and (402,212) .. (401,192);

  % text9812-9-5
\node at (423,190)    {$\lambda_2^e$};

  % text9812
\node at (199,245)     {$\lambda_1^c$};

  % path9942
\path[flecha2=.1,flecha2=.5,flecha2=.95,draw=black,line join=miter,line cap=butt,miter limit=4.00,line
    width=1.360pt] (151,275) -- (210,300) -- (210,198)
    -- (161,220);

  % text9812-5
\node at (183,305)     {$\lambda_1^b$};

  % text9812-0
\node at  (183,195) {$\lambda_1^e$};

\end{tikzpicture}
\caption{Gluing of $M_1^i$, $e_{i,0}=1$.}
\label{fig:cilindroplumbing}
\end{subfigure}
\begin{subfigure}[b]{.45\textwidth}
\begin{tikzpicture}[y=0.80pt, x=0.8pt,yscale=-.8,xscale= .8, inner sep=0pt, outer sep=0pt]

\tikzset{flecha1/.style={decoration={
  markings,
  mark=at position #1 with  {\arrowreversed{angle 90
new}}},postaction={decorate}}}
\tikzset{flecha2/.style={decoration={
  markings,
  mark=at position #1 with  {\arrow{angle 90 new}}},postaction={decorate}}}

\node at (330,132) {$\gamma_2$};
  % path4099

\draw[thick] (310,82) ellipse (91 and 35);
\draw[thick,flecha2=.5] (219,300) arc (180:0:91 and 40);
\draw[thick,dash pattern=on 2.40pt off 0.80pt] (219,300) arc (-180:0:91 and 40);

  % path4869
  \path[draw=black,line join=miter,line cap=butt,line width=0.800pt]
    (219,82) -- (219,300);

  % path4869-9
  \path[draw=black,line join=miter,line cap=butt,line width=0.800pt]
    (401,82) -- (401,300);

  % path4889-9

  % path4909
  \path[draw=black,line join=miter,line cap=butt,line width=0.800pt,flecha1=.7]
    (219,300) .. controls (213,224) and (400,237) ..
    (401,193);

  % path4909-2
  \path[draw=black,dash pattern=on 1pt off 2pt,line join=miter,line
    cap=butt,miter limit=4.00,line width=0.800pt] (219,82) .. controls
    (224,135) and (402,151) .. (400,193);

  % path4909-2-8
  \path[draw=black,dash pattern=on 1pt off 2pt,line join=miter,line
    cap=butt,miter limit=4.00,line width=0.800pt] (401,300) .. controls
    (401.5,235) and (218,232) .. (219,193);

  % path4909-27
  \path[draw=black,line join=miter,line cap=butt,line width=0.800pt,flecha2=.7]
    (401,82) .. controls (396,157) and (220,143) ..
    (219,193);

  % path9442
  \path[draw=black,line join=miter,line cap=butt,line width=0.800pt,flecha2=.8]
    (411,300) -- (411,82);

  % text9796
  \node at (422,106.17125) {$\phi_1$};

  % text9800
  \node at (364.66507,230.09802)
    {$\gamma_1$};

  % text9816
  \node at (300,350)  {$\mu_1$};

  % path4889-7
  \path[flecha1=.45,draw=black,dash pattern=on 2.40pt off 0.80pt,line join=miter,line cap=butt,miter limit=4.00,line
    width=1.360pt] (401,193) .. controls (399,172) and
    (356,163) .. (316,164) .. controls (268,163) and
    (218,174) .. (219,193);
\path[flecha1=.5,draw=black,line join=miter,line cap=butt,miter limit=4.00,line
    width=1.360pt](435,164)-- (401,193);
\path[flecha1=.5,draw=black,line join=miter,line cap=butt,miter limit=4.00,line
    width=1.360pt](219,193) -- (172,225);

%
  % path4147

  % text9812-9
\node at  (300,177)    {$\lambda_2^c$};

  % text9812-9-2
\node at (183,230)    {$\lambda_2^b$};

  % path6462
  \path[draw=blue,dash pattern=on 3pt off 5.5pt,line join=miter,line
    cap=butt,miter limit=4.00,line width=1.360pt] (219,192) .. controls
    (232,156) and (220,111) .. (226,112) .. controls
    (231,112) and (252,137) .. (273,143) .. controls
    (294,150) and (402,167) .. (401,188);

  % text9812-9-5
\node at (423,190)    {$\lambda_2^e$};

  % text9812
\node at (199,125)     {$\lambda_1^c$};

  % path9942
  \path[flecha2=.05,flecha2=.55,flecha2=.85,draw=black,line join=miter,line cap=butt,miter limit=4.00,line
    width=1.360pt] (161,56) -- (210,79) -- (210,187)
    -- (161,164);

  % text9812-5
\node at (183,82)     {$\lambda_1^b$};

  % text9812-0
\node at  (183,160) {$\lambda_1^e$};

\end{tikzpicture}
\caption{Gluing of $M_2^{'i}$, $e_{i,0}=1$.}
\label{fig:cilindroplumbing2}
\end{subfigure}
\caption{}
\end{figure}

Let us decompose $\gamma_1=\lambda_1^\gamma\cdot\lambda_1^{'\gamma}$ in two halves
where $\lambda_1^\gamma$ is the bottom part in  Figure~\ref{fig:cilindroplumbing}. If $e_{i,0}=1$,
we can check that
$\lambda_2^c$ can be isotoped inside $A_1^1$ to $(\lambda_1^c)^{-1}$ followed by
%$\lambda_2^\gamma$, which is
$(\lambda_1^\gamma)^{-1}$,
see Figure~\ref{fig:cilindroplumbing}. That means that the new cutting curve~$\bar{\lambda}_1$ has
two connected components near $A_1^1\equiv A_1^2$; one is $\lambda_2^b\cdot\lambda_1^e$, and
the other one is $\lambda_1^b\cdot(\lambda_1^\gamma)^{-1}\cdot\lambda_2^e$.

We perform a similar argument for the gluing of $M_1^{'2}$ and $M_2^{'2}$.
In this case we consider the other annuli $A_2^1\subset M_1^{'2}$
and $A_2^2\subset M_2^{'2}$ which become identified; they are the other
parts of the plumbing tori.
Let us choose cutting curves $\lambda'_1,\lambda'_2$ which go parallel
near the annuli to $\lambda_1,\lambda_2$; in order to emphasize it, we keep the above notation
for their decomposition in the neighborhood of the annuli,
see Figure~\ref{fig:cilindroplumbing2}. Assuming again $e_{i,0}=1$, we see that
$\lambda_1^c$ can be isotoped inside $A_2^1$ to $\lambda_2^c$ followed by
%$\lambda_2^\gamma$, which is
$\lambda_1^{'\gamma}$; note that the isotopy is done in the back part of $A_2^1$
if Figure~\ref{fig:cilindroplumbing2}.
The new cutting curve~$\bar{\lambda}_2$ has
two connected components near $A_2^1\equiv A_2^2$; one is $\lambda_2^b\cdot\lambda_1^e$, as before, and
the other one is $\lambda_1^b\cdot\lambda_1^{'\gamma}\cdot\lambda_2^e$.

% With an analogous reasoing, we get that the other cutting curve is obtained by
% concatenating $\lambda_2^b$ with $\lambda_1^e$, and $\lambda_1^b$ with
% ${\lambda'}_2^\gamma$ followed by $\lambda_2^e$. But in this case,
% ${\lambda'}_2^\gamma$ is the other half of $e_i\gamma$ (that is, run in the
% oposite direction) because the isotopy is not made inside $C_i$ but inside the
% other part of the torus. Figure~\ref{fig:cilindroplumbing2} shows this.

As we see in Figures~\ref{fig:cilindroplumbing} and~\ref{fig:cilindroplumbing2},
some of the ends do not fit; in order for them to fit we have to do a half-turn around
$\gamma_1$ in the suitable direction. Since we have freedom
to choose the product structure in the annulus, this is equivalent
to keep the intersection of the red curves as fibers, while
the intersection of the blue curves perform  a
full loop. To be precise, since
the curve $\bar{\lambda}_2\cdot(\bar{\lambda}_1)^{-1}$ equals $\gamma_1$
near the plumbing (in homology), for $e_{i,0}=1$ the curve $\lambda_2$ turns as $\gamma_1$
(when going from the first vertex to the second one), see
Figure~\ref{fig:plumbingtwist}. It is easily seen that it turns as $\gamma_1^{-1}$ for
$e_{i,0}=-1$.

\begin{figure}[ht]
\centering
\begin{tikzpicture}[ inner sep=0pt, outer sep=0pt]

\tikzset{flecha1/.style={decoration={
  markings,
  mark=at position #1 with  {\arrowreversed{angle 90
new}}},postaction={decorate}}}
\tikzset{flecha2/.style={decoration={
  markings,
  mark=at position #1 with  {\arrow{angle 90 new}}},postaction={decorate}}}
\def\nextAngle{0}
\tikzset{
    next angle/.style={
        in=#1+180,
        out=\nextAngle,
        prefix after command= {\pgfextra{\def\nextAngle{#1}}}
    },
    start angle/.style={
        out=#1,
        nangle=#1,
    },
    nangle/.code={% used only internally
        \def\nextAngle{#1}
    }
}
\def\recta{\draw[dashed] (-3,-1) -- (-2,-1);
\draw[very thick] (-2,-1) -- (2,-1);
\draw[dashed] (2,-1) -- (3,-1)}
\recta;
\begin{scope}[yshift=2cm]
\recta;
\end{scope}
\draw[dashed] (0,1) arc (90:-90:.5 and 1);
\draw[very thick,flecha1=.5] (0,1) arc (90:270:.5 and 1);
\begin{scope}[yshift=1.5cm]
\draw[dashed,red] (-3,-1) -- (-2,-1);
\draw[red,flecha2=.2] (-2,-1) -- (2,-1);
\draw[dashed,red] (2,-1) -- (3,-1);
\end{scope}
\begin{scope}[yshift=.5cm]
\draw[dashed,blue] (-3,-1) -- (-2,-1);
\draw[blue,flecha2=.7] (-2,-1) -- (-.8,-1);
\draw[blue] (.8,-1) -- (2,-1);
\draw[dashed,blue] (2,-1) -- (3,-1);
\end{scope}
%\draw[blue] plot [smooth] coordinates {(-.8,-.5) (-.6,-.4) (-.5,0)(-.4,.5) (0,1) } ;
\draw [blue] (-.8,-.5) to[start angle=0,next angle=80] (-.7,0)
to[next angle=70] (-.6,.5) to [next angle=0]  (-.2,1);
\draw [dashed,blue] (-.2,1) to[start angle=0,next angle=-80]  (0,.5) to [next angle=-90] (.1,0);
\draw [dashed,blue] (.1,0) to[start angle=-90,next angle=-80]  (.2,-.5) to [next angle=0] (.5,-1);
\draw [blue] (.4,-1) to[start angle=0,next angle=0] (.8,-.5);
\node at (-3.3,.5) {$\bar{\lambda}_1$};
\node at (-3.3,-.5) {$\bar{\lambda}_2$};
\node at (-.2,0) {$\gamma_1$};
\end{tikzpicture}
\caption{Cutting curves for $e_{i,0}=1$.}
\label{fig:plumbingtwist}
\end{figure}

\begin{ejm}\label{ejm:e22}
Figure~\ref{fig:diagramaplumbing} illustrates the case of two vertices with genus zero and both
with Euler number~$-2$. Note that we take $n_1=n_2=1$ and $e_{i,j}=-1$.
\end{ejm}

\begin{figure}[ht]

\definecolor{cff0000}{RGB}{255,0,0}
\definecolor{c0000ff}{RGB}{0,0,255}

\begin{tikzpicture}[y=0.80pt, x=0.8pt,yscale=0.5, inner sep=0pt, outer sep=0pt]
  % path7561
\tikzset{flecha1/.style={decoration={
  markings,
  mark=at position #1 with  {\arrowreversed{angle 90
new}}},postaction={decorate}}}
\tikzset{flecha2/.style={decoration={
  markings,
  mark=at position #1 with  {\arrow{angle 90 new}}},postaction={decorate}}}
\def\vertical{\path[draw=black,line join=miter,line cap=butt,line width=1pt] (220,117) .. controls (228,126) and (230,181) .. (220,193)}
\def\tubo{\vertical;\begin{scope}[xscale=-1,xshift=-370pt]
\vertical;
\end{scope}}
\def\bola{\path[draw=black,line join=miter,line cap=butt,line width=1pt] (262,141) .. controls (252,145) and (244,143) .. (237,140);
\path[draw=black,line join=miter,line cap=butt,line width=1pt] (225,136) .. controls (185,112) and (193,48) .. (253,49) .. controls (313,49) and (324,114) .. (273,138);
}
\def\gii{\tubo
\begin{scope}[xshift=29pt]
\tubo;
\end{scope}
\bola
\begin{scope}[yscale=-1,yshift=-249pt]
\bola;
\end{scope}
}
\def\corte{\draw[line width=.8pt,color=cff0000] (249,154) ellipse (20 and 58);
  % path7711-9
\path[draw=c0000ff,line join=miter,line cap=butt,line width=0.8pt] (263,147) .. controls (264,131) and (272,83) .. (249,87) .. controls (226,91) and (225,130) .. (226,142);
\path[draw=c0000ff,dash pattern=on 0.60pt off 1.20pt,line join=miter,line cap=butt,miter limit=4.00,line width=0.8pt] (272,163) .. controls (271,156) and (263,156) .. (263,147);
\path[draw=c0000ff,dash pattern=on 0.60pt off 1.20pt,line join=miter,line cap=butt,miter limit=4.00,line width=0.8pt] (236,163) .. controls (236,157) and (225,152) .. (226,142);
\path[draw=c0000ff,line join=miter,line cap=butt,line width=0.8pt] (236,163) .. controls (232,179) and (228,223) .. (251,223) .. controls (265,223) and (270,183) .. (272,163);
}
\gii
\corte
\begin{scope}[xshift=100pt]
\gii
\corte
\end{scope}
  % path7711

   % path7768
\path[draw=black,line join=miter,line cap=butt,line width=1pt,flecha2=1] (442,155) -- (475,155);
%   % path7561-93

\begin{scope}[xshift=213pt]
\tubo;
\end{scope}
\begin{scope}[xshift=330pt]
\tubo;
\end{scope}

\def\trozo1{\path[draw=black,line join=miter,line cap=butt,line width=1pt] (492,176) .. controls (451,198) and (459,259) .. (519,258) .. controls (549,258) and (552,226) .. (571,224)(502,172) .. controls(510,169) and (519,169) .. (527,170).. controls (531,171) and (536,170).. (540,172).. controls (551,178) and (558,199) .. (571,200) ;}

\trozo1
\begin{scope}[xscale=-1,xshift=-913]
\trozo1
\end{scope}

\begin{scope}[yscale=-1,yshift=-250]
\trozo1
\begin{scope}[xscale=-1,xshift=-913]
\trozo1
\end{scope}
\end{scope}

\begin{scope}[xscale=-1,xshift=-912,yscale=-1,yshift=-252]
\path[draw=cff0000,line join=miter,line cap=butt,miter limit=4.00,line width=0.8pt] (516.0734,97.7900) .. controls (503.5573,97.2606) and (497.4582,129.0761) .. (496.9107,155.4737) .. controls (497.4582,193.6036) and (502.9333,213.1574) .. (516.6209,213.1574) .. controls (530.3086,213.1574) and (554.3989,212.1798) .. (576.1623,213.1574) .. controls (605.5907,219.0236) and (612.7083,221.9567) .. (626.3960,221.9567) .. controls (640.0836,221.9567) and (643.3687,182.8490) .. (643.3687,163.2952) .. controls (643.3687,143.7414) and (637.9359,106.9730) .. (625.8485,106.5892) .. controls (585.9721,105.3230) and (543.9208,98.9679) .. (516.0734,97.7900) -- cycle;
  % path7735-2
\path[draw=c0000ff,line join=miter,line cap=butt,line width=0.8pt] (501.2908,164.2729) .. controls (498.5532,179.9160) and (493.6257,223.9120) .. (516.8947,223.9120) .. controls (530.8561,223.9120) and (547.2813,214.1351) .. (565,227);
  % path7739-5
\path[draw=c0000ff,dash pattern=on 0.60pt off 1.20pt,line join=miter,line cap=butt,miter limit=4.00,line width=0.8pt] (501.2908,167.2060) .. controls (502.3858,158.4068) and (491.9831,153.5183) .. (491.9831,142.7637);
  % path7733-8-5
\path[draw=c0000ff,line join=miter,line cap=butt,line width=0.8pt] (638.4976,157.0533) .. controls (639.0451,140.4325) and (647.8053,92.5257) .. (624.8100,96.4365) .. controls (601.8147,100.3473) and (595.2446,98.3919) .. (575.9168,95.1579) .. controls (559.8740,89.9685) and (537.9737,84.1023) .. (514.9784,88.0131) .. controls (491.9832,91.9239) and (491.4356,131.0315) .. (491.9832,143.7414);
  % path7735-1-9
\path[draw=c0000ff,line join=miter,line cap=butt,line width=0.8pt] (580,197) .. controls (596.3396,199.0940) and (603.4572,232.3354) .. (626.7262,232.3354) .. controls (640.6877,232.3354) and (646.1627,192.2501) .. (647.2577,172.6963);
  % path7737-0-3
\path[draw=c0000ff,dash pattern=on 0.60pt off 1.20pt,line join=miter,line cap=butt,miter limit=4.00,line width=0.8pt] (646.7102,172.6963) .. controls (646.7102,165.8525) and (639.0451,165.8525) .. (639.0451,156.0756);
  % path7737-0-3-0
\path[draw=c0000ff,dash pattern=on 0.60pt off 1.20pt,line join=miter,line cap=butt,miter limit=4.00,line width=0.8pt] (565,227) .. controls (576.5664,225.5479) and (571.6388,197.1949) .. (580,197);
\end{scope}

\end{tikzpicture}
\caption{Heegaard diagram of the plumbing of two
manifolds with $g=0,e=-2$}
\label{fig:diagramaplumbing}
\end{figure}

\section{Heegaard splittings of arbitrary plumbed graphs}\label{sec:arbitrary}

In this section, we consider an arbitrary plumbing graph
$(\Gamma,g,e,o)$; for the plumbing construction we fix
an explicit cocycle representing~$o$, consisting
on assigning a sign $e_\eta$ to each edge~$\eta$.

Fix a vertex~$v$ with valency $d_v$; this vertex is associated
with a fibration $\pi_v:M_v\to S_v$; we choose $d_v+n_v$ pairwise
disjoint closed disks in $S_v$, determining solid tori in $M_v$.
The first $d_v$ disks are assigned to a fixed edge $\eta$ having
$v$ as an endpoint. As in \S\ref{sec:2vertices}, the first $d_v$ disks
will have associated numbers $e_{\eta}$, and the remaining disks
numbers $e_{v,j}$, $j=1,\dots,n_v$, such
that their absolute value equals~$1$, and
\[
\sum_{v\in\partial\eta} e_{\eta}+\sum_{j=1}^{n_v} e_{v,j}=e_v.
\]
In general one of the extra disks will correspond to the main cylinder,
hence $n_v\geq 1$. The only exception to this rule is the case
$g_v=0$, $d_v=2$, since in this case the main cylinder plays no special role.

If $\Gamma$ is a tree it is enough to iterate the construction
of \S\ref{sec:2vertices}. Note also that there is no restriction
for the choice of the cocyle.
% in particular, one can try
% to choose an optimal cocycle which yields minimal genus for the decomposition.

% The procedure explained in \S\ref{sec:2vertices} can be iterated as many times as we need to construct the
% Heegaard diagram of any graph manifold without cycles.
% Note that in Example~\ref{ejm:e22}
% we keep the main cylinders
% %(see Remark~\ref{obs:primary-cylinder})
% of each fibration in the plumbing manifold
% which helps to obtain less complex Heegaard diagrams, since for all plumbings, we will be in the situation
% of Remark~\ref{obs:handle-1}. Depending on the topology of
% the graph and on the Euler numbers, sometimes it will be better to use all the cylinders for the
% plumbings (it is not necessary but the genus of the decomposition will be smaller), and we will apply
% Remark~\ref{obs:handle-vv} for the construction of the Heegaard diagrams. In \S\ref{sec:ejm} we will illustrate
% this process in some interesting examples.

% As a summary of the previous constructions we are going to state two theorems which describe explicit Heegaard
% splittings of graph manifolds with their systems of cutting curves. Let us introduce some notation.

Let us consider now the general case where the graph may have cycles.
We start by the choice of a cocycle and  a spanning tree, for which
we proceed as above. Let us now explain the effect of plumbing
along the remaining edges.

% Note that the process presented in the previous section can be iterated. As
% long as we have enough disks $D_{j_i}$ as before, we can always use one of them
% to do the plumbing. And since, as shown before, the process of plumbing along
% the preimages of these disks allows us to extend the Heegaard splitting of
% each fibered piece to the whole plumbing manifold.

As we saw in Proposition~\ref{float-gluing-una}, the process is different when the plumbing
closes a cycle in the graph, since in that case the gluing process is done between two float
curves of the same handlebody; specially, the way of constructing cutting curve systems changes.
Proposition~\ref{float-gluing-una} proves that this process produces also
a Heegaard splitting (where the genus remains unchanged).

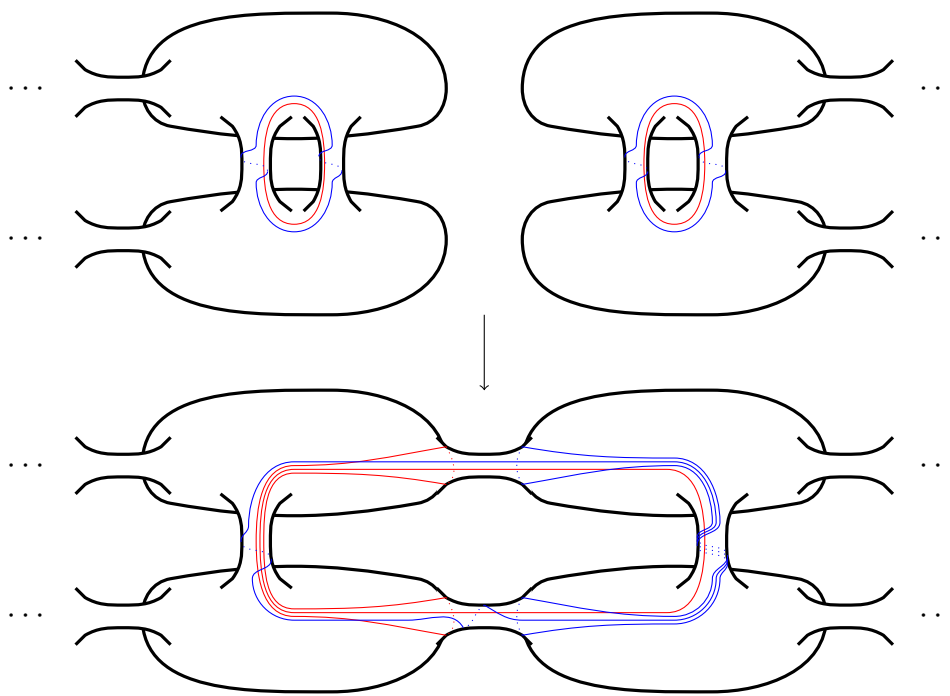
\begin{figure}[ht]

\definecolor{cff0000}{RGB}{255,0,0}

\begin{tikzpicture}[scale=0.5]

\def\geno{

    \draw[very thick] (2.25+1,1) to [out=10,in=270] (3+1,2.0)
    to [out=90, in=0] (0+1,4.0)
    to [out=180, in=90] (-3-1, 2)
    to [out=270, in=170] (-2.25-1,1) to[out=-10,in=190] (2.25+1,1);}
\def\asa{
    \fill[white]
    (1.5,3.0)
    to [out=150, in=270] (1.0, 4.0)
    to [out=90, in=0.0] (0.0, 5.0)
    to [out=180, in=90] (-1.0,4.0)
    to [out=270, in=30] (-1.5,3.0)--
    (-0.25,3.0)
    to [out=150, in=270] (-0.5,4.0)
    to [out=90, in=180] (0,4.5)
    to [out=0, in=90] (0.5,4.0)
    to [out=270, in=30] (0.25, 3.0);

    \draw[very thick] (1.5,3.0)
    to [out=150, in=270] (1.0, 4.0)
    to [out=90, in=0.0] (0.0, 5.0)
    to [out=180, in=90] (-1.0,4.0)
    to [out=270, in=30] (-1.5,3.0);

    \draw[very thick] (0.25,3.0)
    to [out=30, in=270] (0.5,4.0)
    to [out=90, in=0] (0,4.5)
    to [out=180, in=90] (-0.5,4.0)
    to [out=270, in=150] (-0.25, 3.0);
    }

\def\tubo{
    \fill[white] (-0.75,1.25) -- (-0.5,1)
    to [out=300, in=90] (-0.3,0)
    to [out=270, in = 60] (-0.5,-1)
    -- (-0.75,-1.25)-- (0.75,-1.25) -- (0.5,-1)
    to [out=-240, in=-90] (0.3,0)
    to [out=-270, in = -120] (0.5,1)
    -- (0.75,1.25)-- (-0.75,1.25);

    \draw[ very thick] (-0.75,1.25) -- (-0.5,1)
    to [out=300, in=90] (-0.3,0)
    to [out=270, in = 60] (-0.5,-1)
    -- (-0.75,-1.25);
    \draw[very thick] (0.75,1.25) -- (0.5,1)
    to [out=240, in=90] (0.3,0)
    to [out=270, in = 120] (0.5,-1)
    -- (0.75,-1.25);
    }

\def\tubogordo{
    \fill[white] (-1.75,1.25) -- (-1.5,1)
    to [out=300, in=90] (-1.3,0)
    to [out=270, in = 60] (-1.5,-1)
    -- (-1.75,-1.25)-- (1.75,-1.25) -- (1.5,-1)
    to [out=-240, in=-90] (1.3,0)
    to [out=-270, in = -120] (1.5,1)
    -- (1.75,1.25)-- (-1.75,1.25);

    \draw[ very thick] (-1.75,1.25) -- (-1.5,1)
    to [out=300, in=90] (-1.3,0)
    to [out=270, in = 60] (-1.5,-1)
    -- (-1.75,-1.25);
    \draw[very thick] (1.75,1.25) -- (1.5,1)
    to [out=240, in=90] (1.3,0)
    to [out=270, in = 120] (1.5,-1)
    -- (1.75,-1.25);
    }

\begin{scope}[xshift=-5cm]
	\geno

	\begin{scope}[yscale=-1]
	\geno
	\end{scope}

	\begin{scope}[xshift=-1cm,xscale=1.25]
	\tubo
	\end{scope}

	\begin{scope}[xshift=1cm]
	\tubo
	\end{scope}

	 \begin{scope}[rotate=90,yshift=4.5cm,xshift=2cm]
	\tubo
	\end{scope}
	 \begin{scope}[rotate=90,yshift=4.5cm,xshift=-2cm]
	\tubo
	\end{scope}

\end{scope}

\begin{scope}[xshift=5cm]
	\geno

	\begin{scope}[yscale=-1]
	\geno
	\end{scope}

	\begin{scope}[xshift=-1cm]
	\tubo
	\end{scope}

	\begin{scope}[xshift=1cm,xscale=1.25]
	\tubo
	\end{scope}

	 \begin{scope}[rotate=90,yshift=-4.5cm,xshift=2cm]
	\tubo
	\end{scope}
	 \begin{scope}[rotate=90,yshift=-4.5cm,xshift=-2cm]
	\tubo
	\end{scope}
\end{scope}

\def\curvaroja{
\draw[red] (0.0, 1.6)
to [out=180, in=90] (-0.8, 0.0)
to [out=270, in=180] (0.0, -1.6)
to [out=0, in=270] (0.8, 0.0)
to [out=90, in=0] (0.0, 1.6);
}

\begin{scope}[xshift=-5cm]
\curvaroja
\end{scope}

\begin{scope}[xshift=5cm]
\curvaroja
\end{scope}

\begin{scope}[xshift=-5cm]
\draw[blue] (0.65, 0.2)
to [out=90, in=270] (1.0, 0.5)
to [out=90, in=0] (0.0, 1.8)
to [out=180, in=90] (-1.0, 0.5)
to [out=270, in=90] (-1.4, 0.2);
\draw[blue, dotted] (-1.4, 0.2)
to [out=270, in=90] (-0.7, -0.2);
\draw[blue]  (-0.7,-0.2)
to [out=270, in=90] (-1.0, -0.5)
to [out=270, in=180] (0.0, -1.8)
to [out=0, in=270] (1.0, -0.5)
to [out=90, in=270] (1.25, -0.2);
\draw[blue, dotted] (1.25, -0.2)
to [out=90, in=270] (0.65, 0.2);
\end{scope}

\begin{scope}[xshift=5cm]
\draw[blue] (0.65, 0.2)
to [out=90, in=270] (1.0, 0.5)
to [out=90, in=0] (0.0, 1.8)
to [out=180, in=90] (-1.0, 0.5)
to [out=270, in=90] (-1.3, 0.2);
\draw[blue, dotted] (-1.3, 0.2)
to [out=270, in=90] (-0.7, -0.2);
\draw[blue]  (-0.7,-0.2)
to [out=270, in=90] (-1.0, -0.5)
to [out=270, in=180] (0.0, -1.8)
to [out=0, in=270] (1.0, -0.5)
to [out=90, in=270] (1.35, -0.2);
\draw[blue, dotted] (1.35, -0.2)
to [out=90, in=270] (0.65, 0.2);
\end{scope}

\node[] at (-12,2) {$\cdots$};
\node[] at (-12,-2) {$\cdots$};
\node[] at (12,2) {$\cdots$};
\node[] at (12,-2) {$\cdots$};

\draw[->] (0,-4) -- (0,-6);

\begin{scope}[yshift=-10cm,xshift=-5cm]
	\geno

	\begin{scope}[yscale=-1]
	\geno
	\end{scope}

	\begin{scope}[xshift=-1cm,xscale=1.25]
	\tubo
	\end{scope}

	 \begin{scope}[rotate=90,yshift=4.5cm,xshift=2cm]
	\tubo
	\end{scope}
	 \begin{scope}[rotate=90,yshift=4.5cm,xshift=-2cm]
	\tubo
	\end{scope}

\end{scope}

\begin{scope}[yshift=-10cm,xshift=5cm]
	\geno

	\begin{scope}[yscale=-1]
	\geno
	\end{scope}

	\begin{scope}[xshift=1cm,xscale=1.25]
	\tubo
	\end{scope}

	 \begin{scope}[rotate=90,yshift=-4.5cm,xshift=2cm]
	\tubo
	\end{scope}
	 \begin{scope}[rotate=90,yshift=-4.5cm,xshift=-2cm]
	\tubo
	\end{scope}

	\begin{scope}[rotate=90,xshift=2cm,yshift=5cm]
	\tubo
	\end{scope}
	\begin{scope}[rotate=90,xshift=-2cm,yshift=5cm]
	\tubo
	\end{scope}

\end{scope}

\node[] at (-12,-8) {$\cdots$};
\node[] at (-12,-12) {$\cdots$};
\node[] at (12,-8) {$\cdots$};
\node[] at (12,-12) {$\cdots$};

\draw[red] (-5.9,-10) to[out=90,in=180] (-5,-8.1)
--(4.8,-8.1) to[out=0,in=90] (5.8,-10)
to[out=270,in=0] (4.8,-11.9)
--(-5,-11.9) to [out=180,in=270] (-5.9,-10);
\draw[red] (-1,-12.5)
to[out=170,in=0] (-5,-12)
to[out=180,in=270] (-6,-10)
to[out=90,in=180] (-5,-8)
to[out=0,in=190] (-1,-7.5);
\draw[red,dotted] (-1,-7.5)
to[out=300,in=30] (-1,-8.5);
\draw[red] (-1,-8.5)
to[out=170,in=0] (-5,-8.2)
to[out=180,in=90] (-5.8,-10)
to[out=270,in=180] (-5,-11.8)
to[out=0,in=190] (-1,-11.5);
\draw[red,dotted] (-1,-11.5)
to[out=300,in=30] (-1,-12.5);

\draw[blue] (-6.4,-10)
to[out=90, in=270] (-6.2,-9.6)
to[out=90,in=180] (-5,-7.9)
to[out=0,in=180] (5,-7.9)
to[out=0,in=90] (6.1,-9.5)
to[out=270,in=90](5.6,-10);
\draw[blue,dotted] (5.6,-10)
to[out=270,in=90] (6.4,-10.5);
\draw[blue] (6.4,-10.5)
to[out=270,in=90] (6.1,-11)
to[out=270,in=0] (5,-12.1)
to [out=180,in=0] (1,-12.1)
to[out=180,in=-30] (0,-11.7);
\draw[blue,dotted] (0,-11.7)
to[out=210,in=30] (-0.5,-12.3);
\draw[blue] (-0.55,-12.3)
to[out=120,in=0] (-2,-12.1)
to[out=180,in=0] (-5,-12.1)
to[out=180,in=270] (-6.1,-11)
to[out=90,in=270] (-5.6,-10.5);
\draw[blue,dotted] (-5.6,-10.5)
to[out=90,in=270] (-6.4,-10);

\draw[blue] (5.6,-10.1)
to[out=90,in=270] (6.2,-9.5)
to[out=90,in=0] (5,-7.8)
to[out=180,in=350] (1,-7.5);
\draw[blue,dotted] (1,-7.5)
to[out=240,in=120] (1,-8.5);
\draw[blue] (1,-8.5)
to[out=10,in=180] (5,-8)
to[out=0,in=90] (6,-9.5)
to[out=270,in=90] (5.6,-9.9);
\draw[blue,dotted] (5.6,-10.1)
to[out=270,in=90] (6.4,-10.6);
\draw[blue,dotted] (5.6,-9.9)
to[out=270,in=90] (6.4,-10.4);
\draw[blue] (6.4,-10.6)
to[out=270,in=90] (6.2,-11)
to[out=270,in=0] (5,-12.2)
to[out=180,in=10] (1,-12.5);
\draw[blue,dotted] (1,-12.5)
to[out=120,in=240] (1,-11.5);
\draw[blue] (1,-11.5)
to[out=-10,in=180] (5,-12)
to[out=0,in=270] (6,-11)
to[out=90,in=270] (6.4,-10.4);

\end{tikzpicture}

\caption{Float gluing that closes a cycle}
\label{fig:curvasciclo}
\end{figure}

How to obtain the cutting curves
is explained in Remark~\ref{obs:cutting-ciclo}. Figure~\ref{fig:curvasciclo} describes
this process in our case, showing how to obtain the new pair of cutting
curves from the ones that existed before the plumbing.
The first pair of cutting (red and blue) curves is obtained as in the
tree case: they are obtained as connected sum of the preexistent ones.
The second pair of cutting curves is constructed as explained
in Remark~\ref{obs:cutting-ciclo}, as the union of two parallel
copies of a preexistent curve and the boundaries of the identified annuli.

\section{Explicit examples}\label{sec:ejm}

Let us consider some examples of graph manifolds for which we will give a Heegaard splitting. These examples come from links of normal surface singularities.

\begin{ejm}
Let $M$ be the link of the $\mathbb{A}_n$ singularity, which
is a lens space~$L(n,n-1)$. The graph of this manifold is a linear tree with $n-1$
vertices with $([0],-2)$ decorations.

\begin{figure}[ht]
\begin{center}
 \begin{tikzpicture}[scale=2,
vertice/.style={draw,circle,fill,minimum size=0.2cm,inner
sep=0}
]
\draw (-2,0) --(-.5,0) ;
\node[vertice] at (-2,0) {};
\node[vertice] at (-1,0) {};
\node[vertice] at (2,0) {};
\draw[dashed] (-.5,0) --(1.5,0) ;
\draw (1.5,0) --(2,0) ;
\node[below] at (-2,0) {$[0],-2$};
%\node[above] at (0,0) {$e=1$};
\node[below] at (-1,0) {$[0],-2$};
%\node[above right] at (1,0) {$e=0$};
\node[below] at (2,0) {$[0],-2$};
%\node[above left] at (-1,0) {$e=2$};
\end{tikzpicture}
\caption{$\mathbb{A}_n$ graph}
\label{fig:an}
\end{center}
\end{figure}
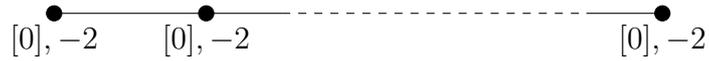

\begin{figure}[ht]
\begin{center}
\begin{tikzpicture}[scale=0.4]

\def\geno{

    \draw[very thick] (2.25+1,1) to [out=10,in=270] (3+1,2.0)
    to [out=90, in=0] (0+1,4.0)
    to [out=180, in=90] (-3-1, 2)
    to [out=270, in=170] (-2.25-1,1) to[out=-10,in=190] (2.25+1,1);}

\def\tubo{
    \fill[white] (-0.75,1.25) -- (-0.5,1)
    to [out=300, in=90] (-0.3,0)
    to [out=270, in = 60] (-0.5,-1)
    -- (-0.75,-1.25)-- (0.75,-1.25) -- (0.5,-1)
    to [out=-240, in=-90] (0.3,0)
    to [out=-270, in = -120] (0.5,1)
    -- (0.75,1.25)-- (-0.75,1.25);

    \draw[ very thick] (-0.75,1.25) -- (-0.5,1)
    to [out=300, in=90] (-0.3,0)
    to [out=270, in = 60] (-0.5,-1)
    -- (-0.75,-1.25);
    \draw[very thick] (0.75,1.25) -- (0.5,1)
    to [out=240, in=90] (0.3,0)
    to [out=270, in = 120] (0.5,-1)
    -- (0.75,-1.25);
    }

\def\curvaroja{
\draw[red] (0.0, 1.6)
to [out=180, in=90] (-0.8, 0.0)
to [out=270, in=180] (0.0, -1.6)
to [out=0, in=270] (0.8, 0.0)
to [out=90, in=0] (0.0, 1.6);
}

\def\curvaazul{
\draw[blue] (1.3, 0.2)
to [out=90, in=270] (1.0, 0.5)
to [out=90, in=0] (0.0, 1.8)
to [out=180, in=90] (-1.0, 0.5)
to [out=270, in=90] (-0.7, 0.2);
\draw[blue, dotted] (-0.7, 0.2)
to [out=270, in=90] (-1.3, -0.2);
\draw[blue]  (-1.3,-0.2)
to [out=270, in=90] (-1.0, -0.5)
to [out=270, in=180] (0.0, -1.8)
to [out=0, in=270] (1.0, -0.5)
to [out=90, in=270] (0.7, -0.2);
\draw[blue, dotted] (0.7, -0.2)
to [out=90, in=270] (1.3, 0.2);
}

\geno
\begin{scope}[yscale=-1]
\geno
\end{scope}
\begin{scope}[xshift=-1cm]
\tubo
\end{scope}
\begin{scope}[xshift=1cm]
\tubo
\end{scope}
\curvaroja
\curvaazul

\begin{scope}[xshift=-10cm]
\geno
\begin{scope}[yscale=-1]
\geno
\end{scope}
\begin{scope}[xshift=-1cm]
\tubo
\end{scope}
\begin{scope}[xshift=1cm]
\tubo
\end{scope}
\curvaroja
\curvaazul
\end{scope}

\node (puntos) at (9cm,0) {$\dots$};

\begin{scope}[xshift=18cm]
\geno
\begin{scope}[yscale=-1]
\geno
\end{scope}
\begin{scope}[xshift=-1cm]
\tubo
\end{scope}
\begin{scope}[xshift=1cm]
\tubo
\end{scope}
\curvaroja
\curvaazul
\end{scope}

\begin{scope}[xshift=-10cm, yshift=-12cm]
\geno
\begin{scope}[yscale=-1]
\geno
\end{scope}
\begin{scope}[xshift=-1cm]
\tubo
\end{scope}
\end{scope}
\begin{scope}[ yshift=-12cm]
\geno
\begin{scope}[yscale=-1]
\geno
\end{scope}
\end{scope}
\begin{scope}[xshift=18cm, yshift=-12cm]
\geno
\begin{scope}[yscale=-1]
\geno
\end{scope}
\begin{scope}[xshift=1cm]
\tubo
\end{scope}
\end{scope}

\begin{scope}[rotate=90,xshift=-10cm, yshift=5cm,yscale=1.3]
\tubo
\end{scope}
\begin{scope}[rotate=90,xshift=-14cm, yshift=5cm,yscale=1.3]
\tubo
\end{scope}
\begin{scope}[rotate=90,xshift=-10cm, yshift=-5cm,yscale=1.3]
\tubo
\end{scope}
\begin{scope}[rotate=90,xshift=-14cm, yshift=-5cm,yscale=1.3]
\tubo
\end{scope}
\begin{scope}[rotate=90,xshift=-10cm, yshift=-13cm,yscale=1.3]
\tubo
\end{scope}
\begin{scope}[rotate=90,xshift=-14cm, yshift=-13cm,yscale=1.3]
\tubo
\end{scope}

\node (puntos) at (9cm,-10) {$\dots$};
\node (puntos) at (9cm,-14) {$\dots$};

\draw[red]
(7,-10.2) --(-9.5,-10.2)
to [out=180, in=90] (-11,-12)
to [out=270, in=180] (-9.5,-13.8)
--(7,-13.8);
\draw[red]
(11,-10.2)  -- (17, -10.2)
to [out=0, in=90] (19,-12)
to [out=270, in=0] (17,-13.8)
--(11,-13.8);

\draw[blue]
(7,-10) -- (-9.5,-10)
to [out=180, in=150] (-10.7,-11.5);
\draw[blue, dotted]
(-10.7, -11.5)
to [out=210, in=30] (-11.3, -12.5);
\draw[blue]
(-11.3, -12.5)
to [out=310, in=180] (-9.8, -14)
-- (-6.5, -14)
to [out=0, in=210] (-5.5,-13.7) ;
\draw[blue, dotted]
(-5.5, -13.7) to [out=330, in=150] (-4.5, -14.3);
\draw[blue]
(-4.5, -14.3)
to [out=30, in=180] (-3.4, -14) -- (3.5, -14)
to [out=0, in= 210] (4.5, -13.7);
\draw[blue,dotted]
(4.5, -13.7) to [out=330, in=150] (5.5, -14.3);
\draw[blue]
(5.5,-14.3) to [out=30, in=180] (6.5, -14) --(7, -14);

\draw[blue]
(11,-14) -- (12, -14)
to[out=0, in=210] (12.5, -13.7);
\draw[blue, dotted]
(12.5, -13.7) to [out=330, in=150] (13.5, -14.3);

\draw[blue] (13.5, -14.3) to [out=30, in=180]
(14, -14) -- (17.2, -14)
to [out=0, in=330] (18.7, -12.3);
\draw[blue, dotted]
(18.7, -12.3) to [out=30, in=210] (19.3, -11.7);
\draw[blue] (19.3, -11.7)
to [out=120, in=0] (17, -10)--(11, -10);

\end{tikzpicture}

\caption{Heegard diagram of the $\mathbb{A}_n$ graph.}
\end{center}

\end{figure}
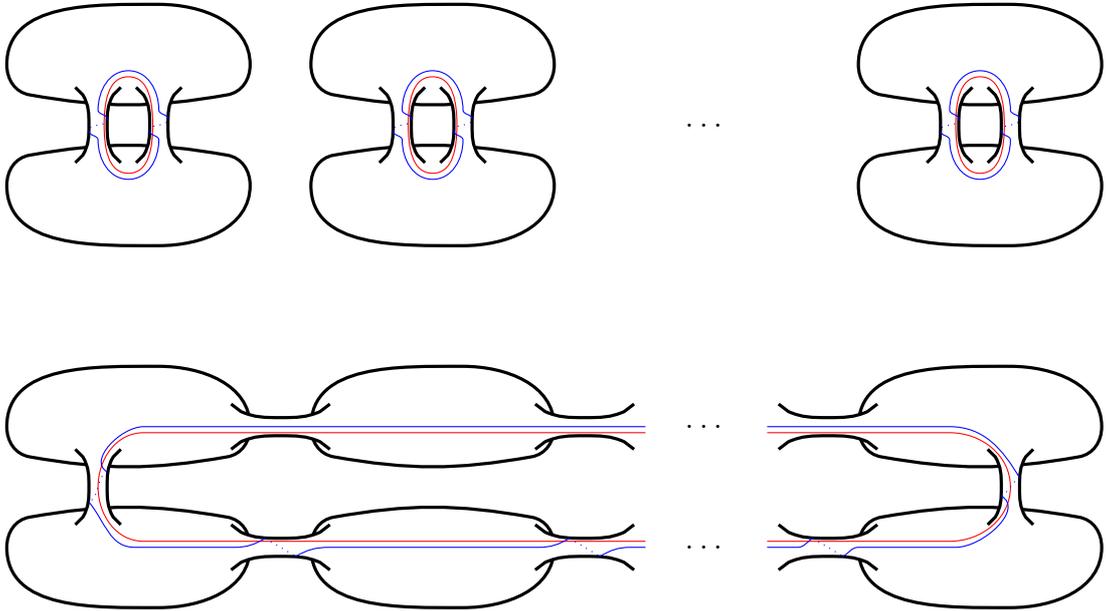

With our method we obtain a genus~$1$ Heegaard splitting where
the two curves intersect~$n$ times.
\end{ejm}

From now we will drop the genus weight if it vanishes.

\begin{ejm}
Let us consider the plumbing manifold associated with a graph
with one vertex and Euler number $-n$, link of a quotient singularity,
i.e. the lens space $L(n,1)$. With our method we obtain a
Heegaard splitting of genus~$n-1$. Using Neumann plumbing calculus
(namely $(n-1)$ $+1$-blow-ups and one $-1$-blow-down), we can transform it in the graph of Figure~\ref{fig:an}, where the weights equal~$2$. The
Heegaard splitting coincides with the one from the previous example,
with a reversed orientation.
\end{ejm}

\begin{ejm}
The plumbing manifold of Figure~\ref{fig:L51} is also a lens space~$L(5,2)$
and it admits a Heegaard splitting of genus~$1$. However, our method
provides a genus-$2$ Heegaard splitting.

 \begin{figure}[ht]
\begin{center}
 \begin{tikzpicture}[scale=2,
vertice/.style={draw,circle,fill,minimum size=0.2cm,inner
sep=0}
]
\draw (-2,0) --(0,0) ;
\node[vertice] at (-2,0) {};
\node[vertice] at (0,0) {};

\node[below] at (-2,0) {$-2$};

\node[below] at (0,0) {$-3$};

\end{tikzpicture}
\caption{A quotient singularity.}
\label{fig:L51}
\end{center}
\end{figure}
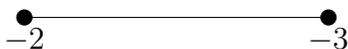

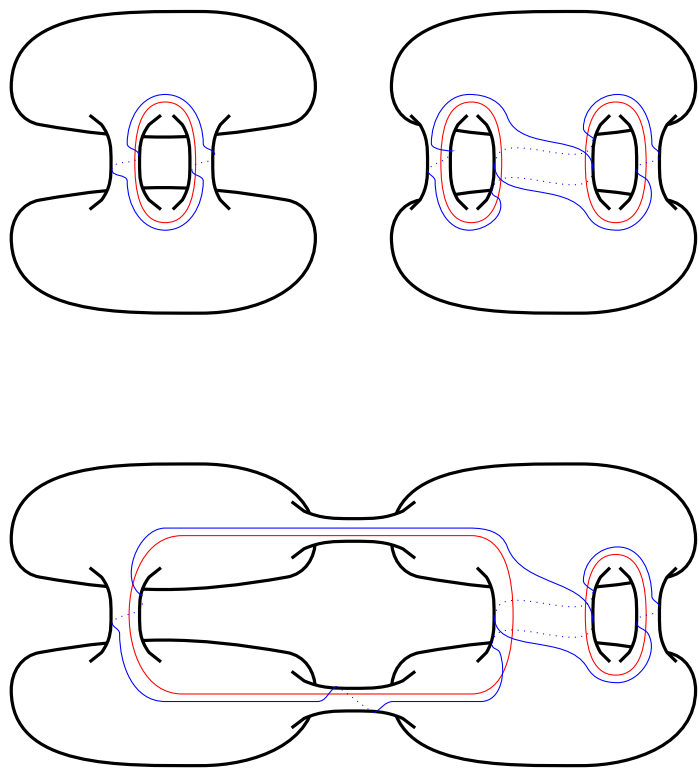
\begin{figure}[ht]
 \begin{center}
  \begin{tikzpicture}[scale=0.5]

\def\geno{

    \draw[very thick] (2.25+1,1) to [out=10,in=270] (3+1,2.0)
    to [out=90, in=0] (0+1,4.0)
    to [out=180, in=90] (-3-1, 2)
    to [out=270, in=170] (-2.25-1,1) to[out=-10,in=190] (2.25+1,1);}

\def\tubo{
    \fill[white] (-0.75,1.25) -- (-0.5,1)
    to [out=300, in=90] (-0.3,0)
    to [out=270, in = 60] (-0.5,-1)
    -- (-0.75,-1.25)-- (0.75,-1.25) -- (0.5,-1)
    to [out=-240, in=-90] (0.3,0)
    to [out=-270, in = -120] (0.5,1)
    -- (0.75,1.25)-- (-0.75,1.25);

    \draw[ very thick] (-0.75,1.25) -- (-0.5,1)
    to [out=300, in=90] (-0.3,0)
    to [out=270, in = 60] (-0.5,-1)
    -- (-0.75,-1.25);
    \draw[very thick] (0.75,1.25) -- (0.5,1)
    to [out=240, in=90] (0.3,0)
    to [out=270, in = 120] (0.5,-1)
    -- (0.75,-1.25);
    }

\def\tubogordo{
    \fill[white] (-1.75,1.25) -- (-1.5,1)
    to [out=300, in=90] (-1.3,0)
    to [out=270, in = 60] (-1.5,-1)
    -- (-1.75,-1.25)-- (1.75,-1.25) -- (1.5,-1)
    to [out=-240, in=-90] (1.3,0)
    to [out=-270, in = -120] (1.5,1)
    -- (1.75,1.25)-- (-1.75,1.25);

    \draw[ very thick] (-1.75,1.25) -- (-1.5,1)
    to [out=300, in=90] (-1.3,0)
    to [out=270, in = 60] (-1.5,-1)
    -- (-1.75,-1.25);
    \draw[very thick] (1.75,1.25) -- (1.5,1)
    to [out=240, in=90] (1.3,0)
    to [out=270, in = 120] (1.5,-1)
    -- (1.75,-1.25);
    }

\geno

\begin{scope}[yscale=-1]
\geno

\end{scope}

\tubogordo

\begin{scope}[xshift=-2.75cm]
\tubo
\end{scope}

\begin{scope}[xshift=2.75cm]
\tubo
\end{scope}

\begin{scope}[xshift=-10cm]
\geno

\begin{scope}[yscale=-1]
\geno
\end{scope}

\begin{scope}[xshift=-1cm,xscale=1.25]
\tubo

\end{scope}

\begin{scope}[xshift=1cm]
\tubo

\end{scope}
\end{scope}

\def\curvaroja{
\draw[red] (0.0, 1.6)
to [out=180, in=90] (-0.8, 0.0)
to [out=270, in=180] (0.0, -1.6)
to [out=0, in=270] (0.8, 0.0)
to [out=90, in=0] (0.0, 1.6);
}
\begin{scope}[xshift=-1.9cm]
\curvaroja
\end{scope}

\begin{scope}[xshift=1.9cm]
\curvaroja
\end{scope}

\begin{scope}[xshift=-9.95cm]
\curvaroja
\end{scope}

\begin{scope}[xshift=-9.95cm]
\draw[blue] (1.3, 0.2)
to [out=90, in=270] (1.0, 0.5)
to [out=90, in=0] (0.0, 1.8)
to [out=180, in=90] (-1.0, 0.5)
to [out=270, in=90] (-0.7, 0.2);
\draw[blue, dotted] (-0.7, 0.2)
to [out=270, in=90] (-1.4, -0.2);
\draw[blue]  (-1.4,-0.2)
to [out=270, in=90] (-1.0, -0.5)
to [out=270, in=180] (0.0, -1.8)
to [out=0, in=270] (1.0, -0.5)
to [out=90, in=270] (0.7, -0.2);
\draw[blue, dotted] (0.7, -0.2)
to [out=90, in=270] (1.3, 0.2);
\end{scope}

\begin{scope}[xshift=-1.95cm]
\draw[blue] (3.25, -0.2)
to [out=90, in=290] (1., 1.2)
to [out=110, in=0] (0.0, 1.8)
to [out=180, in=90] (-1.0, 0.5)
to [out=270, in=0] (-0.5, 0.3);
\draw[blue, dotted] (3.25, -0.2)
to [out=270, in=90] (0.6, -0.8);
\draw[blue]  (-1.1,-0.2)
to [out=270, in=90] (-0.9, -0.5)
to [out=270, in=180] (0.0, -1.8)
to [out=0, in=290] (0.8, -1.0)
to [out=110, in=270] (0.6, -0.8);
\draw[blue, dotted] (-0.5, 0.3)
to [out=270, in=90] (-1.1, -0.3);
\end{scope}

\begin{scope}[xshift=1.95cm]
\draw[blue] (1.1, 0.2)
to [out=90, in=270] (0.9, 0.5)
to [out=90, in=0] (0.0, 1.8)
to [out=180, in=90] (-0.9, 0.9)
to [out=270, in=0] (-0.65, 0.6);
\draw[blue, dotted] (-0.65, 0.6)
to [out=270, in=90] (-3.2, -0.0);
\draw[blue]  (-3.25,-0.0)
to [out=270, in=120] (-0.8, -1.4)
to [out=300, in=180] (0.0, -1.8)
to [out=0, in=300] (0.8, -0.5)
to [out=120, in=270] (0.5, -0.2);
\draw[blue, dotted] (0.5, -0.2)
to [out=90, in=270] (1.1, 0.2);
\end{scope}

\begin{scope}[yshift=-12cm]
\geno

\begin{scope}[yscale=-1]
\geno

\end{scope}

\tubogordo

\begin{scope}[xshift=2.75cm]
\tubo
\end{scope}

\begin{scope}[xshift=-10cm]
\geno

\begin{scope}[yscale=-1]
\geno
\end{scope}

\begin{scope}[xshift=-1cm,xscale=1.25]
\tubo

\end{scope}

\end{scope}

\begin{scope}[rotate=90,xshift=2.25cm,yshift=5cm,yscale=1.3]
\tubo
\end{scope}

\begin{scope}[rotate=90,xshift=-2.25cm,yshift=5cm,yscale=1.3]
\tubo
\end{scope}

\def\curvaroja{
\draw[red] (0.0, 1.6)
to [out=180, in=90] (-0.8, 0.0)
to [out=270, in=180] (0.0, -1.6)
to [out=0, in=270] (0.8, 0.0)
to [out=90, in=0] (0.0, 1.6);
}

\draw[red] (-1.9cm, 2.1) --(-9.5cm,2.1)
to [out=180, in=90] (-10.9, 0.0)
to [out=270, in=180] (-9.5, -2.1)
-- (-1.9cm, -2.1)
to [out=0, in=270] (-0.8, 0.0)
to [out=90, in=0] (-1.9,2.1);

\begin{scope}[xshift=1.9cm]
\curvaroja
\end{scope}

\begin{scope}[xshift=-1.95cm]
\draw[blue] (3.25, -0.2)
to [out=90, in=290] (1., 1.8)
to [out=110, in=0] (0.0, 2.3)--(-8,2.3)
to [out=180, in=150] (-8.6, 0.5);
\draw[blue, dotted] (3.25, -0.2)
to [out=270, in=90] (0.6, -0.8);
\draw[blue]  (-9.4,-0.2)
to [out=270, in=90] (-9.2, -0.5)
to [out=270, in=180] (-8, -2.3)--(-4,-2.3)
to [out=0, in=210] (-3.5,-1.9);
\draw[blue,dotted] (-3.5,-1.9)
to [out=330,in=150] (-2.5, -2.6);
\draw[blue] (-2.5, -2.6)
to [out=30, in=180] (-2, -2.3) --(0.3,-2.3)
to [out=0, in=290] (0.8, -1.0)
to [out=110, in=270] (0.6, -0.8);
\draw[blue, dotted] (-8.6, 0.3)
to [out=270, in=90] (-9.4, -0.3);
\end{scope}

\begin{scope}[xshift=1.95cm]
\draw[blue] (1.1, 0.2)
to [out=90, in=270] (0.9, 0.5)
to [out=90, in=0] (0.0, 1.8)
to [out=180, in=90] (-0.9, 0.9)
to [out=270, in=0] (-0.65, 0.6);
\draw[blue, dotted] (-0.65, 0.6)
to [out=270, in=90] (-3.2, -0.0);
\draw[blue]  (-3.25,-0.0)
to [out=270, in=120] (-0.8, -1.4)
to [out=300, in=180] (0.0, -1.8)
to [out=0, in=300] (0.8, -0.5)
to [out=120, in=270] (0.5, -0.2);
\draw[blue, dotted] (0.5, -0.2)
to [out=90, in=270] (1.1, 0.2);
\end{scope}
\end{scope}

\end{tikzpicture}
\caption{Heegaard diagram of the quotient singularity.}
 \end{center}

\end{figure}

\end{ejm}

\begin{ejm}
The link of the singularity defined by $z^2+x^3+y^5=0$ ($\mathbb{E}_8$-singularity) is the Poincar{\'e} sphere. Our method provides
a Heegaard splitting of genus~$3$, where the central vertex
needs four drills (three negative ones).

 \begin{figure}[ht]
\begin{center}
\begin{tikzpicture}[scale=2,
vertice/.style={draw,circle,fill,minimum size=0.2cm,inner
sep=0}
]
\draw (-3,0) --(3,0) ;

\foreach \a in {-3,-2,...,3}
{
\node[vertice] at (\a,0) {};
\node[above] at (\a,0) {$-2$};
}

\draw (-1,0) --(-1,-1) ;
\node[vertice] at (-1,-1) {};

\node[right] at (-1,-1) {$-2$};

\end{tikzpicture}
% \caption{$\mathbb{E}_8$-singularity.}

\end{center}
% \end{figure}
%
% \begin{figure}
 \begin{center}
  \begin{tikzpicture}[scale=0.5]

\def\geno{

    \draw[very thick] (2.25+1,1) to [out=10,in=270] (3+1,2.0)
    to [out=90, in=0] (0+1,4.0)
    to [out=180, in=90] (-3-1, 2)
    to [out=270, in=170] (-2.25-1,1) to[out=-10,in=190] (2.25+1,1);}
\def\asa{
    \fill[white]
    (1.5,3.0)
    to [out=150, in=270] (1.0, 4.0)
    to [out=90, in=0.0] (0.0, 5.0)
    to [out=180, in=90] (-1.0,4.0)
    to [out=270, in=30] (-1.5,3.0)--
    (-0.25,3.0)
    to [out=150, in=270] (-0.5,4.0)
    to [out=90, in=180] (0,4.5)
    to [out=0, in=90] (0.5,4.0)
    to [out=270, in=30] (0.25, 3.0);

    \draw[very thick] (1.5,3.0)
    to [out=150, in=270] (1.0, 4.0)
    to [out=90, in=0.0] (0.0, 5.0)
    to [out=180, in=90] (-1.0,4.0)
    to [out=270, in=30] (-1.5,3.0);

    \draw[very thick] (0.25,3.0)
    to [out=30, in=270] (0.5,4.0)
    to [out=90, in=0] (0,4.5)
    to [out=180, in=90] (-0.5,4.0)
    to [out=270, in=150] (-0.25, 3.0);
    }

\def\tubo{
    \fill[white] (-0.75,1.25) -- (-0.5,1)
    to [out=300, in=90] (-0.3,0)
    to [out=270, in = 60] (-0.5,-1)
    -- (-0.75,-1.25)-- (0.75,-1.25) -- (0.5,-1)
    to [out=-240, in=-90] (0.3,0)
    to [out=-270, in = -120] (0.5,1)
    -- (0.75,1.25)-- (-0.75,1.25);

    \draw[ very thick] (-0.75,1.25) -- (-0.5,1)
    to [out=300, in=90] (-0.3,0)
    to [out=270, in = 60] (-0.5,-1)
    -- (-0.75,-1.25);
    \draw[very thick] (0.75,1.25) -- (0.5,1)
    to [out=240, in=90] (0.3,0)
    to [out=270, in = 120] (0.5,-1)
    -- (0.75,-1.25);
    }

\begin{scope}[xscale=1.5]

	\geno

	\begin{scope}[yscale=-1,yshift=1cm]
	\geno
	\end{scope}
\end{scope}
	\begin{scope}[xshift=-3cm,yscale=1.7,yshift=-0.3cm]
	\tubo
	\end{scope}

	\begin{scope}[xshift=3cm,yscale=1.7,yshift=-0.3cm]
	\tubo
	\end{scope}
\begin{scope}[yscale=1.7,xscale=2,yshift=-0.3cm]
\tubo
\end{scope}

\draw[red]
(-0.2,0) to[out=90,in=0] (-1.75,2)
to[out=180,in=90](-3,0)
to[out=270,in=180] (-1.75,-3)
to [out=0,in=270] (-0.2,0);

\draw[red]
(0.2,0) to[out=90,in=180] (1.75,2)
to[out=0,in=90](3,0)
to[out=270,in=0] (1.75,-3)
to [out=180,in=270] (0.2,0);

\draw[blue] (0.7,-1.5)
to [out=270,in=90] (0.4,-2.5)
to[out=270,in=180] (2,-3.2)
to[out=0,in=330] (2.7,-1.6);
\draw[blue,dotted] (2.7,-1.6)
to[out=30,in=210] (3.3,-1.4);
\draw[blue] (3.3,-1.4)
to[out=150,in=-30] (2.7,-1.2);
\draw[blue,dotted] (2.7,-1.2)
to[out=30,in=210] (3.3,-1);
\draw[blue] (3.3,-1)
to[out=150,in=-30] (2.7,-0.8);
\draw[blue,dotted] (2.7,-0.8)
to[out=30,in=210] (3.3,-0.6);
\draw[blue] (3.3,-0.6)
to[out=150,in=-30] (2.7,-0.4);
\draw[blue,dotted] (2.7,-0.4)
to[out=30,in=210] (3.3,-0.2);
\draw[blue] (3.3,-0.2)
to[out=150,in=-30] (2.7,0);
\draw[blue,dotted] (2.7,0)
to[out=30,in=210] (3.3,0.2);
\draw[blue] (3.3,0.2)
to[out=140,in=270] (3.1,1)
to[out=90,in=0] (1.75,2.2)
to[out=180,in=90] (0.1,0.7)
to[out=270,in=30] (-0.6,0);
\draw[blue,dotted] (-0.6,0)
to[out=-30,in=150] (0.6,-0.5);
\draw[blue] (0.6,-0.5)
to[out=-30,in=150] (-0.55,-1);
\draw[blue,dotted] (-0.55,-1)
to[out=-30,in=150] (0.7,-1.5);

\draw[blue](0.7,-1.3)
to[out=240,in=0] (-2,-3.2)
to[out=180,in=300] (-3.3,-1.5);
\draw[blue,dotted] (-3.3,-1.5)
to[out=60,in=240] (-2.7,-1.2);
\draw[blue] (-2.7,-1.2)
to[out=120,in=330] (-3.3,-0.9);
\draw[blue,dotted] (-3.3,-0.9)
to[out=30,in=210] (-2.7,-0.6);
\draw[blue] (-2.7,-0.6)
to[out=120,in=330] (-3.3,-0.3);
\draw[blue,dotted] (-3.3,-0.3)
to[out=30,in=210] (-2.7,0);
\draw[blue] (-2.7,0)
to[out=150,in=270] (-3.1,0.7)
to[out=90,in=180] (-2,2.2)
to[out=0,in=90] (-0.1,1)
to[out=270,in=30] (-0.6,0.2);
\draw[blue,dotted] (-0.6,0.2)
to[out=-30,in=150] (0.6,-0.3);
\draw[blue] (0.6,-0.3)
to[out=-30,in=150] (-0.55,-0.8);
\draw[blue,dotted] (-0.55,-0.8)
to[out=-30,in=150] (0.7,-1.3);

\end{tikzpicture}
 \end{center}
\caption{Heegaard diagram of the $\mathbb{E}_8$-singularity.}
\label{fig:E8}
\label{fig:hegdiagE8}
\end{figure}
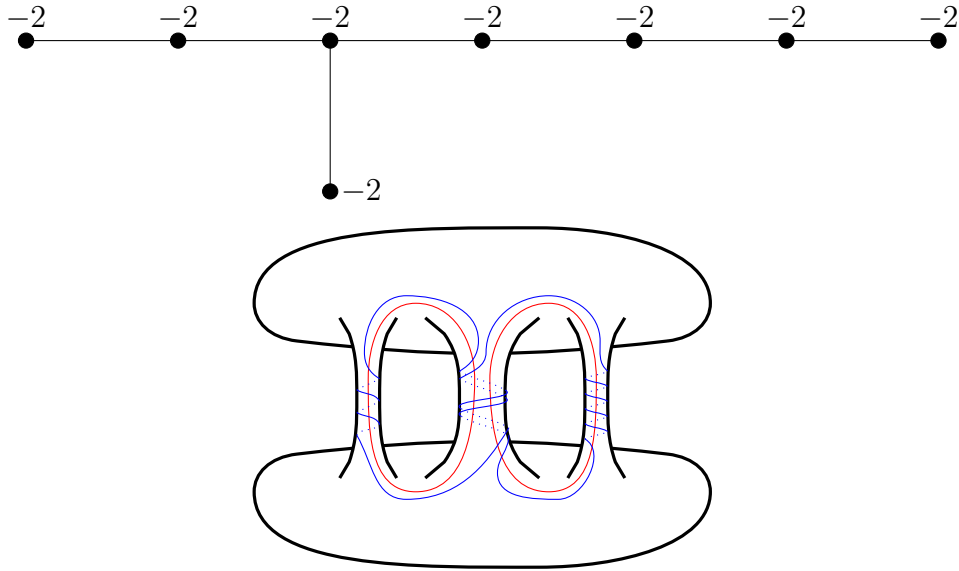

It is possible to make a simpler Heegaard splitting. Using $+1$-blow-ups of~\cite{neu:81}
(and one $-1$-blow-down),
we can modify the Euler numbers:  $2$ for the lower vertex and ~$-1$
in the central vertex. In that case, using the procedure in Remark~\ref{obs:handle-vv}, we
can make a float gluing along the main cylinder, obtaining a Heegaard splitting of
genus~$2$.
\end{ejm}

\begin{ejm}
The graph manifold of Figure~\ref{fig:nonSeifert}
is also the link of a normal surface singularity (which cannot
be quasihomogeneous) and admits a Heegaard splitting of
genus~$5$.

 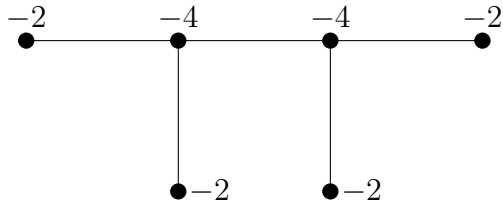
\begin{figure}[ht]
\begin{center}
\begin{tikzpicture}[scale=2,
vertice/.style={draw,circle,fill,minimum size=0.2cm,inner
sep=0}
]
\draw (-1,0) --(2,0) ;

\foreach \a in {-1,0,1,2}
{
\node[vertice] at (\a,0) {};
}

\node[above] at (-1,0) {$-2$};
\node[above] at (0,0) {$-4$};
\node[above] at (2,0) {$-2$};
\node[above] at (1,0) {$-4$};
\draw (0,0) --(0,-1) ;
\node[vertice] at (0,-1) {};
\node[right] at (0,-1) {$-2$};
\draw (1,0) --(1,-1) ;
\node[vertice] at (1,-1) {};
\node[right] at (1,-1) {$-2$};

\end{tikzpicture}
\caption{Non-Seifert manifold.}
\label{fig:nonSeifert}
\end{center}
\end{figure}
\end{ejm}

% \bibliographystyle{amsplain}
% \bibliography{biblio_ea}

\def\cprime{$'$}
\providecommand{\bysame}{\leavevmode\hbox to3em{\hrulefill}\thinspace}
\providecommand{\MR}{\relax\ifhmode\unskip\space\fi MR }
% \MRhref is called by the amsart/book/proc definition of \MR.
\providecommand{\MRhref}[2]{%
  \href{http://www.ams.org/mathscinet-getitem?mr=#1}{#2}
}
\providecommand{\href}[2]{#2}

\end{document}